% Template.tex
% This is a template for making up an AMS-LaTeX file

\documentclass{amsart}
\usepackage{amssymb}
\usepackage{graphicx}
\usepackage{epstopdf}
\usepackage{pdfsync}

\newcommand{\hs}{\hspace{0.15cm}}

% The Theorem environments

% The following commands set it up so that:

% All Theorems, Corollaries, Lemmas, Propositions, Definitions, Remarks, Examples, Notations, and Terminologies will be numbered in a single sequence, and the numbering will be within each section. Displayed equations will be numbered in the same sequence.

% Theorem:
% \begin{thm}
%
% \end{thm}

% Corollary:
% \begin{cor}
%
% \end{cor}

% Lemma:
% \begin{lem}
%
% \end{lem}

% Claim:
% \begin{claim}
%
% \end{claim}

% Proposition:
% \begin{prop}
%
% \end{prop}

% Definition:
% \begin{defn}
%
% \end{defn}

% Remark:
% \begin{rem}
%
% \end{rem}

% Example:
% \begin{ex}
%
% \end{ex}

% Notation:
% \begin{notation}
%
% \end{notation}

% Terminology:
% \begin{terminology}
%
% \end{terminology}

% Theorem environments

% The following causes equations to be numbered within sections.
\numberwithin{equation}{section}

% If the equation counter is used for all of the theorem environments, then everything will be numbered in the same sequence.

% Theorem Styles

\theoremstyle{plain} % This is the default.
\newtheorem{thm}[equation]{Theorem}

\newtheorem{lem}[equation]{Lemma}

\newtheorem{claim}[equation]{Claim}

\theoremstyle{definition}
\newtheorem{defn}[equation]{Definition}

\theoremstyle{remark}
\newtheorem{rem}[equation]{Remark}

%-------------------Berge New Commands-------------------
\setcounter{tocdepth}{1}

\graphicspath{{PSF_figures/}}
\DeclareGraphicsRule{.tif}{png}{.png}{`convert #1 `dirname #1`/`basename #1 .tif`.png}

\title[Primitive, proper power, and Seifert curves] {Primitive, proper power, and Seifert curves in the boundary of a genus two handlebody}

\author{Sungmo Kang}

\email{skang4450@chonnam.ac.kr}
%\date{\today}
%\date{}                                               % Activate to display a given date or no date

\begin{document}

%    Information for first author

%    \thanks will become a 1st page footnote.
%	\thanks{}

%    General info
%\subjclass[2000]{Primary 57M50}

%\date{}

%\dedicatory{}

%\keywords{Dehn surgery, lens space, double-primitive, 3-sphere}

\begin{abstract}
A simple closed curve $\alpha$ in the boundary of a genus two handlebody $H$ is \textit{primitive} if adding a 2-handle to $H$ along $\alpha$
yields a solid torus. If adding a 2-handle to $H$ along $\alpha$ yields a Seifert-fibered space and not a solid torus, the curve is called Seifert.
If $\alpha$ is disjoint from an essential separating disk in $H$, does not bound a disk in $H$, and is not primitive in $H$, then it is said to be \textit{proper power}.

As one of the background papers of the classification project of hyperbolic primitive/Seifert knots in $S^3$ whose complete list is given in \cite{BK20},
this paper classifies in terms of R-R diagrams primitive, proper power, and Seifert curves. In other words, we provide up to equivalence all possible R-R diagrams
of such curves. Furthermore, we further classify all possible R-R diagrams of proper power curves with respect to an arbitrary complete set of cutting disks of a genus two handlebody.
\end{abstract}

\maketitle

%\tableofcontents

\section{Introduction and main results}\label{Introduction and main result}

In this paper, we provide the classifications of three types of simple closed curves lying in the boundary of a genus two handlebody: primitive, proper power, and Seifert curves. These classifications will be used in the classification project of hyperbolic primitive/Seifert knots in $S^3$ whose complete list is given in \cite{BK20}.

Primitive/Seifert(or simply P/SF) knots, which were introduced in \cite{D03}, are a natural generalization of primtive/primitive(or simply P/P) knots defined by Berge in \cite{B90} or an available version \cite{B18}. Both P/P knots and P/SF knots are represented by simple closed curves lying in a genus two Heegaard surface of $S^3$ bounding two handlebodies such that 2-handle additions to the handlebodies along the curves are solid tori for P/P knots and one 2-handle addition is a solid torus and the other is a Seifert-fibered space and not a solid torus for P/SF knots. One component of the intersection of a regular neighborhood of a knot and the Heegaard surface defines a so-called surface-slope.

Berge constructed twelve families of P/P knots which are referred to as the Berge knots. The Berge knots admit lens space Dehn surgeries at surface-slopes. The Berge conjecture, which is still unsolved, says that if a knot in $S^3$ admits a lens space Dehn surgery, then it is a Berge knot and the surgery is the corresponding surface-slope surgery. Therefore the conjecture implies the complete classification of knots admitting lens space Dehn surgeries. Toward the Berge conjecture, it is proved in \cite{B08} or independently in \cite{G13} that all P/P knots are the Berge knots. This implies that the Berge knots are the complete list of P/P knots.

Meanwhile, P/SF knot are also of interest, because P/SF knots admit Seifert-fibered Dehn surgeries at surface-surface slopes and knots with Dehn surgeries yielding Seifert-fibered spaces are not well understood. The classification project of hyperbolic primitive/Seifert knots in $S^3$ has been carried out for years and has recently been completed. The complete list of hyperbolic primitive/Seifert knots in $S^3$ is given in \cite{BK20} where the surface-slope of the exceptional surgery on each P/SF knot that yields a Seifert-fibered space and the indexes of each exceptional fiber in the resulting Seifert-fibered space are also provided.

Now we describe the results of this paper, which is the classifications of primitive, proper power, and Seifert curves in the boundary of a genus two handlebody. The definitions of such curves are as follows.

\begin{defn}
Let $H$ be a genus two handlebody, $\alpha$ an essential simple closed
curve in $\partial H$, and $H[\alpha]$ the 3-manifold obtained by
adding a 2-handle to $H$ along $\alpha$.
\begin{enumerate}
\item $\alpha$ is said to be \textit{primitive} if $H[\alpha]$ is a solid torus.
\item $\alpha$ is said to be \textit{proper power} if $\alpha$ is disjoint from an essential separating disk in $H$, does not bound a disk in $H$, and is not primitive in $H$.
\item $\alpha$ is said to be \textit{Seifert} if $H[\alpha]$ is a Seifert-fibered space and not a solid torus.
\end{enumerate}
\end{defn}

There are subtypes of Seifert curves in $H$. Since $H$ is a genus two handlebody, that $\alpha$ is Seifert in $H$ implies that
$H[\alpha]$ is an orientable Seifert-fibered space over $D^2$
with two exceptional fibers, or an orientable Seifert-fibered space
over the M\"{o}bius band with at most one exceptional fiber. Therefore, we further
divide Seifert curves into two subtypes. If
$H[\alpha]$ is Seifert-fibered over $D^2$, we say that
$\alpha$ is \textit{Seifert-d}. If $H[\alpha]$ is Seifert-fibered over the M\"{o}bius band, we say that
$\alpha$ is \textit{Seifert-m}.

The following theorems present the classifications of primitive, proper power, and Seifert curves. They are described in terms of R-R diagrams. For the definition and properties of R-R diagrams, see \cite{K20}.

\begin{thm}\label{main theorem}
Suppose $\alpha$ is a simple closed curve in the boundary of a genus two handlebody $H$.
\begin{enumerate}
\item If $\alpha$ is a primitive curve, then $\alpha$ has an R-R diagram of the form shown in Figure~\emph{\ref{PSFFig1aa1}}.
\item If $\alpha$ is a Seifert-d curve, then $\alpha$ has an R-R diagram of the form shown in Figure~\emph{\ref{PSFFig2a1}} with $n, s > 1$, $a, b > 0$, and $\gcd(a,b) = 1$.
\item If $\alpha$ is a Seifert-m curve, then $\alpha$ has an R-R diagram of the form shown in Figure~\emph{\ref{SF_on_Mobius11}} with $s>1$.
\item If $\alpha$ is a proper power curve, then $\alpha$ has an R-R diagram of the form shown in Figure~\emph{\ref{PPower8}} with $s>1$.
\end{enumerate}

Regarding Seifert curves, if $\alpha$ has an R-R diagram of the form shown in Figure~\emph{\ref{PSFFig2a1}}\emph{a} \emph{(}\emph{b}, respectively\emph{)}, then $H[\alpha]$ is a Seifert-fibered space over $D^2$ with two exceptional fibers of indexes $n$ and $s$ \emph{(}$n(a+b)+b$ and $s$, respectively\emph{)}. If $\alpha$ has an R-R diagram of the form shown in Figure~\emph{\ref{SF_on_Mobius11}}, then $H[\alpha]$ is a Seifert-fibered space over the M\"{o}bius band with one exceptional fiber of index $s$.
\end{thm}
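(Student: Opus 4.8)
The plan is to treat the four cases separately; in each one the idea is to feed the topological hypothesis on $H[\alpha]$ into the R-R diagram machinery of \cite{K20}, using it first to pin down the cyclic word carried by $\alpha$ and then to pin down the embedded position of $\alpha$ on $\partial H$. The primitive case is essentially formal: $H[\alpha]$ is a solid torus precisely when $[\alpha]$ is a primitive element of $\pi_1(H)\cong F_2$, and by the classical description of primitive curves (as in \cite{B90}) such an $\alpha$ is, after a homeomorphism of $H$, the standard one, i.e.\ there is a complete cutting system $\{D_1,D_2\}$ with $|\alpha\cap D_1|=1$ and $\alpha\cap D_2=\emptyset$; rewriting this configuration in the normal form of \cite{K20} gives Figure~\ref{PSFFig1aa1}. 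The proper power case is also short. By definition $\alpha$ misses an essential separating disk $D$, so, writing the two sides of $D$ as solid tori $T_1$ and $T_2$, we may assume $\alpha\subset\partial T_1$; since $\alpha$ bounds no disk it is not the meridian of $T_1$, and since $\alpha$ is not primitive it is not a longitude of $T_1$, so $\alpha$ is an $(s,q)$-curve on $\partial T_1$ with $s>1$ and $\gcd(s,q)=1$, hence $[\alpha]=x^{s}$ for a core $x$ of $T_1$. Taking $D_1$ a meridian of $T_1$ and $D_2$ a meridian of $T_2$, the curve $\alpha$ misses $D_2$ and meets $D_1$ coherently in $s$ points, and reading off the R-R diagram gives the form of Figure~\ref{PPower8} with $s>1$.

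The substantial work is the Seifert case, which I would organize along the two possibilities for the base orbifold recorded just before the theorem. In either possibility $H[\alpha]$ contains an essential annulus: a vertical annulus joining the two exceptional fibers, or joining an exceptional fiber to $\partial H[\alpha]$, and in the M\"{o}bius-band case a vertical annulus over the core arc (doubling an essential M\"{o}bius band). Put such an annulus $A$ in $H[\alpha]$ transverse to the co-core of the $2$-handle; then $P:=A\cap H$ is an essential planar surface in the genus two handlebody $H$ whose boundary consists of several parallel copies of $\alpha$ together with some curves on $\partial H$. Essential planar surfaces in a handlebody with boundary of this controlled type are highly constrained --- this is exactly the kind of ``wave'' configuration that the R-R diagram formalism of \cite{K20} is built to analyze --- and I expect that tracing the constraint through the R-R dictionary forces the cyclic word of $\alpha$ to be conjugate to $x^{n}y^{-s}$ in the Seifert-d case and to the analogous non-orientable (Klein-bottle-type) word in the Seifert-m case, and then forces the embedded position of $\alpha$ on $\partial H$ to be one of the forms of Figures~\ref{PSFFig2a1} and \ref{SF_on_Mobius11}. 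The two sub-forms of Figure~\ref{PSFFig2a1} record the two ways the boundary torus of $H[\alpha]$ can sit relative to the Seifert fibration, and the hypotheses $n,s>1$, $a,b>0$, $\gcd(a,b)=1$ are precisely what make $\alpha$ a single essential simple closed curve with reduced R-R diagram. (An alternative to the annulus argument, valid once the parameters are large enough, is to invoke the classification of Heegaard splittings of Seifert fibered spaces to identify the genus two splitting $H[\alpha]=H\cup_{\partial H}C$ directly with a vertical one, then treat the finitely many small cases by hand.)

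Finally, the ``if'' direction of the Seifert statements, together with the exceptional-fiber indices, I would establish by a direct computation on each standard diagram: the diagrams of Figures~\ref{PSFFig2a1} and \ref{SF_on_Mobius11} manifestly exhibit a Seifert fibration of $H[\alpha]$ (neighborhoods of the two handle cores are fibered solid tori and the complement is a pair of pants times $S^1$, or its M\"{o}bius-band analogue), from which the indices $n$ and $s$, respectively $n(a+b)+b$ and $s$, respectively $s$, are read off from the slope data, with the one-relator presentation $\langle x,y\mid w\rangle$ of $\pi_1(H[\alpha])$ serving as a consistency check. The main obstacle I anticipate is the ``only if'' part of the Seifert-d case --- proving that no simple closed curve outside the family of Figure~\ref{PSFFig2a1} can make $H[\alpha]$ Seifert fibered over the disk. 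This means pushing the essential-surface and R-R diagram analysis far enough to eliminate every exotic configuration (or, in the Heegaard-splitting approach, pinning down exactly when the genus two splitting of a small Seifert fibered space fails to be unique), and keeping careful track of where the two sub-forms $(a)$ and $(b)$ and the sharp gcd and positivity constraints arise is the most delicate part of the bookkeeping.
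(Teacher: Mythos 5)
Your treatments of the primitive and proper power cases match the paper's: primitivity gives a cutting disk meeting $\alpha$ transversely in one point, and the essential separating disk cuts $H$ into solid tori on one of whose boundaries $\alpha$ is an $s$-fold power of the core, which is exactly how Figures~\ref{PSFFig1aa1} and \ref{PPower8} arise. The gap is in the Seifert cases, and it sits precisely where you yourself flag the difficulty. For Seifert-d, your primary route --- intersecting a vertical annulus of $H[\alpha]$ with the co-core of the $2$-handle to get an essential planar surface $P\subset H$ with boundary components parallel to $\alpha$, and then ``tracing the constraint through the R-R dictionary'' --- is not carried out, and it is not a routine step: $P$ can have arbitrarily many boundary components on $\alpha$, and nothing in the R-R formalism as you have invoked it converts such a surface into the cyclic word of $\alpha$. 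The paper instead takes the alternative you mention only parenthetically: it quotes the Boileau--Rost--Zieschang classification \cite{BRZ88} of the genus two Heegaard decompositions $HD_0$, $HD_S$, $HD_T$ of $S(\nu/p,\omega/q)$, translates each of the three associated Heegaard diagrams into an R-R diagram, and then normalizes so that $n>1$ by a change of cutting disks. Without that input (or an equally strong substitute) the ``only if'' direction of part (2) is unproved. Note also that the cyclic word is conjugate to $A^nB^{-s}$ only for Figure~\ref{PSFFig2a1}a; Figure~\ref{PSFFig2a1}b carries the words $W_{p,\nu}(u^{-1},t^q)$ coming from $HD_S$ and $HD_T$, so the two sub-forms are not just two embeddings of the same word, contrary to what your sketch suggests.

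The Seifert-m case has a parallel gap. The paper does not cut an annulus of $H[\alpha]$ along the $2$-handle; it invokes Eudave-Mu\~noz \cite{EM94} to produce a nonseparating essential annulus already embedded in $H$ and disjoint from $\alpha$ (and Hatcher's lemmas to see it is vertical), classifies the R-R diagrams of the boundary of such an annulus, adds $\alpha$ disjointly to that diagram, and then --- the step your sketch has no replacement for --- uses Zieschang's theorem \cite{Z77} that $\langle x,y\mid x^2y^2\rangle$ has a single Nielsen equivalence class of generating pairs to force $(a,b)=(0,1)$ and hence $\alpha=AB^sA^{-1}B^s$. These two external results are the actual content of the classification direction, and your proposal supplies neither them nor a worked alternative. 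Your converse direction and index computations, by contrast, are in the spirit of the paper's: it cuts $H$ along the separating annulus bounded by two parallel copies of the regular fiber $\beta$, observes that $\alpha$ is primitive in the resulting genus two piece, and reads the indexes $n$, $n(a+b)+b$, and $s$ off $H_1$ by the dot-product lemma.
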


\begin{figure}[tbp]
\includegraphics{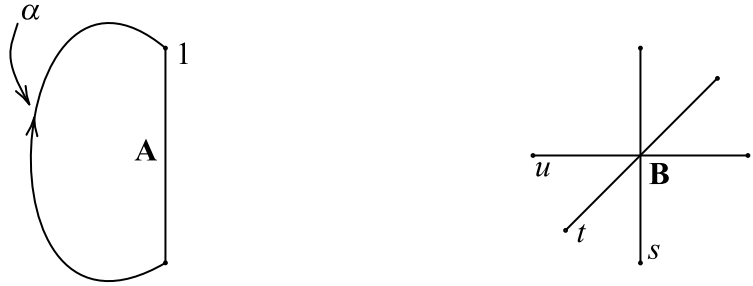}\caption{If $\alpha$ is a primitive curve in the boundary of a genus two handlebody $H$, then $\alpha$ has an R-R diagram with the form of this figure.}\label{PSFFig1aa1}
\end{figure}

\begin{figure}[tbp]
\centering
\includegraphics[width = 1.0\textwidth]{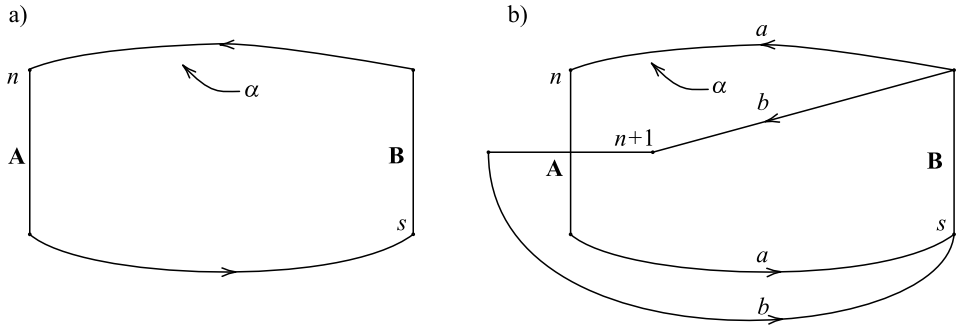}
\caption{If $\alpha$ is a Seifert-d curve in the boundary of a genus two handlebody $H$, then $\alpha$ has an R-R diagram with the form of one of these figures with $n, s > 1$, $a, b > 0$, and $\gcd(a,b) = 1$. If $\alpha$ has an R-R diagram of the form shown in Figure~\ref{PSFFig2a1}a (\ref{PSFFig2a1}b, respectively), then $H[\alpha]$ is a Seifert-fibered space over $D^2$ with two exceptional fibers of indexes $n$ and $s$ ($n(a+b)+b$ and $s$, respectively).}
\label{PSFFig2a1}
\end{figure}

\begin{figure}[tbp]
\centering
\includegraphics[width = 0.55\textwidth]{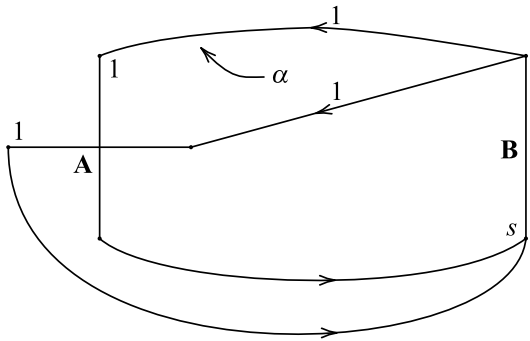}
\caption{If $\alpha$ is a Seifert-m curve in the boundary of a genus two handlebody $H$, then $\alpha$ has an R-R diagram with the form of this figure with $s>1$ in which $\alpha = AB^sA^{-1}B^s$ in $\pi_1(H)$ and $H[\alpha]$ is a Seifert-fibered space over the M\"{o}bius band with one exceptional fiber of index $s$.}
\label{SF_on_Mobius11}
\end{figure}

We can further classify proper power curves in the following theorem. From now on,
to distinguish proper power curves from primitive or Seifert curves, we use the letter $\beta$ instead of $\alpha$ to represent proper power curves. We will see such a situation in Sections~\ref{Genus two R-R diagrams of Seifert-d curves} and \ref{Genus two R-R diagrams of Seifert-m curves}.

\begin{thm}[\textbf{Further classification of proper power curves}]\label{main theorem2}
Suppose $H$ is a genus two handlebody with a complete set of cutting disks $\{D_A, D_B\}$ with $\pi_1(H)=F(A,B)$, where
the generators $A$ and $B$ are dual to $D_A$ and $D_B$ respectively.
If $\beta$ is a proper power curve in $H$, then $\beta$ has one of the following R-R diagrams with respect to the complete set of cutting disks $\{D_A, D_B\}$
up to the homeomorphisms of $H$ inducing the automorphisms
exchanging $A$ and $B$, and replacing $A^{-1}$ by $A$ or $B^{-1}$ by $B$:
\begin{enumerate}
\item \emph{Type I:} $\beta$ has an R-R diagram with a $0$-connection in at least one of the handles.
\item \emph{Type II:} $\beta$ has an R-R diagram of the form shown in Figure~\emph{\ref{PPower8}} with $s>1$.
\item \emph{Type III:} $\beta$ has an R-R diagram of the form shown in Figure~\emph{\ref{PPower10}} with $a,b>0$ and $s>0$.
\item \emph{Type IV:} $\beta$ has an R-R diagram of the form shown in Figure~\emph{\ref{PPower10-1}} with $a,b,c>0$.
\item \emph{Type V:} $\beta$ has an R-R diagram of the form shown in Figure~\emph{\ref{PPower11-1}} with $a,b,c,d>0$.
\end{enumerate}
\end{thm}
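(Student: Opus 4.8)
The plan is to start from the fact, already available to us, that a proper power curve $\beta$ is disjoint from an essential separating disk $D$ of $H$. Cutting $H$ along $D$ produces two solid tori $V_1, V_2$, and $\beta$ lies in $\partial V_i$ for one of them; since $\beta$ does not bound a disk in $H$ and is not primitive, $\beta$ is a $(p,q)$-torus curve on $\partial V_i$ with $|q| \geq 2$ (the ``proper power'' $A^q$ or $B^q$ up to conjugacy), while being trivial in the other solid torus. So intrinsically the pair $(H,\beta)$ is completely understood; the entire content of the theorem is that when we re-express $\beta$ in an \emph{arbitrary} complete cutting-disk system $\{D_A,D_B\}$ — one not necessarily adapted to the separating disk $D$ — only the five listed R-R diagram shapes can occur. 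The first step, then, is to fix $\{D_A,D_B\}$, look at the R-R diagram of $\beta$ it induces, and organize the argument by how the separating disk $D$ (equivalently, the word structure forcing $\beta$ to be a proper power) interacts with $D_A$ and $D_B$.

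The second step is the case split that produces Types I--V. I would first dispose of Type I: if the R-R diagram has a $0$-connection in one handle, that handle's disk can be isotoped off $\beta$, and one checks directly (using the R-R calculus recalled in \cite{K20}) that the resulting picture is consistent with $\beta$ being a proper power, so no further normalization is needed — this is the ``degenerate'' case and it is pulled out first. For diagrams with no $0$-connection in either handle, both $D_A$ and $D_B$ meet $\beta$, and I would count the bands/weights in the R-R diagram. The key combinatorial input is that $\beta$, being conjugate in $\pi_1(H)=F(A,B)$ to a word of the form $W A^q W^{-1}$ (or with $B$) that simultaneously dies in a solid-torus quotient, forces the cyclic word of $\beta$ to have a very restricted syllable structure; translating ``cyclic word has $k$ syllables of each sign in each generator'' into ``R-R diagram has such-and-such connection pattern'' is exactly the kind of bookkeeping the R-R diagram formalism is designed for. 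The number of essential arcs of $\beta$ on the four-holed sphere $\partial H \setminus (D_A \cup D_B)$ is small precisely because $\beta$ bounds a disk on one side of the separating surface; pushing this through should leave only finitely many connection patterns, which I would then match to Figures~\ref{PPower8}, \ref{PPower10}, \ref{PPower10-1}, \ref{PPower11-1}, reading off the parameter constraints ($s>1$; $a,b>0,s>0$; $a,b,c>0$; $a,b,c,d>0$) from primitivity/non-disk-bounding of $\beta$ in the respective solid tori.

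The third step is to reduce the a priori larger list of patterns modulo the stated symmetry group — the homeomorphisms of $H$ realizing $A\leftrightarrow B$, $A\mapsto A^{-1}$, $B\mapsto B^{-1}$. Several of the raw cases coming out of the syllable count are carried to one another by these moves (for instance orientation reversals of the torus curve on $V_i$, or swapping which of $D_A,D_B$ plays the ``power'' role), so the final step is just to verify that after quotienting by this group exactly the five normal forms survive, and that each of the five genuinely occurs (exhibit the separating disk in each figure to confirm they are proper power, not primitive or Seifert).

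The main obstacle I anticipate is not conceptual but the completeness of the case analysis in step two: ensuring that every R-R diagram without a $0$-connection really does fall into one of the four remaining shapes, with no sporadic pattern overlooked. Concretely, the danger is a diagram in which $\beta$ wraps with higher multiplicity around one handle and the separating disk is ``hidden'' in a non-obvious band decomposition; ruling these out requires showing that any such diagram either has a $0$-connection after a change of cutting disks (hence is Type I in disguise) or violates the requirement that $\beta$ bound a disk on one side of some separating disk. I expect this to be where most of the work lies, and I would handle it by an exhaustive but finite examination of the possible weight vectors on the R-R diagram, using the constraint that the total geometric intersection of $\beta$ with a separating disk is zero for some choice of that disk.
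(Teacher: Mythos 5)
Your overall architecture matches the paper's: split off the $0$-connection case as Type I, treat the one-generator case as Type II, and then run a case analysis on the connection patterns of a no-$0$-connection diagram, finally quotienting by the stated symmetries. But there is a genuine gap in your second step, and it is exactly where you predicted the work would lie. You appeal to ``the cyclic word of $\beta$ has a very restricted syllable structure'' without naming the tool that makes this precise and the case analysis finite: the Cohen--Metzler--Zimmerman criterion (Theorem~\ref{recognizing primitives and proper powers}), which says that a cyclically reduced primitive or proper power of a primitive in $F(A,B)$ must, after normalization, have all $A$-exponents equal to $1$ and all $B$-exponents in $\{e,e+1\}$. This is what forces every connection in the $A$-handle to be a $1$-connection and reduces the problem to finitely many band patterns (one or two bands of $1$-connections in the $A$-handle, and one, two, or three bands in the $B$-handle). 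Your alternative justification --- that the number of essential arcs is small ``because $\beta$ bounds a disk on one side of the separating surface'' --- is not correct: $\beta$ does not bound a disk, and the weights in the surviving diagrams (the $a,b,c,d,s$ of Figures~\ref{PPower10}--\ref{PPower11-1}) are unbounded, so no finite examination of weight vectors is available; only the list of band patterns is finite.

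The second missing ingredient is the mechanism for discarding the band patterns that satisfy the CMZ necessary condition but are nevertheless \emph{not} proper powers. In the paper this is the content of Lemmas~\ref{Notppower1}--\ref{Notppower4}: the configurations of Figures~\ref{PPower5}, \ref{PPower7}, \ref{PPower3} and \ref{PPower11}a are eliminated by explicit changes of cutting disks (realizing automorphisms such as $A\mapsto AB^{-s}$ by band-summing $D_B$ over $D_A$), together with an induction on the complexity of the resulting diagram, showing that these curves are in fact primitive or fail CMZ after the substitution $\{A,B\}\mapsto\{AB,AB^{2}\}$. Your proposal offers no substitute for this step; ``verify that after quotienting by the symmetry group exactly the five normal forms survive'' presupposes precisely the eliminations that constitute the bulk of the proof. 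Without CMZ and without the reduction lemmas, the case analysis cannot be closed, so the proposal as written does not yield a complete proof.
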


\begin{figure}[t]
\centering
\includegraphics[width = 0.7\textwidth]{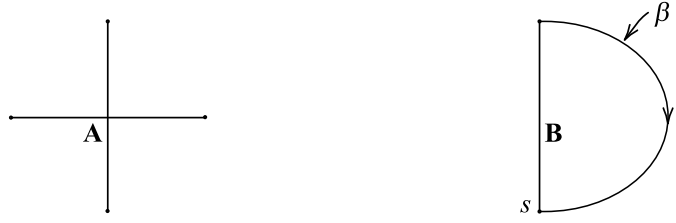}
\caption{Proper power curves referred as $\alpha$ in Theorem~\ref{main theorem} and Type II of proper power curves $\beta$: $[\beta]=B^s$ in Theorem~\ref{main theorem2}.}
\label{PPower8}
\end{figure}

\begin{figure}[t]
\centering
\includegraphics[width = 0.6\textwidth]{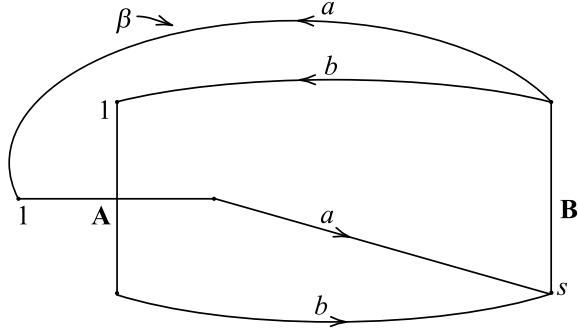}
\caption{Type III of proper power curves $\beta$: $[\beta]=(AB^s)^{a+b}$, where $s>0$ and $a,b>0$.}
\label{PPower10}
\end{figure}

\begin{figure}[t]
\centering
\includegraphics[width = 0.7\textwidth]{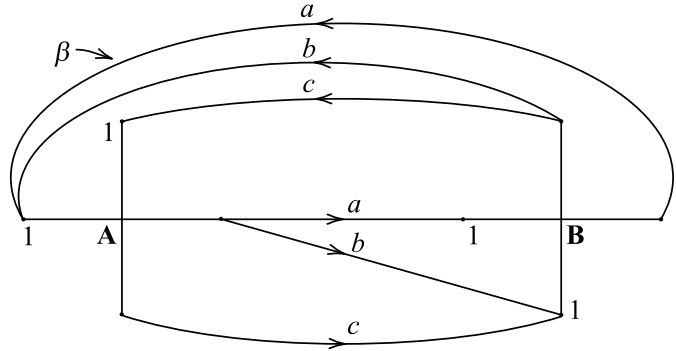}
\caption{Type IV of proper power curves $\beta$: $[\beta]=(AB)^{a+b+c}$, where $a,b,c>0$.}
\label{PPower10-1}
\end{figure}

\begin{figure}[t]
\centering
\includegraphics[width = 0.7\textwidth]{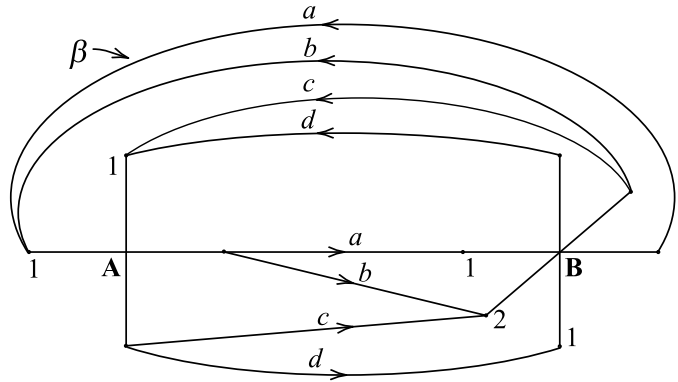}
\caption{Type V of proper power curves $\beta$, where $a,b,c,d>0$.}
\label{PPower11-1}
\end{figure}

The classifications of such curves play very important role in the classification of all hyperbolic primitive/Seifert(or simply P/SF) knots in $S^3$ whose complete list is given in \cite{BK20}. The classifications of primitive and Seifert curves are the first step in the classification of hyperbolic P/SF knots in $S^3$. Additionally if $\alpha$ is Seifert in $H$ such that $H[\alpha]$ embeds in $S^3$, then $H[\alpha]$ is homeomorphic to the exterior of a torus knot. Therefore the classification of Seifert curves in $H$ naturally carries that of all simple closed curves $\alpha$ such that $H[\alpha]$ is homeomorphic to the exterior of a torus knot.

The classification of proper power curves has various applications. First it is used to determine if P/P(primitive/primitive) or P/SF knots are hyperbolic or not. It turns out that if P/P or P/SF knots are not hyperbolic, then there exists a proper power curve in some circumstances. Another application is that when a curve $\alpha$ is Seifert in $H$,
i.e., $H[\alpha]$ is an orientable Seifert-fibered space, a proper power curve disjoint from $\alpha$ becomes a regular fiber of the Seifert-fibered space $H[\alpha]$ and can be used to compute the indexes of exceptional fibers. Also in order to classify some type of primitive/Seifert knots, called knots in Once-Punctured Tori(or simply OPT), properties of proper power curves are used.\\

\noindent\textbf{Acknowledgement.} In 2008, in a week-long series of talks to a seminar in the department of mathematics of the University of Texas as Austin, John Berge outlined a project to completely classify and describe the primitive/Seifert knots in $S^3$. The present paper, which provides some of the background materials necessary to carry out the project, is originated from the joint work with John Berge for the project. I should like to express my gratitude to John Berge for his support and collaboration. I would also like to thank Cameron Gordon and John Luecke for their support while I stayed in the University of Texas at Austin.

\section{R-R diagrams of primitive curves}
\label{S: diagrams of orientable Seifert-fibered spaces}

In this section, we classify R-R diagrams of primitive curves in the boundary of a genus two handlebody.
Let $\alpha$ be a primitive curve in a genus two handlebody $H$. In other words, $H[\alpha]$ is a solid torus.
In order to obtain R-R diagrams of $\alpha$ we use the following lemma which shows equivalent conditions of primitivity. The proof may be found in \cite{W36}, \cite{Z70},
or \cite{G87}.

\begin{lem}\label{primitivity}
The following are equivalent:
\begin{itemize}
\item[(1)] $\alpha$ is primitive in $H$, i.e., $H[\alpha]$ is a solid torus;
\item[(2)] $\alpha$ belongs to a basis for the free group $\pi_1(H)$;
\item[(3)] $\alpha$ is transverse to a properly embedded disk in $H$.
\end{itemize}
\end{lem}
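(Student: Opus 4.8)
The goal is to prove Lemma~\ref{primitivity}, establishing the equivalence of the three characterizations of primitivity. The plan is to argue in a cycle: $(1) \Rightarrow (2) \Rightarrow (3) \Rightarrow (1)$, leaning on the standard Stallings/Nielsen picture of free groups and on cut-and-paste arguments with properly embedded disks in the handlebody.

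First I would prove $(2) \Rightarrow (1)$. If $\alpha$ together with some element $\gamma$ forms a basis $\{\alpha, \gamma\}$ of $\pi_1(H) = F_2$, then there is a homeomorphism of $H$ (since the handlebody $H$ realizes every automorphism of its fundamental group — a fact due to Nielsen for $F_2$, or one may invoke the fact that $\mathrm{Out}(F_2)$ is generated by realizable moves) carrying $\alpha$ to a standard generator, i.e., to a curve running once over one handle. Adding a $2$-handle along such a standard curve manifestly produces a solid torus (the remaining handle survives, and the killed handle is cancelled). Thus $H[\alpha]$ is a solid torus. Next, $(1) \Rightarrow (2)$: if $H[\alpha]$ is a solid torus, then $\pi_1(H[\alpha]) = \pi_1(H)/\langle\!\langle \alpha \rangle\!\rangle \cong \Z$. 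Since $F_2$ modulo the normal closure of a single element is $\Z$ exactly when that element is a primitive element of the free group (this is a theorem of Magnus, sometimes phrased via the Freiheitssatz, or can be deduced from the fact that one-relator groups $\langle a, b \mid r \rangle$ are $\Z$ iff $r$ is primitive), $\alpha$ is a basis element, giving $(2)$.

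For the geometric equivalences, $(3) \Rightarrow (1)$ is the most hands-on: if $\alpha$ is transverse to (i.e., meets exactly once, transversally) a properly embedded disk $D$ in $H$, then cutting $H$ along $D$ yields a solid torus $H'$ (a genus two handlebody cut along a non-separating disk meeting a curve once is a solid torus), in which the arc $\alpha \cap H'$ is a boundary-parallel spanning arc; attaching the $2$-handle along $\alpha$ then reglues in a way that is seen directly to give a solid torus — one checks the resulting manifold has the homology and $\pi_1$ of $S^1 \times D^2$ and is irreducible with torus boundary, hence is a solid torus, or more cleanly exhibits the core directly. Conversely $(1) \Rightarrow (3)$ (equivalently $(2) \Rightarrow (3)$): once $\alpha$ is a basis element, apply a homeomorphism of $H$ taking it to a standard generator $A$, which is visibly transverse to the meridian disk $D_B$ of the other handle; pull this disk back to get the desired disk for the original $\alpha$.

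I expect the main obstacle to be the algebraic input in $(1) \Rightarrow (2)$, namely the fact that a one-relator quotient $F_2 / \langle\!\langle w \rangle\!\rangle \cong \Z$ forces $w$ to be primitive; this is not entirely elementary and is exactly the content that the cited references (\cite{W36}, \cite{Z70}, \cite{G87}) are invoked for, so in practice I would simply cite it rather than reprove it. The geometric steps, by contrast, are routine cut-and-paste, though one must be careful to track boundary behavior (that the relevant disks are essential, that cutting produces a solid torus and not something reducible) and to justify that every automorphism of $F_2$ is induced by a homeomorphism of $H$ — which for genus two handlebodies is classical (Nielsen) and can be quoted. Given that the statement is explicitly attributed to \cite{W36, Z70, G87}, the expected ``proof'' in the paper is really a pointer to these sources together with at most a sentence indicating the cycle of implications above.
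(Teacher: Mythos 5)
The paper offers no proof of this lemma at all: it simply says the proof may be found in \cite{W36}, \cite{Z70}, or \cite{G87}. So your closing observation is exactly right, and at the level of ``what the paper actually does'' your proposal matches it.

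That said, if you were to write out your sketch, the geometric steps contain a genuine gap that is worth flagging, because it is precisely the content of the cited theorems. In both your argument for $(2)\Rightarrow(1)$ and for $(1)\Rightarrow(3)$ you ``apply a homeomorphism of $H$ taking $\alpha$ to a standard generator.'' Realizing an automorphism of $\pi_1(H)$ by a homeomorphism of $H$ (which is indeed classical) only controls the image of $\alpha$ up to free homotopy in $H$; it does not put the simple closed curve $h(\alpha)$ into standard position on $\partial H$. A simple closed curve on $\partial H$ representing the conjugacy class of $A$ can still intersect $\partial D_B$ (and every other meridian disk) many times, with cancellation in the word it reads off. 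The nontrivial assertion --- that a simple closed curve on $\partial H$ which is algebraically primitive is isotopic on $\partial H$ to one meeting a properly embedded disk transversely in a single point --- is exactly what Whitehead's and Zieschang's work on simple closed curves in handlebody boundaries (and Gordon's treatment) establishes, so your sketch silently assumes the hard implication. Your direction $(3)\Rightarrow(1)$ is fine and genuinely elementary (cutting along the disk dual to $\alpha$ and cancelling the $2$-handle), and $(1)\Rightarrow(2)$ via $\pi_1(H)/\langle\!\langle\alpha\rangle\!\rangle\cong\Z$ forcing primitivity is a correct, citable algebraic fact. In short: cite the references for $(1)/(2)\Rightarrow(3)$ rather than deriving it from realizability of automorphisms, and the rest of your cycle stands.
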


\begin{thm}\label{R-R diagram of primitive curves}
If $\alpha$ is a nonseparating simple closed curve in the boundary of a genus two handlebody $H$ such that $\alpha$ is primitive on $H$,
then $\alpha$ has an R-R diagram with the form of Figure~\emph{\ref{PSFFig1aa1}}.
\end{thm}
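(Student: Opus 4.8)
The plan is to start from the equivalent formulations of primitivity in Lemma~\ref{primitivity}. Since $\alpha$ is primitive and nonseparating, by (2) it is part of a basis $\{\alpha, \gamma\}$ of $\pi_1(H) = F(A,B)$. The strategy is to look at the word $w$ representing $\alpha$ in the given free basis $\{A,B\}$ coming from a chosen complete set of cutting disks $\{D_A, D_B\}$, and to show that after applying homeomorphisms of $H$ (which act on $\pi_1(H)$ by automorphisms of the free group — in fact the whole automorphism group is realized geometrically on a genus two handlebody), $\alpha$ can be taken to have a particularly simple cyclic word, namely the one whose R-R diagram is Figure~\ref{PSFFig1aa1}. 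Concretely, I would first recall that primitive elements of $F(A,B)$ are classified up to $\mathrm{Aut}(F_2)$: every primitive element is equivalent to $A$, and more usefully, the set of primitive elements is the orbit of $A$ under $\mathrm{Aut}(F_2)$, and by the theory of primitive words in the two-generator free group (Osborne--Zieschang, Nielsen) each primitive element is, up to conjugation and automorphism, a word of a very restricted combinatorial shape.

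The key technical step is to translate the algebraic normal form into the combinatorial/geometric normal form given by an R-R diagram. An R-R diagram records a cyclically reduced word in $\{A,B\}$ as a curve on the genus two surface meeting each of the two one-handles in a collection of parallel arcs; the ``form of Figure~\ref{PSFFig1aa1}'' should be the R-R diagram in which the curve runs once (or with the standard primitive pattern) over one handle and does the Euclidean/Christoffel pattern over the other. So I would proceed as follows: (i) reduce to the case where $\alpha$ is cyclically reduced and meets both handles essentially (the degenerate cases, where $\alpha$ is disjoint from one of $D_A$ or $D_B$, give the trivial sub-cases of the figure directly); (ii) use Lemma~\ref{primitivity}(3): primitivity gives a properly embedded disk $E$ in $H$ transverse to $\alpha$ meeting it once, and $\partial E$ together with $\alpha$ gives a new basis; cutting along $E$ reduces $H[\alpha]$ visibly to a solid torus and pins down how $\alpha$ must wrap; (iii) analyze the two handle-blocks of the R-R diagram: primitivity forces the connection pattern in each handle to be of the ``no backtracking / single ribbon'' type, and the labels on the bands must then satisfy the coprimality/linear relations that produce exactly the picture in Figure~\ref{PSFFig1aa1}.

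I expect the main obstacle to be step (ii)--(iii): showing that the constraints imposed by primitivity are exactly captured by the combinatorics of Figure~\ref{PSFFig1aa1}, i.e., that no other R-R diagram shape can support a primitive curve. This is essentially a case analysis on the possible ``connection types'' in each handle (how many bands, whether there is a $0$-connection, how the bands pair up across the two handles) combined with the algebraic fact that the resulting cyclic word must be primitive in $F(A,B)$. The cleanest route is probably to invoke the known structure of primitive words in $F_2$ — that a cyclically reduced primitive word, if it involves both generators, is (up to the stated symmetries) determined by a single coprime pair $(p,q)$ and looks like a Sturmian/Christoffel word in $A$ and $B$ — and then observe that the Christoffel-word R-R diagram is precisely Figure~\ref{PSFFig1aa1}. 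The remaining work is bookkeeping: checking that the homeomorphisms listed (exchanging $A \leftrightarrow B$, replacing $A^{-1}$ by $A$, $B^{-1}$ by $B$) together with conjugation are enough to absorb the ambiguity in the normal form, and that orientations/signs on the bands in the figure match. I would present this as: reduce to normal form algebraically, then read off the diagram, handling the one-handle-disjoint degenerate cases separately at the start.
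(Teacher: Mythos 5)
Your plan does not match what the theorem actually asserts, and it misidentifies the target figure. The statement is existential over R-R diagrams: one is free to choose the handle structure (equivalently, the complete set of cutting disks) adapted to $\alpha$, and the claim is only that \emph{some} choice yields the form of Figure~\ref{PSFFig1aa1}. The paper's proof exploits this directly: by Lemma~\ref{primitivity}(3) there is a cutting disk $D_A$ meeting $\alpha$ transversely in one point; one takes the $A$-handle to be a regular neighborhood $N$ of $\partial D_A\cup\alpha$ (a once-punctured torus whose boundary bounds a separating disk), and the $B$-handle to be $\overline{\partial H-N}$, which contains a unique cutting disk $D_B$ after cutting along $D_A$. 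With respect to this partition $\alpha$ lies entirely in the $A$-handle and crosses $D_A$ once, and \emph{that} is Figure~\ref{PSFFig1aa1} --- a single $1$-connection on one handle, not a Christoffel-word pattern with a coprime pair $(p,q)$. Your stated ``main obstacle,'' namely showing that no other R-R diagram shape can support a primitive curve, is not something that needs to be (or can be) proved: primitive curves have many other R-R diagrams with respect to other cutting systems, which is precisely why the fixed-cutting-disk classification is a separate and harder problem (compare Theorem~\ref{main theorem4} for proper powers).

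Beyond the misreading of the figure, the algebraic route you sketch has a genuine gap even on its own terms. Normalizing the cyclic word of $\alpha$ under $\mathrm{Aut}(F_2)$ (realized by homeomorphisms of $H$) controls only the conjugacy class of $\alpha$ in $\pi_1(H)$, but the free homotopy class in $H$ does not determine the isotopy class of the embedded curve in $\partial H$; two non-isotopic simple closed curves on $\partial H$ can represent the same conjugacy class. So one cannot ``read off the R-R diagram'' from a normal form of the word without an additional geometric argument, which is exactly what Lemma~\ref{primitivity}(3) and the regular-neighborhood construction supply. Your step (ii) gestures at the right ingredient (a disk meeting $\alpha$ once), but the proposal never takes the decisive step of building the handle decomposition \emph{around} $\alpha$ and $\partial D_A$; instead it falls back on a word-combinatorics case analysis that targets the wrong normal form.
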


\begin{proof}
By Lemma~\ref{primitivity}, there exists a cutting disk $D_A$ in $H$ such that $\alpha$ meets $\partial D_A$ transversely
in a single point. Consider the regular neighborhood $N$ of $\partial D_A \cup \alpha$ in $\partial H$. Then $N$ is a
once-punctured torus which contains $\partial D_A$, and whose boundary $\partial N$ bounds a separating disk of $H$.
Let $N'=\overline{\partial H-N}$. Then it follows by cutting $H$ open along $D_A$
that there exists a unique cutting disk $D_B$ of $H$ up to isotopy whose boundary $\partial D_B$ lies on $N'$.

This partition $\{N, N'\}$ of $\partial H$ with $\partial D_A \subset N$ and $\partial D_B \subset N'$ gives rise to an R-R diagram of $\alpha$ in
which $N$ and $N'$ correspond to the $A$-handle and $B$-handle respectively.
Since $\alpha$ lies in $N$ and intersects $\partial D_A$ in a single point, $\alpha$ has an R-R diagram of the form shown in Figure~{\ref{PSFFig1aa1}}.
This completes the proof.
\end{proof}

\section{R-R diagrams of a proper power curve and more classifications} \label{RRdiagram}

In this section, we classify proper power curves. Let $\beta$ be a proper power curve in the boundary of a genus two handlebody $H$. The following lemma is an easy consequence of Lemma~\ref{primitivity}.

\begin{lem}\label{proper power}
The following are equivalent:
\begin{itemize}
\item[(1)] $\beta$ is a proper power curve in $H$, i.e., $\beta$ is
disjoint from an essential separating disk, does not bound a disk, and is not
primitive in $H$;
\item[(2)] $\beta$ is conjugate to $w^n, \hs n>1,$ of $\pi_1(H)$, where $w$
is a free generator of $\pi_1(H)$;
\item[(3)] There exists a complete set of cutting disks $\{D_A, D_B\}$ of $H$ such that $\beta$ is disjoint to, say, $D_A$ and is transverse to $D_B$ $n>1$ times.
\end{itemize}
\end{lem}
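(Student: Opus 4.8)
The plan is to derive the chain of equivalences $(1)\Leftrightarrow(2)\Leftrightarrow(3)$ from Lemma~\ref{primitivity} together with basic facts about free groups and handlebodies. First I would establish $(1)\Rightarrow(2)$. Suppose $\beta$ is disjoint from an essential separating disk $D$ in $H$. Cutting $H$ along $D$ produces two solid tori $V_1, V_2$, and $\beta$ lies in the boundary of one of them, say $V_1$, as an essential curve in $\partial V_1$ (it is essential in $\partial H$, and it cannot bound a disk in $V_1$, since then it would bound a disk in $H$, contradicting the hypothesis). Hence $[\beta]$ is conjugate in $\pi_1(H)=\pi_1(V_1)*\pi_1(V_2)$ to a nonzero power $w^n$ of a generator $w$ of the free cyclic group $\pi_1(V_1)$, and $w$ extends to a free basis of $\pi_1(H)$ since $D$ is a nonseparating-on-the-complement cutting picture — more precisely, $w$ together with a core of $V_2$ forms a basis of $\pi_1(H)$. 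If $n=\pm1$ then $\beta$ would be primitive by Lemma~\ref{primitivity}(2), contradicting (1); and $n\neq 0$ since $\beta$ does not bound a disk. So $|n|>1$, and replacing $w$ by $w^{-1}$ if necessary gives $n>1$.

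Next I would prove $(2)\Rightarrow(1)$. If $[\beta]$ is conjugate to $w^n$ with $n>1$ and $w$ a free generator of $\pi_1(H)$, then $\beta$ does not bound a disk (its class is nontrivial in $\pi_1(H)$), and $\beta$ is not primitive: a primitive element cannot be a proper power in a free group, since under the abelianization or, more directly, since in a free group $F$ the centralizer of $w$ is $\langle w\rangle$ and $w^n$ primitive would force $w^n$ into a basis, impossible by comparing with the basis $w, \dots$ (a Nielsen/transformation argument, or simply the fact that $w^n$ is not primitive when $|n|>1$, e.g.\ via the Whitehead graph or abelianization modulo the other generators). It remains to produce the essential separating disk disjoint from $\beta$: extend $w$ to a basis $\{w, v\}$ of $\pi_1(H)$; by the standard correspondence between bases of $\pi_1(H)$ and complete sets of cutting disks (used already in the proof of Theorem~\ref{R-R diagram of primitive curves}), there is a complete set of cutting disks $\{D_w, D_v\}$ dual to $\{w,v\}$, and the boundary of a regular neighborhood of $D_w \cup D_v$ is the required separating disk; since $[\beta]\in\langle w\rangle$, one can isotope $\beta$ off $D_v$, hence off the separating disk. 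This also sets up $(2)\Rightarrow(3)$: with this basis, $\beta$ is isotopic into the complement of $D_v =: D_A$ and meets $D_w =: D_B$ algebraically, and after an isotopy removing bigons, geometrically $|n|=n>1$ times.

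Finally $(3)\Rightarrow(2)$ is immediate: if $\beta$ is disjoint from $D_A$ and meets $D_B$ exactly $n>1$ times, then reading the word $[\beta]$ in the basis $\{A,B\}$ dual to $\{D_A,D_B\}$ gives a cyclically reduced word in $B^{\pm1}$ only (no $A$'s, since $\beta\cap D_A=\emptyset$) of syllable length $n$; since $\beta$ is a simple closed curve disjoint from the nonseparating disk $D_A$, it lies on the boundary of the solid torus obtained by cutting along $D_A$, so $[\beta]$ is conjugate to $B^{\pm n}$, i.e.\ to $w^n$ with $w=B$ a free generator, and $n>1$. I expect the main obstacle to be the ``purely algebraic'' input in $(2)\Rightarrow(1)$ — cleanly arguing that a proper power ($n>1$) is never primitive in a free group, and that extending $w$ to a basis yields a genuine separating disk disjoint from $\beta$; both are standard but deserve a careful sentence rather than a wave of the hand. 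Everything else is a routine translation between the curve, its cutting-disk picture, and its word in $\pi_1(H)$, exactly as in Lemma~\ref{primitivity} and the proof of Theorem~\ref{R-R diagram of primitive curves}.
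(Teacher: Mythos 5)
Your directions $(1)\Rightarrow(2)$ and $(3)\Rightarrow(2)$ are sound, and the algebraic half of $(2)\Rightarrow(1)$ (a proper power $w^n$, $n>1$, is not primitive, e.g.\ by abelianizing) is fine. For what it is worth, the paper offers no proof at all of this lemma beyond the remark that it is ``an easy consequence of Lemma~\ref{primitivity}'', so there is no detailed argument of the author's to compare against; but your attempt does expose where the real work lies, and that is exactly where your proposal has a gap.

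The gap is the sentence ``since $[\beta]\in\langle w\rangle$, one can isotope $\beta$ off $D_v$, hence off the separating disk,'' which you also reuse for $(2)\Rightarrow(3)$. This is the entire geometric content of the implication $(2)\Rightarrow(1)$: you are passing from an algebraic statement about the conjugacy class of $\beta$ to a geometric disjointness of the embedded curve $\beta$ from an embedded disk, and nothing in the algebra forces it. A simple closed curve can represent a word conjugate into $\langle w\rangle$ while meeting $D_v$ in (algebraically cancelling) points that cannot be removed by isotopy of $\beta$ with $D_v$ held fixed; moreover ``the disk system dual to $\{w,v\}$'' is not unique up to isotopy (homeomorphisms of $H$ acting trivially on $\pi_1(H)$, e.g.\ disk twists, move $D_v$ to non-isotopic disks inducing the same basis), so the claim is not even well-posed as stated. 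To close this you need a genuine input, for instance: (i) the handle-addition lemma of Jaco/Casson--Gordon together with the sphere theorem --- if $\partial H-\beta$ were incompressible in $H$ then $H[\beta]$ would be aspherical, contradicting the torsion in $\pi_1(H[\beta])\cong\mathbb{Z}*\mathbb{Z}_n$; hence some essential disk of $H$ misses $\beta$, and one then analyzes the separating and nonseparating cases; or (ii) Zieschang's classification of simple closed curves in handlebody boundaries \cite{Z70}, which is presumably what the paper's citation is standing in for; or (iii) a wave/Whitehead-graph reduction of the underlying Heegaard diagram. A secondary, easily repaired slip: the boundary of a regular neighborhood of $D_w\cup D_v$ is a pair of spheres, not a separating disk; the separating disk you want is the properly embedded disk in the ball $H$ cut along $D_w\cup D_v$ that separates the two copies of $D_w$ from the two copies of $D_v$, and one must still check (rather than assert) that it can be chosen disjoint from $\beta$.
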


The main results of this section are the following.

\begin{thm}\label{main theorem3}
Suppose $\beta$ is a simple closed curve in the boundary of a genus two handlebody $H$.
If $\beta$ is a proper power curve in $H$, then $\beta$ has an R-R diagram of the form shown in Figure~\emph{\ref{PPower8}}.
\end{thm}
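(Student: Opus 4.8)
The plan is to start from the characterization in Lemma~\ref{proper power}(3): there is a complete set of cutting disks $\{D_A, D_B\}$ of $H$ with $\beta$ disjoint from $D_A$ and transverse to $D_B$ in exactly $n > 1$ points. Cutting $H$ open along $D_A$ produces a solid torus $V$ whose core is dual to $D_A$, and the curve $\beta$ lies in $\partial V$ minus the two scars of $D_A$; being disjoint from $D_A$, it lies in the genus-one piece. I would analyze how $\beta$ sits relative to $D_B$ inside this picture: the $n$ intersection points with $\partial D_B$ divide $\beta$ into $n$ arcs, each of which is either a ``$B$-arc'' running over the $B$-handle or lies in the complementary surface. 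The aim is to show these arcs can be taken, up to isotopy (and up to the allowed automorphisms, e.g. replacing $B$ by $B^{-1}$), in a single standard pattern, which is exactly what Figure~\ref{PSFFig1aa1}/\ref{PPower8} records, namely $[\beta] = B^s$ with $s = n > 1$ in $\pi_1(H)$.

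The key steps, in order, would be: (i) Use Lemma~\ref{proper power} to fix $\{D_A, D_B\}$ with $\beta \cdot \partial D_A = 0$ and $\beta \cdot \partial D_B = n$. (ii) Build the R-R diagram of $\beta$ with respect to $\{D_A, D_B\}$: since $\beta$ misses $D_A$ entirely, the $A$-handle carries no strands of $\beta$ through it, so all the ``action'' is at the $B$-handle, where $\beta$ enters and leaves $n$ times. (iii) Examine the connection pattern on the $B$-handle: the $n$ arcs of $\beta$ outside the handles connect the $2n$ endpoints on $\partial D_B$, and, because $\beta$ is embedded and $H$ has genus two, the combinatorics of non-crossing arcs on the (thrice-punctured, essentially) complementary surface forces the endpoints to be connected ``in parallel,'' i.e. $\beta$ wraps $n$ times around the $B$-handle in the same direction with no backtracking. (iv) Rule out the backtracking/ cancellation possibilities: if two consecutive strands ran over the $B$-handle in opposite directions, one could either reduce $n$ (contradicting minimality from Lemma~\ref{proper power}, or contradicting that $\beta$ is conjugate to an honest $n$-th power $w^n$) or find that $\beta$ is in fact primitive or disk-bounding, both excluded. (v) Conclude that the diagram is exactly the one in Figure~\ref{PPower8}, reading off $[\beta] = B^s$ with $s = n > 1$, and note that the separating disk disjoint from $\beta$ is visible as the boundary of a neighborhood of the $B$-handle together with $\beta$.

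The main obstacle I expect is step (iii)--(iv): controlling the arcs of $\beta$ in the complementary surface and showing no ``mixed'' connection pattern survives. Concretely, once $D_A$ is removed the complement is a sphere with some number of holes (the scars of $D_A$ and of $D_B$), and one must argue that an embedded curve hitting $\partial D_B$ minimally and missing $D_A$ has essentially one isotopy class of arc system there; any would-be alternative either yields a reducible intersection with $D_B$ (so $n$ was not minimal) or makes $\beta$ bound a disk or be primitive. Handling all cases of how the $2n$ points can be paired by embedded arcs, and knowing precisely when such pairings are realizable by a simple closed curve, is the delicate combinatorial core. The other steps are essentially bookkeeping: converting between the cut-open solid-torus picture, the word $[\beta]$ in $F(A,B)$, and the R-R diagram conventions of \cite{K20}, together with invoking Lemma~\ref{primitivity} and Lemma~\ref{proper power} to discard the degenerate patterns. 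I would also remark that the normalization ``up to exchanging $A,B$ and up to $A\mapsto A^{-1}$, $B\mapsto B^{-1}$'' is what lets us assume the wrapping is positive and over the $B$-handle, matching the stated figure.
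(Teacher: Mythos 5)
Your proposal is correct and takes essentially the same route as the paper, which disposes of Theorem~\ref{main theorem3} with the single remark that it ``follows immediately from the definition of a proper power curve.'' The combinatorial work you anticipate in steps (iii)--(iv) evaporates if you cut along the essential separating disk furnished by the definition rather than along $D_A$: then $\beta$ lies in the once-punctured torus containing $\partial D_B$, where the classification of essential simple closed curves (boundary-parallel, meridional, or a curve wrapping $s$ times longitudinally) together with the exclusions ``does not bound a disk'' and ``not primitive'' forces $[\beta]=B^s$ with $s>1$, which is exactly Figure~\ref{PPower8}.
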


\begin{thm}\label{main theorem4}
Suppose $H$ is a genus two handlebody with a complete set of cutting disks $\{D_A, D_B\}$ with $\pi_1(H)=F(A,B)$, where
the generators $A$ and $B$ are dual to $D_A$ and $D_B$ respectively.
If $\beta$ is a proper power curve in $H$, then $\beta$ has one of the following R-R diagrams with respect to the complete set of cutting disks $\{D_A, D_B\}$
up to the homeomorphisms of $H$ inducing the automorphisms
exchanging $A$ and $B$, and replacing $A^{-1}$ by $A$ or $B^{-1}$ by $B$:
\begin{enumerate}
\item \emph{Type I:} $\beta$ has an R-R diagram with a $0$-connection in at least one of the handles.
\item \emph{Type II:} $\beta$ has an R-R diagram of the form shown in Figure~\emph{\ref{PPower8}} with $s>1$..
\item \emph{Type III:} $\beta$ has an R-R diagram of the form shown in Figure~\emph{\ref{PPower10}} with $a,b>0$ and $s>0$.
\item \emph{Type IV:} $\beta$ has an R-R diagram of the form shown in Figure~\emph{\ref{PPower10-1}} with $a,b,c>0$.
\item \emph{Type V:} $\beta$ has an R-R diagram of the form shown in Figure~\emph{\ref{PPower11-1}} with $a,b,c,d>0$.
\end{enumerate}
\end{thm}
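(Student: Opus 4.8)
# Proof Proposal for Theorem~\ref{main theorem4}

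The plan is to start from the conclusion of Lemma~\ref{proper power}: since $\beta$ is proper power, it is conjugate to $w^n$ with $n > 1$ and $w$ a free generator of $\pi_1(H) = F(A,B)$. Fixing the given cutting disks $\{D_A, D_B\}$, I would first reduce $\beta$ (up to isotopy in $\partial H$) to a position minimizing total intersection with $\partial D_A \cup \partial D_B$, so that the R-R diagram of $\beta$ with respect to $\{D_A, D_B\}$ is in reduced form. If $\beta$ has a $0$-connection in either handle, we are in Type I and there is nothing more to prove; so assume henceforth that there are no $0$-connections in either handle. The goal is then to show that the cyclic word $[\beta]$ in $F(A,B)$, being a proper power $u^n$ of some cyclically reduced word $u$, forces the R-R diagram into one of the shapes of Figures~\ref{PPower8}, \ref{PPower10}, \ref{PPower10-1}, or \ref{PPower11-1}, after applying the allowed automorphisms (swapping $A \leftrightarrow B$, inverting $A$ or $B$).

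The key combinatorial input is the structure theory of R-R diagrams developed in \cite{K20}: a reduced R-R diagram of a simple closed curve on a genus two handlebody is determined by the number of parallel bands of each ``connection'' type in each handle, and the condition that $\beta$ be an embedded (simple) curve severely restricts which connection multisets can occur — essentially the arcs running through each handle must be realizable as disjoint arcs on the punctured torus/annulus pieces, which limits each handle to carrying arcs of at most two distinct connection slopes, with a Farey/continued-fraction compatibility between the two handles. I would enumerate the possible ``connection patterns'' in each handle under the simplicity constraint, then impose the arithmetic condition that the resulting word in $F(A,B)$ is a proper power. The proper-power condition is what collapses the generic two-slope-per-handle picture down to the short finite list: writing the exponent sums and the cyclic syllable structure, one checks that $[\beta] = u^n$ with $n>1$ is compatible only with the periodic patterns $B^s$ (Type II, $u=B$, which covers the degenerate case where $\beta$ misses $D_A$ entirely), $(AB^s)^{a+b}$ (Type III), $(AB)^{a+b+c}$ (Type IV), and the remaining mixed pattern of Figure~\ref{PPower11-1} (Type V). Each verification amounts to: (i) read the cyclic word off the candidate diagram, (ii) confirm it is the asserted power, and (iii) confirm via the simplicity criterion of \cite{K20} that no other band-multiset yields an embedded proper-power curve.

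Concretely, the steps in order would be: (1) invoke Lemma~\ref{proper power} to get $[\beta]$ conjugate to $w^n$, $n>1$; (2) put the R-R diagram in reduced form and dispose of the $0$-connection case as Type I; (3) in the no-$0$-connection case, use the simplicity constraints of \cite{K20} to bound the number of distinct connection types in each handle and to relate the two handles' slopes; (4) use the automorphisms exchanging $A,B$ and inverting $A$ or $B$ to normalize signs and roles; (5) within this normalized finite catalogue of admissible diagrams, impose that the associated cyclic word be a proper power, and read off that exactly the four families of Figures~\ref{PPower8}, \ref{PPower10}, \ref{PPower10-1}, \ref{PPower11-1} survive, with the stated parameter ranges; (6) conversely verify each of these four diagrams does represent an embedded proper power curve, completing the classification. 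The main obstacle I anticipate is step (5): ruling out the ``near misses'' — R-R diagrams that superficially look more complicated but whose word happens to be a proper power, or that have a proper-power word but fail simplicity — will require a careful case analysis of the Farey-type compatibility between the two handles, and it is here that the bulk of the work (and the case distinctions producing Types III, IV, V as genuinely distinct) lives.
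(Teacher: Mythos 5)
Your overall architecture matches the paper's: split off the $0$-connection case as Type I, normalize by the allowed automorphisms, enumerate the remaining band patterns, and test for the proper-power condition. But there is a genuine gap at the step you yourself flag as the crux, step (5), and it is twofold. First, the reduction of the syllable structure to the form $AB^{m_1}\cdots AB^{m_l}$ with $\{m_1,\dots,m_l\}\subseteq\{s,s+\epsilon\}$ is not a consequence of the topological simplicity constraints on R-R diagrams that you invoke from \cite{K20}; simple closed curves realize words like $A^2B^2$ or $AB^sA^{-1}B^s$ that violate it. The paper gets this normal form from an algebraic input you never name: the Cohen--Metzler--Zimmerman theorem (Theorem~\ref{recognizing primitives and proper powers}), which says that any cyclically reduced primitive \emph{or proper power of a primitive} in $F(A,B)$ has, after inverting a generator, all $A$-exponents equal to $1$ and all $B$-exponents in $\{e,e+1\}$. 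Without this (or an equivalent), your "finite catalogue of admissible diagrams" is not finite in the relevant sense, and "writing the exponent sums and the cyclic syllable structure" does not by itself decide whether a word is a proper power.

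Second, and more seriously, the CMZ syllable condition is a \emph{necessary} condition shared by primitives and proper powers alike, so it does not collapse the catalogue to your four families. The paper must still dispose of several diagrams whose words pass the syllable test but turn out to be primitive rather than proper powers (the diagrams of Figures~\ref{PPower5}, \ref{PPower7}, \ref{PPower3}, and \ref{PPower11}a). This is done in Lemmas~\ref{Notppower1}--\ref{Notppower4} by realizing the Nielsen transformation $A\mapsto AB^{-s}$ as a change of cutting disks, tracking the effect on the R-R diagram via hybrid diagrams, and inducting on the complexity of the diagram until the word is visibly primitive or visibly a power; it is an iterated Euclidean-algorithm-style reduction. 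Your appeal to a "Farey-type compatibility between the two handles" points in the right direction in spirit, but you supply no mechanism for distinguishing primitives from proper powers among the CMZ-admissible words, and without one the classification into exactly Types II--V does not follow.
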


The proof of Theorem~\ref{main theorem3} follows immediately from the definition of a proper power curve. In order to prove Theorem~\ref{main theorem4}, we need the following minor generalization of a result of Cohen, Metzler, and Zimmerman \cite{CMZ81} which allows one to determine easily if a given cyclically reduced word in a free group of rank two is a primitive or a proper power of a primitive. %(here, we revised)

\begin{thm}\cite{CMZ81}
\label{recognizing primitives and proper powers}
Suppose a cyclic conjugate of
\[W = A^{n_1}B^{m_1} \dots A^{n_l}B^{m_l}\]
 is a member of a basis of $F(A,B)$ or a proper power of a member of a basis of $F(A,B)$, where $l \geq 1$ and each indicated exponent is nonzero. Then, after perhaps replacing $A$ by $A^{-1}$ or $B$ by $B^{-1}$, there exists $e > 0$ such that:
\[
n_1 = \dots = n_l = 1,
\quad
\text{and}
\quad
\{m_1, \dots ,m_l\} \subseteq \{e, e+1\},
\]
or
\[
\{n_1, \dots ,n_l\} \subseteq \{e, e+1\},
\quad
\text{and}
\quad
m_1 = \dots = m_l = 1.
\]
\end{thm}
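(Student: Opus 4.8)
The plan is to deduce the statement from the primitive case, which is the content of \cite{CMZ81} and is precisely Theorem~\ref{recognizing primitives and proper powers} with the clause ``or a proper power of a member of a basis'' deleted. So the only thing to add is the passage from a primitive to a proper power of a primitive, and the heart of that is the elementary observation that a power of a word already in the ``balanced'' normal form of the conclusion is again in that normal form. I would organize the argument entirely in terms of cyclic words.

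First I would record that since $W$ is written in alternating syllable form with all indicated exponents nonzero and $l \geq 1$, it is cyclically reduced, its cyclic word involves both $A$ and $B$, and the number $l$ together with the multisets $\{n_1,\dots,n_l\}$ and $\{m_1,\dots,m_l\}$ are invariants of that cyclic word. By hypothesis some cyclic conjugate of $W$ equals $V^k$ in $F(A,B)$ with $V$ a member of a basis and $k \geq 1$; replacing $V$ by a cyclically reduced conjugate, $V^k$ is again cyclically reduced and still a cyclic conjugate of $W$. If $V$ were conjugate to $A^{\pm1}$ or $B^{\pm1}$ then $W$ would be a power of a single generator, contradicting that its cyclic word involves both $A$ and $B$; hence the cyclic word of $V$ also involves both generators, and the primitive case of the theorem applies to it.

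Next I would apply that primitive case to $V$: for suitable signs $\varepsilon,\delta\in\{\pm1\}$, after replacing $A$ by $A^{\varepsilon}$ and $B$ by $B^{\delta}$, some cyclic conjugate $V_0$ of $V$ has the form $A^{p_1}B^{q_1}\cdots A^{p_t}B^{q_t}$ with, for some $e>0$, either all $p_i=1$ and $\{q_1,\dots,q_t\}\subseteq\{e,e+1\}$, or all $q_i=1$ and $\{p_1,\dots,p_t\}\subseteq\{e,e+1\}$. Let $\phi$ be the automorphism $A\mapsto A^{\varepsilon}$, $B\mapsto B^{\delta}$. Since cyclic conjugacy is preserved by $\phi$ and by raising to the $k$-th power, $\phi(V_0)^k=(A^{p_1}B^{q_1}\cdots A^{p_t}B^{q_t})^k$ is a cyclic conjugate of $\phi(V)^k$, hence of $\phi(W)$. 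The word $(A^{p_1}B^{q_1}\cdots A^{p_t}B^{q_t})^k$ is already in alternating syllable form — consecutive periods abut as $\cdots B^{q_t}A^{p_1}\cdots$ and so do the wrap-around ends, so no syllables merge — with $A$-exponents the list $p_1,\dots,p_t$ repeated $k$ times and $B$-exponents the list $q_1,\dots,q_t$ repeated $k$ times. Comparing it, as a cyclic word, with $\phi(W)=A^{\varepsilon n_1}B^{\delta m_1}\cdots A^{\varepsilon n_l}B^{\delta m_l}$ (which is also cyclically reduced in alternating form) forces $l=kt$ and identifies the multiset $\{\varepsilon n_1,\dots,\varepsilon n_l\}$ with $k$ copies of $\{p_1,\dots,p_t\}$ and $\{\delta m_1,\dots,\delta m_l\}$ with $k$ copies of $\{q_1,\dots,q_t\}$. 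In the first case this says that after replacing $A$ by $A^{\varepsilon}$ and $B$ by $B^{\delta}$ the word $W$ has all $A$-exponents equal to $1$ and all $B$-exponents in $\{e,e+1\}$; the second case is symmetric. This is the asserted conclusion, and for $k=1$ the argument simply recovers \cite{CMZ81}.

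I expect the only delicate point to be the bookkeeping in the third paragraph: keeping straight that the sign changes chosen for $V$ must be applied simultaneously to $W$, that ``cyclic conjugate'' is transitive and is preserved both by the sign-change automorphism $\phi$ and by taking $k$-th powers, and that two cyclically reduced words in alternating syllable form which are cyclic conjugates have literally the same cyclic sequence of syllables. Once the cyclic-word viewpoint is fixed, each of these is routine.
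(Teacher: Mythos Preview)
The paper does not prove this theorem: it is stated with the citation \cite{CMZ81} and described just above as ``the following minor generalization of a result of Cohen, Metzler, and Zimmerman,'' with no argument supplied. So there is no proof in the paper to compare against; you are supplying the missing justification for why the proper-power clause follows from the primitive case in \cite{CMZ81}.

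Your derivation is correct and is exactly the kind of ``minor generalization'' the paper has in mind. The essential content is the observation that if a cyclically reduced primitive $V_0$ already satisfies the conclusion (all $A$-exponents equal to $1$ and all $B$-exponents in $\{e,e+1\}$, or the symmetric case), then so does the cyclically reduced word $V_0^k$, since concatenating copies of $V_0$ introduces no syllable mergers at the seams. Everything else---reducing $V$ to a cyclically reduced conjugate, ruling out $V$ conjugate to $A^{\pm1}$ or $B^{\pm1}$ because $W$ involves both letters, and transporting the sign changes from $V$ to $W$ via the automorphism $\phi$---is straightforward bookkeeping with cyclic words, and you handle it correctly. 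One small point of notation: as written, it is momentarily ambiguous whether $V_0$ denotes a cyclic conjugate of $V$ or of $\phi(V)$; your line ``$\phi(V_0)^k=(A^{p_1}B^{q_1}\cdots A^{p_t}B^{q_t})^k$'' only parses if $V_0$ is a cyclic conjugate of $V$ and the displayed exponents $p_i,q_j$ are those of $\phi(V_0)$. That is clearly what you intend, but it would read more smoothly if you fixed one convention explicitly.
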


\begin{proof}[The proof of Theorem~\emph{\ref{main theorem4}}]

In order to find all possible R-R diagrams of a proper power curve $\beta$, we consider the following cases separately.
\begin{enumerate}
\item $\beta$ has a 0-connection in at least one of the handles in its R-R diagram.
\item $\beta$ has no 0-connections in either handle in its R-R diagram.
\end{enumerate}

The case where $\beta$ has a $0$-connection in one handle gives restriction to other curves in $\partial H$ disjoint from $\beta$. In other words, if
$\gamma$ is a simple closed curve in $\partial H$ disjoint from $\beta$, then $\gamma$ must have only bands of connections labeled by $0$ or $1$ in
that handle. Therefore we put this case into one type of possible proper power curves, which gives Type I in Theorem~\ref{main theorem}.

Now we assume that $\beta$ has no 0-connections in either handle.

Suppose $\beta$ has only one generator in $\pi_1(H)=F(A, B)$. Then up to replacement of $A$ with $A^{-1}$, $B$ with $B^{-1}$, or exchange of $A$ and $B$, we may
assume that $[\beta]=B^s$ for some $s>1$. Since $\beta$ has no 0-connections, this implies that
$\beta$ has no connections in the $A$-handle and only one connection in the $B$-handle.
Thus this case gives rise to Type II of a proper power curve in Theorem~\ref{main theorem3} with an R-R diagram of the form shown in
Figure~\ref{PPower8}.

Now suppose that $\beta$ has the two generators $A$ and $B$ in $\pi_1(H)$. By Theorem~\ref{recognizing primitives and proper powers},
we may assume that $\beta = AB^{m_1} \cdots AB^{m_l}$, where $\{m_1, \dots ,m_l\} \subseteq \{s, s+\epsilon\}$
with $\epsilon=\pm1$ and min$\{s, s+\epsilon\}>0$. This implies that every connection in the $A$-handle is labeled by
1. There are two cases to consider:
\begin{itemize}
\item [(1)] $\beta$ has only one band of 1-connections in the $A$-handle,
\item [(2)] $\beta$ has two bands of 1-connections in the $A$-handle.
\end{itemize}

\begin{figure}[t]
\centering
\includegraphics[width = 0.7\textwidth]{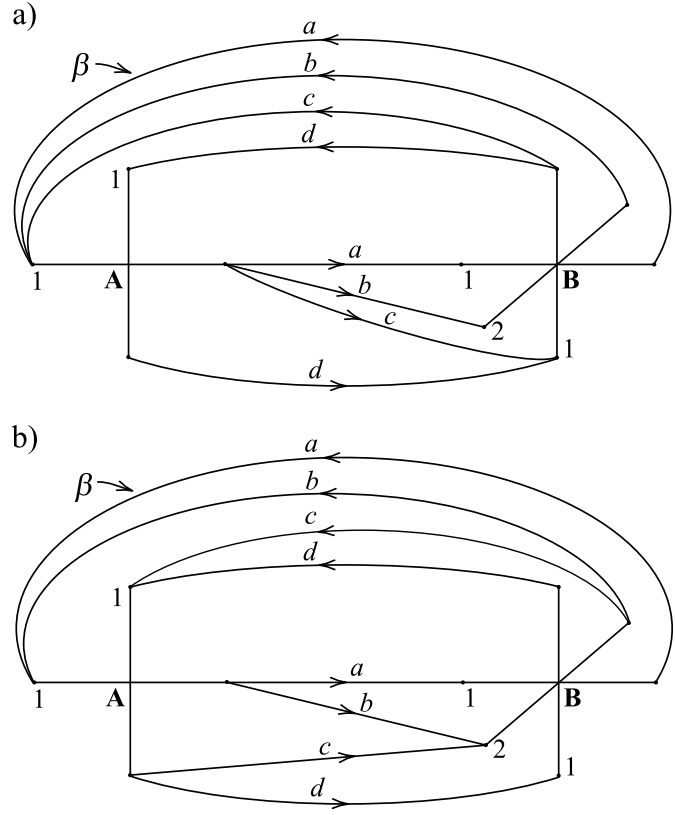}
\caption{An R-R diagram of $\beta$ when there are three bands of connections in the $B$-handle.}
\label{PPower11}
\end{figure}

\textbf{Case (1)}: Assume that $\beta$ has only one band of 1-connections in the $A$-handle.

Suppose $\{m_1, \dots ,m_l\} \subsetneq \{s, s+\epsilon\}$. Without loss of
generality, we may assume that $\{m_1, \dots ,m_l\} = \{s\}$. If $s>1$, then there must be only one band of $s$-connections
in the $B$-handle, in which case $\beta$ is a primitive curve with $[\beta]=AB^s$. Therefore $s=1$ and there
must be two bands of 1-connections in the $B$-handle. Then with $A$ and $B$ exchanged, this case belongs to Type III
of a proper power curve in Theorem~\ref{main theorem3} with an R-R diagram of the form shown in Figure~\ref{PPower10}.

Suppose $\{m_1, \dots ,m_l\} = \{s, s+\epsilon\}$. Then $\beta$ has at least two bands of connections
in the $B$-handle. If $\beta$ has two bands of connections in the $B$-handle, $\beta$ must have an R-R diagram
of the form shown in Figure~\ref{PPower5} and by Lemma~\ref{Notppower1}, $\beta$ is not a proper power
curve. If $\beta$ has three bands of connections in the $B$-handle, then these three bands of connections should be labeled by
1, 2 and 1 respectively, and $\beta$ has the R-R diagram of the form shown in Figure~\ref{PPower7}. By Lemma~\ref{Notppower2},
$\beta$ is not a proper power curve.

\textbf{Case (2)}: $\beta$ has two bands of 1-connections in the $A$-handle.

Suppose $\{m_1, \dots ,m_l\} \subsetneq \{s, s+\epsilon\}$. Without loss of
generality, we may assume that $\{m_1, \dots ,m_p\} = \{s\}$. Then if $\beta$ has only one band of connections in the $B$-handle,
then the band must be labeled by $s$, in which case Type III of a proper power curve
in Theorem~\ref{main theorem3} arises with an R-R diagram of the form shown in Figure~\ref{PPower10}.
If $\beta$ has two bands of connections in the $B$-handle, then $s$ must be 1 and this case yields Type IV of
a proper power curve in Theorem~\ref{main theorem3} with an R-R diagram of the form shown in Figure~\ref{PPower10-1}.

Suppose $\{m_1, \dots ,m_l\} = \{s, s+\epsilon\}$. If there are only two bands of connections
in the $B$-handle, then $\beta$ must have R-R diagram of the form shown in Figure~\ref{PPower3}
and by Lemma~\ref{Notppower3}, $\beta$ is not a proper power curve.

If there are three bands of connections in the $B$-handle, then $\beta$ has two types of R-R diagrams as shown in Figure~\ref{PPower11}.
By Lemma~\ref{Notppower4}, the R-R diagram of $\beta$ in Figure~\ref{PPower11}a cannot be a proper power curve. Therefore
we have the R-R diagram of $\beta$ in Figure~\ref{PPower11}b, which gives Type V of a proper power curve
in Theorem~\ref{main theorem3}.

 Thus we complete the proof of Theorem~\ref{main theorem3}
\end{proof}

\begin{lem}\label{Notppower1}
Suppose a simple closed curve $\beta$ has an R-R diagram with the form shown in Figure~\emph{\ref{PPower5}} where $s>0, \epsilon=\pm1$ with min$\{s, s+\epsilon\}>0$, and $a,b>0$.
Then $\beta$ is a primitive curve.
\end{lem}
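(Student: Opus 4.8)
The plan is to read the element $[\beta]\in\pi_1(H)=F(A,B)$ off the R-R diagram in Figure~\ref{PPower5} and then to simplify that diagram by handle slides until $\beta$ meets a cutting disk of $H$ in a single point; primitivity then follows from Lemma~\ref{primitivity}. From the hypotheses the $A$-handle carries a single band of $1$-connections and the $B$-handle carries exactly two bands, labelled $s$ and $s+\epsilon$ with $\min\{s,s+\epsilon\}>0$, so after orienting $\beta$ one obtains $[\beta]=AB^{m_1}AB^{m_2}\cdots AB^{m_l}$ with every $m_i\in\{s,\,s+\epsilon\}$, each of the two values actually occurring (say $a$ times and $b$ times, so $l=a+b$ and $a,b>0$), and with the cyclic order of the $m_i$ forced by the requirement that $\beta$ be a single simple closed curve. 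Since $\beta$ is connected, its class in $H_1(H)\cong\Z^2$ is primitive; computing that class gives $\gcd\bigl(a+b,\,(a+b)s+b\epsilon\bigr)=\gcd(a,b)=1$, which in particular shows $\beta$ is not a proper power in $\pi_1(H)$.

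Next I would slide the $A$-handle over the $B$-handle, i.e. apply the homeomorphism of $H$ inducing the automorphism $A\mapsto AB^{-s}$, $B\mapsto B$. On the diagram this replaces the two $B$-labels $s$ and $s+\epsilon$ by $0$ and $\epsilon=\pm1$; the $0$-band is no longer a genuine connection, so the $B$-handle of the new diagram carries at most one band of connections, which after possibly replacing $B$ by $B^{-1}$ is a band of $1$-connections, and $[\beta]$ becomes a cyclic word equivalent to $A^{a}\bigl(AB\bigr)^{b}$ in the induced pattern. I would then slide the $B$-handle over the $A$-handle to absorb the $B^{\pm1}$'s, lowering the multiplicities in the $A$-handle, and iterate. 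Because the two original $B$-labels differ by exactly $1$, the Euclidean/continued-fraction process driving these slides terminates after only a couple of steps, and the coprimality $\gcd(a,b)=1$ guarantees it ends with $\beta$ carrying a single band of $1$-connections in one handle and nothing in the other — that is, with the standard diagram of Figure~\ref{PSFFig1aa1}, in which $\beta$ is transverse to a properly embedded disk of $H$ in one point. By Lemma~\ref{primitivity}, $\beta$ is primitive.

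The main obstacle is the bookkeeping that keeps the reduction honest: one must check that each handle slide is realized by a homeomorphism of $H$, that after a slide the family of arcs is still the R-R diagram of a \emph{single simple} closed curve (in particular that the $0$-band is genuinely absorbed and the surviving bands merge as expected), and — crucially — that a suitable complexity, for instance the geometric intersection number of $\beta$ with the union of the two cutting disks, strictly decreases at each step so that the process cannot cycle. The hypothesis $\min\{s,s+\epsilon\}>0$ is exactly what prevents the slides from creating cancelling $B^{-1}$'s that would spoil this monotonicity, and the fact that the two $B$-labels are consecutive integers is what makes the termination immediate. A shorter but less self-contained alternative would be to observe that $[\beta]$ is cyclically reduced of the shape singled out in Theorem~\ref{recognizing primitives and proper powers}, is not a proper power by the homology computation above, and then to invoke the converse analysis in \cite{CMZ81} to conclude that $\beta$ belongs to a basis of $F(A,B)$.
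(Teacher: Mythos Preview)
Your overall strategy---apply the automorphism $A\mapsto AB^{-s}$, observe that the new diagram has the same shape with smaller parameters, and iterate---is exactly the paper's approach. The gap is in the step where you describe the outcome of the first slide. You assert that after $A\mapsto AB^{-s}$ the word becomes ``a cyclic word equivalent to $A^{a}(AB)^{b}$''; but in the basis $\{A,C\}$ with $C=AB$ this is $A^{a}C^{b}$, which is \emph{not} primitive once $a,b>1$ (for instance $A^{2}C^{3}$ is not part of any basis of $F(A,C)$), so your description cannot be correct and, taken literally, would contradict the lemma. The slide does not merely relabel the $B$-handle while leaving the $A$-handle untouched: when the ``$0$-band'' is absorbed, the single $1$-band on the $A$-handle is forced to split into two bands with consecutive labels. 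Concretely, writing $b=\rho a+\eta$ with $0\le\eta<a$, the R-R diagram forces $\beta$ to be a product of the two blocks $AB^{s+\epsilon}(AB^{s})^{\rho}$ and $AB^{s+\epsilon}(AB^{s})^{\rho+1}$, appearing $a-\eta$ and $\eta$ times respectively; under $A\mapsto AB^{-s}$ these become $A^{\rho+1}B^{\epsilon}$ and $A^{\rho+2}B^{\epsilon}$, so the new diagram is again of the form of Figure~\ref{PPower5} with the roles of $A$ and $B$ exchanged and the smaller weight pair $(a-\eta,\eta)$. This is the bookkeeping you flagged as the main obstacle, and it is where the Euclidean algorithm genuinely enters---on the pair $(a,b)$, not on the pair $(s,s+\epsilon)$.

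Your proposed shortcut via Theorem~\ref{recognizing primitives and proper powers} does not work as stated: that theorem gives only a necessary condition for primitivity, and \cite{CMZ81} does not supply a converse that every cyclically reduced word of the indicated shape is primitive or a proper power of a primitive (indeed $A^{2}(AB)^{3}$ has the shape but is neither). So the inductive diagram argument really is needed.
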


\begin{figure}[t]
\centering
\includegraphics[width = 0.6\textwidth]{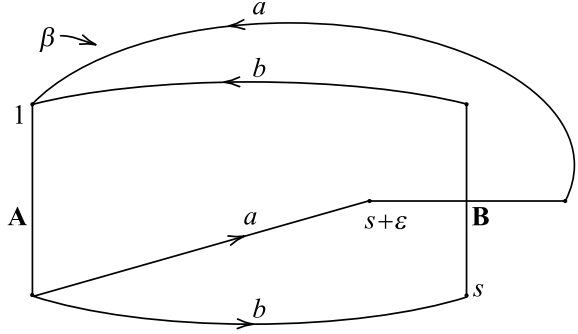}
\caption{An R-R diagram of $\beta$ where $s>0, \epsilon=\pm1$, and $a,b>0$.}
\label{PPower5}
\end{figure}

\begin{proof}
Note that since $\beta$ is a simple closed curve, gcd$(a,b)=1$. Let $b=\rho a+\eta$, where
$\rho\geq 0$ and $0\leq \eta<a$ (if $\eta=0$, then $\rho>0$ and $a=1$).
Now we record the curve $\beta$ algebraically by starting the $a$ parallel arcs, i.e., the band of width $a$, entering into the
$(s+\epsilon)$-connection in the $B$-handle. It follows that $\beta$ is the product
of two subwords $AB^{s+\epsilon}(AB^s)^{\rho}$ and $AB^{s+\epsilon}(AB^s)^{\rho+1}$ with $|AB^{s+\epsilon}(AB^s)^{\rho}|=a-\eta$
and $|AB^{s+\epsilon}(AB^s)^{\rho+1}|=\eta$. Here, for example, $|AB^{s+\epsilon}(AB^s)^{\rho}|$ denotes the total number of appearances of $AB^{s+\epsilon}(AB^s)^{\rho}$ in the word of $\beta$ in $\pi_1(H)=F(A,B)$.

There is a change of cutting disks of the handlebody
$H$, which induces an automorphism of $\pi_1(H)$
that takes $A \mapsto AB^{-s}$ and leaves $B$ fixed. Then by this change of cutting disks,
$AB^{s+\epsilon}(AB^s)^{\rho}$ and $AB^{s+\epsilon}(AB^s)^{\rho+1}$ are sent to $A^{\rho+1}B^{\epsilon}$ and $A^{\rho+2}B^{\epsilon}$
respectively. Therefore the resulting Heegaard diagram of $\beta$ realizes a new R-R diagram of the form
in Figure~\ref{PPower6}, where the positions of the $A$ and $B$-handles are switched.
The new R-R diagram has the same form as that in Figure~\ref{PPower5} with less number of
arcs. One can continue inductively until one of the labels of parallel arcs is 0,
in which case $[\beta]=AB^j$ for some $j> 0$ up to replacement of $A$ with $A^{-1}$, $B$ with $B^{-1}$, or exchange of $A$ and $B$.
Such a curve $\beta$ is a primitive curve.
\end{proof}

\begin{figure}[t]
\centering
\includegraphics[width = 0.6\textwidth]{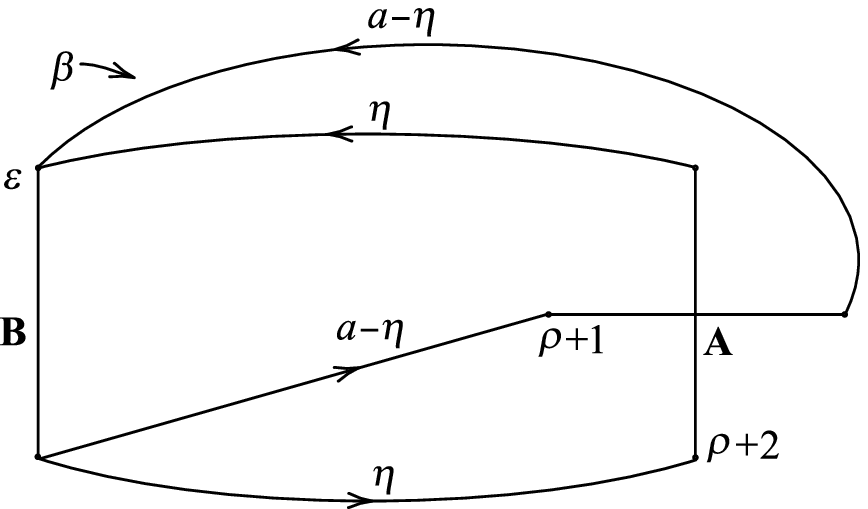}
\caption{The R-R diagram after change of cutting disks of the handlebody
$H$ inducing an automorphism of $\pi_1(H)$
that takes $A \mapsto AB^{-s}$ and leaves $B$ fixed.}
\label{PPower6}
\end{figure}
%%%%%%%%%%%%%%%%%%%%%%%%%%%%%%%%%%%%%%%%%%%%%%%%%%%%%%%%%%%%%%%%%%%%%%%%%%%%%%%%%%%%%%%%%%%%%%%%%%%%%%%%%%%%%%%%%%%%%%%%%%%%%%%%%%%%
\begin{lem}\label{Notppower2}
Suppose a simple closed curve $\beta$ has an R-R diagram with the form shown in Figure~\emph{\ref{PPower7}} where $a,b,c >0$.
Then $\beta$ is not a proper power curve.
\end{lem}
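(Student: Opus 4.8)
The plan is to run the same strategy as in the proof of Lemma~\ref{Notppower1}: transcribe $\beta$ as a word in $\pi_1(H)=F(A,B)$, repeatedly apply changes of cutting disks to cut down the number of arcs, and read off the conclusion from the resulting base cases.

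First I would record $\beta$ algebraically from Figure~\ref{PPower7}. Since there is a single band of $1$-connections in the $A$-handle and three bands labeled $1,2,1$ in the $B$-handle, tracing the curve produces, up to cyclic permutation and the permitted automorphisms, a word $[\beta]=AB^{m_1}AB^{m_2}\cdots AB^{m_l}$ with every $m_i\in\{1,2\}$, where the placement of the exponents equal to $2$ is governed by the band widths $a,b,c$; this transcription is where care is needed. Then, by Lemma~\ref{proper power}, $\beta$ is a proper power curve exactly when $[\beta]$ is conjugate to $w^n$ for some free generator $w$ and some $n>1$ --- equivalently, since in a free group the cyclically reduced form of $g^n$ is the $n$-th power of the cyclically reduced form of $g$, exactly when the cyclically reduced word $[\beta]$ is $n$-periodic for some $n>1$. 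Invoking Theorem~\ref{recognizing primitives and proper powers} at this point only confirms that its necessary condition holds (all $A$-exponents are $1$, all $B$-exponents lie in $\{1,2\}$), so it does not by itself exclude that $\beta$ is a proper power; a finer argument is needed.

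To rule out the proper power case I would use the change of cutting disks of $H$ inducing the automorphism $A\mapsto AB^{-1}$, $B\mapsto B$, exactly as in Lemma~\ref{Notppower1}. It sends $AB\mapsto A$ and $AB^2\mapsto AB$, hence strictly decreases the number of arcs in the R-R diagram, and either returns a diagram of the same combinatorial shape with fewer arcs (so the induction continues) or terminates at an explicit base case. The base cases are: words $A^pB^q$ with $\gcd(p,q)=1$, in particular $AB^{j}$, which are primitive; words of the form covered by Lemma~\ref{Notppower1}, which are primitive; or words whose set of $A$-exponents (or of $B$-exponents) is not contained in any $\{e,e+1\}$, which violate the necessary condition of Theorem~\ref{recognizing primitives and proper powers} and hence are neither primitive nor proper powers. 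In every case the terminal word --- and therefore $[\beta]$, since being a proper power curve does not depend on the choice of cutting disks --- is not a proper power, which gives the lemma. The main obstacle is the bookkeeping in this reduction: one must treat separately the finitely many cyclic arrangements of the three bands around the $B$-handle, track how $a,b,c$ evolve, handle the degenerate subcases in which an induced width drops to $0$, and confirm that a strictly decreasing complexity measure survives every case. Obtaining the correct starting word from Figure~\ref{PPower7} and organizing this finite case analysis is where essentially all the work lies; the group-theoretic inputs are routine once it is set up.
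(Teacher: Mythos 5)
Your opening moves are sound and match the paper's setup: the transcription $[\beta]=AB^{m_1}\cdots AB^{m_l}$ with $m_i\in\{1,2\}$ is correct, and you rightly observe that Theorem~\ref{recognizing primitives and proper powers} applied in the basis $\{A,B\}$ is inconclusive, so one must pass to a new basis (your automorphism $A\mapsto AB^{-1}$ is equivalent to the paper's choice of regarding $\{AB,AB^2\}$ as a basis, since it sends $AB\mapsto A$ and $AB^2\mapsto AB$). But from there the proposal has a genuine gap: every step that actually proves the lemma is deferred to ``bookkeeping,'' and that bookkeeping is not routine --- it \emph{is} the lemma. The paper needs no iteration at all. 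Writing $x=AB$, $y=AB^2$, the diagram forces $x$ to occur with exponent at least $2$ (read $\beta$ from the right-hand edge of the width-$a$ band entering the $1$-connection), so Theorem~\ref{recognizing primitives and proper powers} already forces every $y$-exponent to equal $1$ if $\beta$ is to be a proper power. If $b=1$ the word is then primitive. If $b>1$, the width-$b$ band must split at the $A$-handle into two subbands feeding distinct bands of the $B$-handle, and tracing them produces subwords $AB^2(AB)^n$ and $AB^2(AB)^{n+e}$ with $e>1$; these two runs cannot both lie in any set $\{e',e'+1\}$, so the CMZ condition fails in one step. This band-splitting fact is the entire content of the lemma, and nothing in your proposal establishes it or any substitute for it.

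Because that fact is missing, your induction is not known to terminate only in your three listed base cases. Your list (words $A^pB^q$; words of Lemma~\ref{Notppower1} type; words violating the CMZ condition) silently excludes terminal configurations such as $B^s$ or $(AB)^n$, which \emph{are} proper powers and which could a priori arise when an induced width drops to $0$ during the reduction; ruling them out is exactly equivalent to proving the lemma, so the assertion ``in every case the terminal word is not a proper power'' is circular as written. The repair is to abandon the open-ended iteration: a single application of Theorem~\ref{recognizing primitives and proper powers} in the basis $\{AB,AB^2\}$ suffices, provided you first extract from Figure~\ref{PPower7} the consecutive occurrence of $AB$ and, for $b>1$, the splitting of the width-$b$ band described above.
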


\begin{figure}[t]
\centering
\includegraphics[width = 0.6\textwidth]{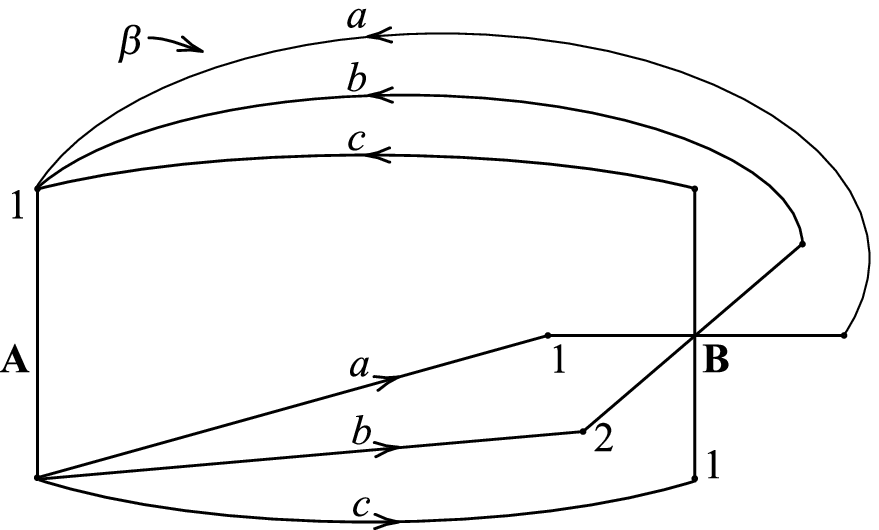}
\caption{An R-R diagram of $\beta$ where $a,b,c>0$.}
\label{PPower7}
\end{figure}

\begin{proof}
First, note that gcd$(a+b, b+c)=1$ in order for $\beta$ to be a simple closed curve.
Since $AB$ and $AB^2$ appear in $[\beta]$, by considering $\{AB, AB^2\}$ as a generating set of $F(A, B)$ and
by Theorem~\ref{recognizing primitives and proper powers} one of the two must appear with exponent $1$.
However, from the R-R diagram one can see that if one reads $\beta$ from the right-hand edge
of the band of width $a$ entering the 1-connection in the $B$-handle, then $AB$ appears twice consecutively. Therefore $AB^2$ must
have only exponent $1$.

If $b=1$, then $AB^2$ appears only once and thus $\beta$ is a primitive curve.
Suppose $b>1$. Consider following the band of width $b$ around the R-R diagram.
Since $\beta$ is a simple closed curve, this band must split into two subbands at the endpoint
of the $(-1)$-connection in the $A$-handle. If the band splits into two subbands belonging
to the bands of width $a$ and $b$ respectively, then by tracing arcs
of this band we see that $\beta$ has two subwords $AB^2(AB)^n$ and $AB^2(AB)^{n+e}$ for some
positive integers $n$ and $e$ with $e>1$. It follows from Theorem~\ref{recognizing primitives and proper powers} that $\beta$ cannot be a proper power curve.
Similarly for the case where the band splits into two subbands belonging
to the bands of width $b$ and $c$ respectively.
\end{proof}

%%%%%%%%%%%%%%%%%%%%%%%%%%%%%%%%%%%%%%%%%%%%%%%%%%%%%%%%%%%%%%%%%%%%%%%%%%%%%%%%%%%%%%%%%%%%%%%%%%%%%%%%%%%%%%%%%%%%%%%%%%%%%%%%%%%%

\begin{lem}\label{Notppower3}
Suppose a simple closed curve $\beta$ has an R-R diagram with the form shown in Figure~\emph{\ref{PPower3}} where $s>0, \epsilon=\pm1$ with min$\{s, s+\epsilon\}>0$, and $a,b,c>0$.
Then $\beta$ is not a proper power curve.
\end{lem}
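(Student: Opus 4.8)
\textbf{Proof plan for Lemma~\ref{Notppower3}.}
The plan is to follow the same strategy that worked in Lemmas~\ref{Notppower1} and \ref{Notppower2}: read $\beta$ off the R-R diagram as a cyclic word in $F(A,B)$, observe that two consecutive letters $A$ (or two consecutive copies of the syllable $AB^s$) are forced somewhere along $\beta$, and then invoke Theorem~\ref{recognizing primitives and proper powers} to conclude that $\beta$ cannot be a proper power. First I would record the word. In Figure~\ref{PPower3} the curve has two bands of $1$-connections in the $A$-handle and two bands of connections in the $B$-handle, labeled $s$ and $s+\epsilon$, with the $a$, $b$, $c$ widths distributed among them. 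Since $\beta$ is a simple closed curve the relevant widths are coprime, so exactly one of the $B$-bands must split at one of the $A$-connections; tracing the arcs through that split shows $\beta$ contains subwords of the form $(AB^{s})\cdots$ and $(AB^{s+\epsilon})\cdots$ but also, crucially, a place where the same $A$-syllable block is repeated, i.e. $AB^s$ appears twice consecutively (equivalently $A^2$ appears after the change of cutting disk $A\mapsto AB^{-s}$).

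The second step is to pass to the generating set $\{AB^s,\,AB^{s+\epsilon}\}$ of $F(A,B)$ — or, what amounts to the same thing, apply the automorphism $A\mapsto AB^{-s}$, $B\mapsto B$ of $\pi_1(H)$ that we already used in Lemma~\ref{Notppower1} — so that $\beta$ becomes a word in two new generators $X$ and $Y$ where each syllable is $X$ or $XY^{\pm1}$ up to relabeling. Applying Theorem~\ref{recognizing primitives and proper powers} in these coordinates, a proper power would force all $X$-exponents equal to $1$ and all $Y$-exponents to lie in $\{e,e+1\}$ for some $e>0$ (or the symmetric statement). The third step is to derive a contradiction with the structure forced by the two $1$-connections in the $A$-handle in Figure~\ref{PPower3}: the way the two $A$-bands reconnect the $B$-bands produces, after the change of coordinates, either an $X$-exponent $\geq 2$ or a $Y$-exponent gap $\geq 2$ — exactly the obstruction that appeared in Lemma~\ref{Notppower2}. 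Depending on which of the two $B$-bands splits there are two subcases, handled identically by symmetry, as in the final sentence of the proof of Lemma~\ref{Notppower2}.

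I expect the main obstacle to be purely bookkeeping: writing down the cyclic word precisely from Figure~\ref{PPower3}, keeping track of how the bands of widths $a$, $b$, $c$ match up at the four connection points and which subband splits where, so that the "repeated syllable" or "gap $\geq 2$" phenomenon is made rigorous rather than merely plausible from the picture. Once the word is in hand, the invocation of Theorem~\ref{recognizing primitives and proper powers} is immediate. A secondary point to check is that the degenerate cases ($b=1$, or one of the split subbands having width zero) either reduce to a primitive curve — which is still not a proper power, so the conclusion holds — or are excluded by the hypothesis $a,b,c>0$ together with the coprimality forced by $\beta$ being simple; I would dispose of these first, exactly as the $b=1$ case is dispatched at the start of the proof of Lemma~\ref{Notppower2}.
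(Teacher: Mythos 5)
Your opening move is the paper's: the change of cutting disks inducing $A\mapsto AB^{-s}$ (the paper performs it on the hybrid diagram of Figure~\ref{PPower4}) is exactly how the published proof begins. The gap is in your third step. You assert that after this change of coordinates the rewritten word always violates the condition of Theorem~\ref{recognizing primitives and proper powers} --- an exponent $\geq 2$ where only $1$'s are allowed, or an exponent gap $\geq 2$ --- so that one application of that theorem finishes the proof. It does not. The paper sorts the transformed diagram by the number of bands of connections in the new $A$-handle (one, two, or three) and notes that since $b+c>0$ all labels are positive with at least one exceeding $1$. In the surviving cases the word \emph{does} satisfy the Cohen--Metzler--Zimmermann form: two bands whose labels are forced to differ by exactly $1$, or three bands with labels $(1,2,1)$. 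Because that condition is merely necessary and is satisfied by primitives and proper powers alike, it cannot separate the two, and a further argument is required: the paper invokes Lemma~\ref{Notppower1} to show the two-band curve is in fact primitive (via an iterated Euclidean-algorithm descent through further changes of cutting disks), and Lemma~\ref{Notppower2} to show the $(1,2,1)$ curve is not a proper power (by exhibiting, one level deeper, subwords $AB^2(AB)^n$ and $AB^2(AB)^{n+e}$ with $e>1$, which genuinely violate the theorem). Your plan has no substitute for these two reductions, and without them it stalls precisely in the cases that matter.

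Two smaller points. First, your instinct that ``primitive, hence not a proper power'' is an acceptable conclusion is correct, but you deploy it only for degenerate cases ($b=1$, width-zero subbands); in the actual proof it is the resolution of a \emph{main} case (the two-band case, via Lemma~\ref{Notppower1}). Second, the single-band case, which you do not mention, also occurs and yields $\beta=B^{\epsilon}A^{b+c+1}$, again primitive. So the correct architecture is not ``one coordinate change, then a CMZ contradiction,'' but ``one coordinate change, then a three-way case split that reduces to the two previously established lemmas.''
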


\begin{figure}[t]
\centering
\includegraphics[width = 0.7\textwidth]{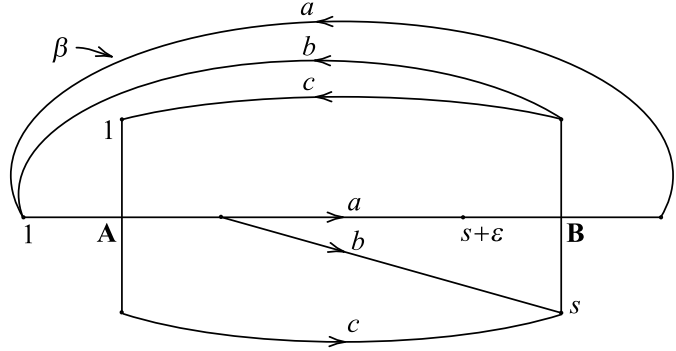}
\caption{An R-R diagram of $\beta$ where $s>0, \epsilon=\pm1$, and $a,b,c>0$.}
\label{PPower3}
\end{figure}

\begin{figure}[t]
\centering
\includegraphics[width = 1\textwidth]{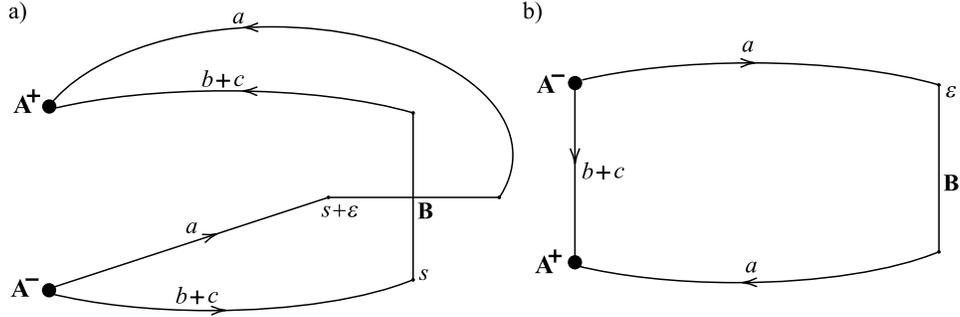}
\caption{The hybrid diagram of $\beta$ and change of cutting disks of $H$.}
\label{PPower4}
\end{figure}

\begin{proof}
We use the argument of hybrid diagrams which are introduced in \cite{K20}.
Consider the corresponding hybrid diagram as shown in Figure~\ref{PPower4}a.
Then we drag the vertex \textbf{A}$^{\boldsymbol{-}}$ together with
the edges meeting the vertex \textbf{A}$^{\boldsymbol{-}}$ over the $s$-connection in the $B$-handle. This corresponds to the
change of the cutting disks of $H$ inducing an automorphism of $\pi_1(H)$ which takes $A\mapsto AB^{-s}$ and leaves $B$ fixed. The resulting hybrid diagram is shown
in Figure~\ref{PPower4}b.

Transforming the hybrid diagram in Figure~\ref{PPower4}b back into an R-R diagram, there are three possible cases to consider as follows:

\begin{enumerate}
\item There is only one band of connections in the $A$-handle;
\item There are only two bands of connections in the $A$-handle;
\item There are three bands of connections in the $A$-handle.
\end{enumerate}

Note from Figure~\ref{PPower4}b that since $b+c>0$, all of the labels of bands of connections in the $A$-handle are greater than $0$ and
at least one of the bands of connections has label greater than $1$.

(1) Suppose that there is only one band of connections in the $A$-handle. Then $a$ must be 1 and $\beta=B^{\epsilon}A^{b+c+1}$, which is primitive.

(2) Suppose that there are only two bands of connections in the $A$-handle. Let $p$ and $q$ be the labels of the two bands of
connections. Since $b+c>0$ and thus at least one of the bands of connections has label greater than $1$, $p\neq q$.
Theorem~\ref{recognizing primitives and proper powers} forces $|p-q|$ to be $1$. However, by Lemma~\ref{Notppower1} such a curve $\beta$
is a primitive curve.

(3) Suppose that there are three bands of connections in the $A$-handle. Let $p, q,$ and $r$ be the labels of the three bands of
connections with $q=p+r$. In order for $\beta$ to be a proper power, $(p,q,r)=(1,2,1)$. However, by Lemma~\ref{Notppower2}, $\beta$ is not a proper power curve.
\end{proof}

%%%%%%%%%%%%%%%%%%%%%%%%%%%%%%%%%%%%%%%%%%%%%%%%%%%%%%%%%%%%%%%%%%%%%%%%%%%%%%%%%%%%%%%%%%%%%%%%%%%%%%%%%%%%%%%%%%%%%%%%%%%%%%%%%%%%

\begin{lem}\label{Notppower4}
Suppose a simple closed curve $\beta$ has an R-R diagram with the form shown in Figure~\emph{\ref{PPower1}} where $a,b,c,d>0$.
Then $\beta$ is not a proper power curve.
\end{lem}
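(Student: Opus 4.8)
The plan is to compute the word $[\beta]\in\pi_1(H)=F(A,B)$ carried by the R-R diagram of Figure~\ref{PPower1} and to show, using Theorem~\ref{recognizing primitives and proper powers} together with Lemmas~\ref{Notppower1}, \ref{Notppower2} and \ref{Notppower3}, that $[\beta]$ is not (conjugate to) a proper power of a member of a basis of $F(A,B)$; by Lemma~\ref{proper power}(2) this is exactly the assertion that $\beta$ is not a proper power curve. Since the $A$-handle in Figure~\ref{PPower1} carries two bands of $1$-connections, every $A$-exponent of $[\beta]$ is $1$, so $[\beta]$ is cyclically of the form $AB^{m_1}\cdots AB^{m_l}$. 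Because $\beta$ is a simple closed curve, the three band widths in the $B$-handle obey the usual coherence relation (the middle label equals the sum of the outer two), and the constraint of Theorem~\ref{recognizing primitives and proper powers} that the $m_i$ take at most two consecutive values then pins the $B$-handle labels to $1,2,1$. Hence $[\beta]$ is cyclically a positive word in $AB$ and $AB^2$ in which both blocks genuinely occur.

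Next, as in the proof of Lemma~\ref{Notppower2}, I would pass to the basis $\{X,Y\}=\{AB,AB^2\}$ of $F(A,B)$ and rewrite $[\beta]$ as a positive word in $X$ and $Y$; equivalently, on the diagram side I would perform the change of cutting disks realizing the automorphism fixing $B$ and sending $A\mapsto AB^{-1}$, carried out via the hybrid diagram exactly as in Lemma~\ref{Notppower3}. Tracing the four bands of widths $a,b,c,d$ around Figure~\ref{PPower1}, I expect one of the following to occur: the resulting diagram has a $0$-connection, so $[\beta]=AB^{j}$ up to the allowed symmetries and $\beta$ is primitive, hence not a proper power curve; or the transformed diagram again presents three bands of connections and one reduces to Lemma~\ref{Notppower2} (or, more generally, to one of Lemmas~\ref{Notppower1}--\ref{Notppower3}); or one reads off two subwords of $[\beta]$ of the form $YX^{n}$ and $YX^{n+d}$ with $d>1$, whence Theorem~\ref{recognizing primitives and proper powers} applied in the basis $\{X,Y\}$ shows $[\beta]$ is neither primitive nor a proper power. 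In every case $\beta$ is not a proper power curve. The hypotheses $a,b,c,d>0$ enter precisely to exclude the degenerate possibilities in which $[\beta]$ involves only one of $X$, $Y$.

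The main obstacle is the combinatorial bookkeeping in the band-tracing step: one must follow carefully how the bands of widths $a,b,c,d$ split at the endpoints of the $1$-connections and recombine as one traverses Figure~\ref{PPower1}, so as to determine the run-lengths of $X=AB$ occurring between consecutive $Y=AB^2$ blocks and to verify that at least two distinct such run-lengths, differing by more than $1$, appear. This imbalance is exactly the feature distinguishing the configuration of Figure~\ref{PPower1} (the ``$\mathrm a$'' case of Figure~\ref{PPower11}) from the genuine proper-power diagram of Figure~\ref{PPower11}b, where the runs balance out; once the word has been computed the conclusion is immediate from Theorem~\ref{recognizing primitives and proper powers}.
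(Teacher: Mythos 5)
Your overall strategy---force the $B$-handle labels to $(1,2,1)$, perform a change of cutting disks via the hybrid diagram to rewrite $[\beta]$ in a new basis, and then invoke Theorem~\ref{recognizing primitives and proper powers} together with Lemmas~\ref{Notppower1}--\ref{Notppower3}---is essentially the paper's strategy (the paper drags a vertex over the $2$-connection, i.e.\ uses the automorphism $A\mapsto AB^{-2}$ fixing $B$, rather than your $A\mapsto AB^{-1}$; either is a legitimate descent move). The difficulty is that the decisive step is not carried out: you write that you \emph{expect} one of three outcomes after tracing the bands, and you explicitly defer the bookkeeping of how the bands of widths $a,b,c,d$ split and recombine as ``the main obstacle.'' That bookkeeping is the entire content of the lemma; without it the argument is a plan rather than a proof. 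In particular, your claim that at least two run-lengths differing by more than $1$ must appear is precisely what needs verification, and it is not in fact always true for this diagram.

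More seriously, the trichotomy you propose omits a case. After the change of cutting disks the transformed diagram can again have \emph{three} bands of connections in the relevant handle with labels forced (by the coherence relation and Theorem~\ref{recognizing primitives and proper powers}) to be $(1,2,1)$. This is not excluded by the CMZ criterion, since those exponents are consecutive, and it does not reduce to Lemma~\ref{Notppower2}, whose Figure~\ref{PPower7} has only a single band of $1$-connections in the other handle; instead it reproduces a diagram of the same form as Figure~\ref{PPower1} with strictly fewer arcs. The paper resolves this by a descent: since the number of edges strictly decreases under each change of cutting disks, iterating must eventually land in the one-band or two-band cases, which are then dispatched by Lemma~\ref{Notppower3} (and, in the one-band case, by a separate contradiction showing that a $1$-connection in the $A$-handle must survive the automorphism, so the single band cannot have label $p>1$). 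Your proposal needs both this induction and the explicit one-band/two-band/three-band case analysis of the transformed diagram to be complete.
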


\begin{figure}[t]
\centering
\includegraphics[width = 0.7\textwidth]{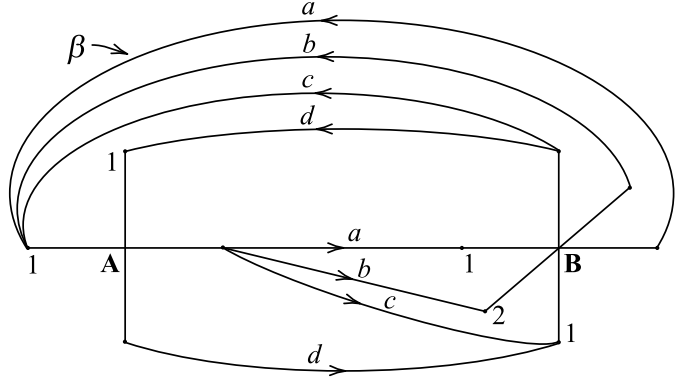}
\caption{An R-R diagram of $\beta$ with $a,b,c,d>0$.}
\label{PPower1}
\end{figure}

\begin{proof}
Consider the corresponding hybrid diagram as shown in Figure~\ref{PPower2}a.

Then we drag the vertex \textbf{A}$^{\boldsymbol{-}}$ together with
the edges meeting the vertex \textbf{A}$^{\boldsymbol{-}}$ over the $2$-connection in the $B$-handle. This corresponds to the
change of the cutting disks of $H$ inducing an automorphism of $\pi_1(H)$ which takes $A\mapsto AB^{-2}$ and leaves $B$ fixed. The resulting hybrid diagram is shown
in Figure~\ref{PPower2}b.

Transforming the hybrid diagram in Figure~\ref{PPower2}b back into an R-R diagram, there are three possible cases to consider as follows:

\begin{enumerate}
\item There is only one band of connections in the $A$-handle;
\item There are only two bands of connections in the $A$-handle;
\item There are three bands of connections in the $A$-handle.
\end{enumerate}

Note from Figure~\ref{PPower2}b that since $b>0$, all of the labels of bands of connections in the $A$-handle are greater than $0$
and at least one of the bands of connections has label greater than $1$.

(1) Suppose that there is only one band of connections in the $A$-handle. Let $p$ be the label of the band of connections.
Since there are the $b$ edges connecting \textbf{A}$^{\boldsymbol{+}}$ and \textbf{A}$^{\boldsymbol{-}}$, $p>1$. On the other hand, in Figure~\ref{PPower1}, consider chasing
back and forth the outermost arc in the band of width $a$ entering the 1-connection in the $A$-handle. This represents
$ABAB\cdots$, which is carried into $AB^{-1}AB^{-1}\cdots$ by the automorphism $A\mapsto AB^{-2}$. This implies
that $\beta$ has a 1-connection in the $A$-handle, a contradiction.

\begin{figure}[t]
\centering
\includegraphics[width = 1\textwidth]{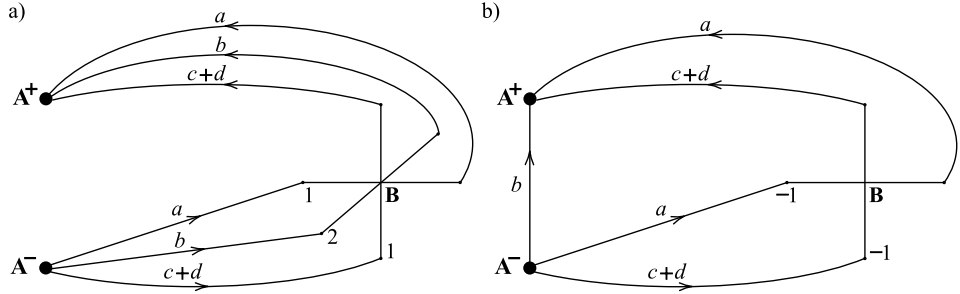}
\caption{The hybrid diagram of $\beta$ and the change of cutting disks of $H$.}
\label{PPower2}
\end{figure}

(2) Suppose that there are only two bands of connections in the $A$-handle. Let $p$ and $q$ be the labels of the two bands of
connections. By the similar argument in the proof of Lemma~\ref{Notppower3}, $|p-q|=1$. However, by Lemma~\ref{Notppower3}
such a curve $\beta$ cannot be a proper power.

(3) Suppose that there are three bands of connections in the $A$-handle. Let $p, q,$ and $r$ be the labels of the three bands of
connections with $q=p+r$. In order for $\beta$ to be a proper power, $(p,q,r)=(1,2,1)$. Then by switching the $A$-
and $B$-handles, and the signs of the labels, the R-R diagram of $\beta$
has the same form as in Figure~\ref{PPower1} with less number of the arcs connecting the $A$- and $B$-handles.
So if we can continue to perform the change of cutting disks of $H$, then since the number of the edges is strictly decreasing
under the change of cutting disks, this case must eventually belong to the cases (1) and (2).
\end{proof}

\section{R-R diagrams of Seifert-d curves}
\label{Genus two R-R diagrams of Seifert-d curves}

In this section, we classify the R-R diagrams of a simple closed curve which is Seifert-d in a genus two handlebody.
If $\alpha$ is a Seifert-d curve in a genus two handlebody $H$, then by its definition $H[\alpha]$ is a Seifert-fibered space over $D^2$ with two exceptional fibers. In order to compute the type of exceptional fibers in $H[\alpha]$, we need the following notation and lemma.

\textit{Notation}. If $U = (a,b)$ is an element of $\mathbb{Z} \oplus \mathbb{Z}$, let $U^{\perp}$ denote the element $(-b,a)$ of $\mathbb{Z} \oplus \mathbb{Z}$, and let
 `$\circ$' denote the usual \emph{inner product} or \emph{dot product} of vectors.

\begin{lem}
\label{dot product}
Let $U = (a,b)$, $V = (c,d)$ and $W = (e,f)$ be three elements of $\mathbb{Z} \oplus \mathbb{Z}$ such that
$ad-bc = \pm1$. If $W$ is expressed as a linear combination of $U$ and $V$, say $W = xU + yV$, then $y = \pm (U^{\perp} \circ W)$.
\end{lem}

\begin{proof}
We take an inner product by $U^{\perp}$ on both sides of $W = xU + yV$ . Then since $U^{\perp}\circ U=0$ and $U^{\perp}\circ V=\pm1$, the result follows.
\end{proof}

The main result of this section is the following theorem.

\begin{thm}
\label{Thm describing R-R diagrams of SF spaces over the disk}
If $\alpha$ is a nonseparating simple closed curve in the boundary of a genus two handlebody $H$ such that $H[\alpha]$ is Seifert-fibered over $D^2$ with two exceptional fibers, then $\alpha$ has an R-R diagram with the form of Figure~\emph{\ref{PSFFig2a}}\emph{a} with $n, s > 1$, or \emph{\ref{PSFFig2a}}\emph{b} with $n, s > 1$, $a, b > 0$, and $\gcd(a,b) = 1$.

Conversely, if $\alpha$ has an R-R diagram with the form of Figure~\emph{\ref{PSFFig2a}}\emph{a} with $n, s > 1$, or Figure \emph{\ref{PSFFig2a}}\emph{b} with $n, s > 1$, $a, b > 0$, and $\gcd(a,b) = 1$, then $H[\alpha]$ is Seifert-fibered over $D^2$ with two exceptional fibers of indexes $n$
and $s$ in Figure~\emph{\ref{PSFFig2a}}\emph{a} or indexes $n(a+b)+b$ and $s$ in Figure~\emph{\ref{PSFFig2a}}\emph{b}.

In addition, a curve $\beta$ shown in Figure~\emph{\ref{PSFFig2b}}, which is an augmentation of Figure~\emph{\ref{PSFFig2a}}, is a regular fiber of $H[\alpha]$.
\end{thm}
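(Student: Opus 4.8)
I would prove Theorem~\ref{Thm describing R-R diagrams of SF spaces over the disk} in two halves, following the standard strategy for such classification results.

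For the forward direction, assume $H[\alpha]$ is Seifert-fibered over $D^2$ with two exceptional fibers. The first step is to exploit the structure of such a Seifert-fibered space: $H[\alpha]$ is the union of two fibered solid tori $V_1, V_2$ (neighborhoods of the two exceptional fibers) glued along an annulus, or equivalently it is obtained from $D^2 \times S^1$ by the two exceptional fiberings. The key observation is that each $V_i$ carries a meridian disk, and the two meridian disks $D_1, D_2$ together with the cocore disk of the 2-handle attached along $\alpha$ give a handle decomposition of $H[\alpha]$. Dually, $\partial D_1$ and $\partial D_2$ pull back to a pair of disjoint essential simple closed curves $c_1, c_2$ in $\partial H$ that bound disks $E_1, E_2$ in $H$; since $\alpha$ must be isotopic into the complement of $c_1 \cup c_2$ only up to intersections controlled by the exceptional-fiber indices $n$ and $s$, I would argue that $\{E_1, E_2\}$ (or a modification obtained by band sums) forms a complete set of cutting disks for $H$ with respect to which $\alpha$ meets one handle $n$ times and the other $s$ times, each exactly as a power of a single generator on the ``inside''. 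The combinatorics of how the $n$ strands in the $A$-handle thread through the $B$-handle — and the requirement that $\alpha$ be embedded, so the relevant connection widths satisfy $\gcd(a,b)=1$ — then forces the R-R diagram into one of the two normal forms of Figure~\ref{PSFFig2a}. The condition $n,s>1$ comes from the fact that an index-$1$ ``exceptional'' fiber is regular, which would make $\alpha$ primitive (Lemma~\ref{primitivity}), contradicting that there are genuinely two exceptional fibers.

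For the converse, given $\alpha$ presented by Figure~\ref{PSFFig2a}a or b, I would compute $H[\alpha]$ directly. Cutting $H$ along the two cutting disks and tracking where $\alpha$ goes gives a presentation $\pi_1(H[\alpha]) = \langle A, B \mid r \rangle$ where $r$ is the word read off the diagram ($A^n B^{-s}$-type in case a, and the analogous word with the $(a,b)$-augmentation in case b). I would then exhibit the Seifert fibration explicitly: the curve read off as the ``vertical'' direction — precisely the curve $\beta$ of Figure~\ref{PSFFig2b} — is a regular fiber, and quotienting by it gives the disk orbifold with two cone points. To pin down the indices $n$ and $s$ (resp.\ $n(a+b)+b$ and $s$), I would use Lemma~\ref{dot product}: expressing the class of the regular fiber $\beta$ on each of the two boundary tori of the (cut-open) fibered pieces in terms of a meridian-longitude basis, the index of the corresponding exceptional fiber is the absolute value of one coordinate, which the lemma computes as $\pm(U^\perp \circ W)$ for the appropriate vectors coming from the connection labels in the diagram. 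This is where the arithmetic expression $n(a+b)+b$ in case b emerges.

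For the last sentence — that the augmented curve $\beta$ of Figure~\ref{PSFFig2b} is a regular fiber of $H[\alpha]$ — I would verify two things: first, that $\beta$ is disjoint from $\alpha$ in $\partial H$ (immediate from the augmented diagram, since the augmentation adds a parallel strand running alongside $\alpha$ in both handles), so $\beta$ persists as a curve in $\partial(H[\alpha])$; and second, that $\beta$ is isotopic in $H[\alpha]$ to a regular fiber of the fibration constructed above. The cleanest route is to observe that the Seifert fibration of $H[\alpha]$ restricts on $\partial(H[\alpha])$ to a foliation by regular fibers, and that $\beta$ — being the curve obtained from $\alpha$ by the ``add one parallel strand'' augmentation — is exactly the slope that, together with $\alpha = $ the surface slope, spans $H_1$ of the once-punctured-torus-like neighborhood; comparing with the computation of the fiber class on the boundary torus in the previous paragraph shows the slopes agree. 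Alternatively, one checks that $\beta$ is isotopic into the gluing annulus between the two fibered solid tori and is not a core of it, hence a regular fiber.

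\textbf{Main obstacle.} The hard part is the forward direction — showing that an abstract Seifert-fibered $H[\alpha]$ over $D^2$ with two exceptional fibers forces $\alpha$ into one of exactly these two R-R normal forms. The issue is that a priori $\alpha$ could have a complicated R-R diagram, and one must show via the algebra of Theorem~\ref{recognizing primitives and proper powers}-type arguments (primitivity/proper-power recognition for the ``disk'' directions) combined with the embedded-curve constraint ($\gcd$ conditions) and the requirement that the complementary curves realize the two meridians of the fibered solid tori, that no other configuration survives. Organizing the case analysis on the number of bands of connections in each handle — as in the proofs of Lemmas~\ref{Notppower1}–\ref{Notppower4} — and repeatedly applying changes of cutting disks ($A \mapsto AB^{-k}$) to reduce complexity is the technical heart of the argument; the converse and the regular-fiber claim are then comparatively routine computations.
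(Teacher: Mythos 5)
Your converse direction and your treatment of the regular fiber are essentially the paper's argument: augment the diagram with the curve $\beta$ of Figure~\ref{PSFFig2b}, use the separating annulus bounded by two parallel copies of $\beta$ to split $H$ into a genus two handlebody $W$ (in which $\alpha$ is primitive, via Lemma~\ref{Notppower1}) and a solid torus $V$, and compute the indexes with Lemma~\ref{dot product}. That half of your proposal is fine.

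The forward direction, however, has a genuine gap, and it is located exactly where you place your ``main obstacle.'' You assert that the meridian disks $D_1, D_2$ of the fibered solid tori around the two exceptional fibers, ``together with the cocore disk of the 2-handle,'' give a handle decomposition of $H[\alpha]$, and that $\partial D_1, \partial D_2$ ``pull back'' to curves on $\partial H$ bounding disks in $H$. Neither claim is justified, and the second is not even well posed: the disks $D_1, D_2$ have boundary on the tori $\partial V_1, \partial V_2$, which are \emph{interior} tori of $H[\alpha]$, not on $\partial H$, and there is no canonical isotopy carrying them to properly embedded disks of $H$ disjoint from the attached 2-handle. What you are implicitly assuming is that the genus two Heegaard-type decomposition of $H[\alpha]$ given by $H$ plus the 2-handle is \emph{standard} with respect to the Seifert fibration, i.e., compatible with the vertical annulus decomposition into $V_1 \cup V_2$. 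That is precisely the content of the Boileau--Rost--Zieschang classification (Theorem~\ref{SF Heegaard diagram description from BRZ88}), which the paper invokes: any genus two Heegaard decomposition of $S(\nu/p,\omega/q)$ is homeomorphic to one of $HD_0$, $HD_S$, $HD_T$, and Theorem~\ref{T detailing the correspondence between Heegaard and R-R diagrams of SF spaces over the disk} together with Lemma~\ref{May assume n > 1 in R-R diagrams of SF over the disk} translates those three models into the two R-R forms of Figure~\ref{PSFFig2a}. Your proposed combinatorial case analysis (CMZ-type primitivity recognition plus changes of cutting disks) would in effect have to reprove this uniqueness-of-Heegaard-decompositions result from scratch; as sketched, it does not rule out an a priori exotic splitting of the same Seifert-fibered space, so the forward direction is incomplete. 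Either cite the BRZ classification as the paper does, or supply the (substantial) argument that the induced decomposition is one of the standard ones.
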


\begin{figure}[tbp]
\centering
\includegraphics[width = 1.0\textwidth]{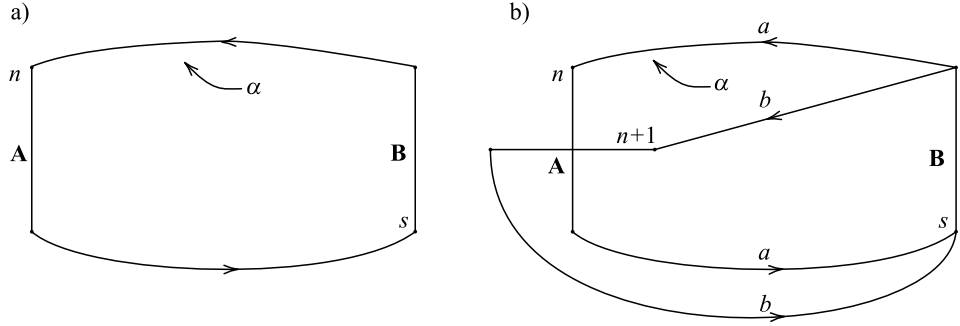}
\caption{If $\alpha$ is a nonseparating simple closed curve in the boundary of a genus two handlebody $H$ such that $H[\alpha]$ is Seifert-fibered over $D^2$ with two exceptional fibers, then $\alpha$ has an R-R diagram with the form of one of these figures with $n, s > 1$, $a, b > 1$, and $\gcd(a,b) = 1$. The converse also holds. (See Figure \ref{PSFFig2b}.)}
\label{PSFFig2a}
\end{figure}

\begin{proof}
We start by showing that if $H[\alpha]$ is Seifert-fibered over $D^2$ with two exceptional fibers, then $\alpha$ has an R-R diagram of the claimed form.

The key idea is that $H[\alpha]$, which is defined to be the manifold obtained by
adding a 2-handle to $H$ along $\alpha$, induces a genus two Heegaard decomposition of a Seifert-fibered space over $D^2$ with two exceptional fibers.
However Heegaard decompositions of a Seifert-fibered space over $D^2$ are well understood. For instance, Theorem~\ref{SF Heegaard diagram description from BRZ88}, of Boileau, Rost and Zieschang, completely describes the genus two Heegaard diagrams of a Seifert-fibered space over $D^2$ with two exceptional fibers. Using this result, Theorem~\ref{T detailing the correspondence between Heegaard and R-R diagrams of SF spaces over the disk} shows how the Heegaard diagrams described in Theorem~\ref{SF Heegaard diagram description from BRZ88} translate into R-R diagrams. And then Lemma~\ref{May assume n > 1 in R-R diagrams of SF over the disk} adds a finishing detail to the proof of this direction by showing that it is always possible to assume that $n > 1$ in Figure~\ref{PSFFig2a}b.

With the proof of one direction of Theorem~\ref{Thm describing R-R diagrams of SF spaces over the disk} finished, it remains to show that if $\alpha$ has an R-R diagram with the form of Figure~\ref{PSFFig2a}a with $n, s > 1$, or Figure \ref{PSFFig2a}b with $n, s > 1$, $a, b > 0$, and $\gcd(a,b) = 1$, then $H[\alpha]$ is Seifert-fibered over $D^2$ with two exceptional fibers.

To see that this is the case, consider Figure~\ref{PSFFig2b} in which each of the R-R diagrams of Figure \ref{PSFFig2a} has been augmented with a simple closed curve $\beta$ disjoint from $\alpha$. Then, in each diagram of Figure~\ref{PSFFig2b}, two parallel copies of $\beta$ bound an essential separating annulus $\mathcal{A}$ in $H$. (Figure~\ref{PSFFig2c} illustrates the situation when $s = 2$, and $D_A$ and $D_B$ are cutting disks of $H$ underlying the A-handle and B-handle of the R-R diagram of $\alpha$.)

Cutting $H$ apart along $\mathcal{A}$ yields a genus two handlebody $W$ and a solid torus $V$. Note that $\alpha$ lies in $\partial W$ as a primitive curve in $W$ implying that $W[\alpha]$ is a solid torus, because any component of $D_B\cap W$ is a cutting disk $D_C$ of $W$ such that an R-R diagram of $\alpha$ with respect to $\{D_A, D_C\}$ of $W$ has the form in Figure~\ref{PSFFig2a} with $s$ replaced by $1$, in which case $\alpha$ in Figure~\ref{PSFFig2a}a
intersects $D_C$ only once and thus is primitive, and $\alpha$ in Figure\ref{PSFFig2a}b is primitive by Lemma~\ref{Notppower1}.
It follows that $H[\alpha]$ is obtained by gluing the two solid tori $W[\alpha]$ and $V$ together along $\mathcal{A}$. So $H[\alpha]$ is Seifert-fibered over $D^2$ with $\beta$ as a regular fiber and the cores of $W[\alpha]$ and $V$ as exceptional fibers.

The last step is to compute the indexes of the two exceptional fibers of $H[\alpha]$. It is clear that %in each case,
the annulus $\mathcal{A}$ wraps around the solid torus $V$ $s$ times longitudinally, so the core of $V$ is an exceptional fiber of index $s > 1$.
For the other index, it follows by computing $\pi_1((W[\alpha])[\beta])(=\pi_1((W[\beta])[\alpha])=\mathbb{Z}_n)$
that if $\alpha$ has an R-R diagram with the form of Figure~\ref{PSFFig2a}a, then the core of $W[\alpha]$ is an exceptional fiber of index $n$.

Finally, suppose the R-R diagram of $\alpha$ has the form of Figure~\ref{PSFFig2a}b. In this case, Lemma~\ref{dot product} can be used to compute the index of the second exceptional fiber formed by the core of $W[\alpha]$.

Abelianizing $\pi_1(W)$, we have:
$[\vec{\alpha}]$ = $(n(a+b) + b, a+b)$ and $[\vec{\beta}\,]$ = $(0, 1)$. By Lemma~\ref{dot product}:
\[
[\vec{\beta}\,] = \pm ([\vec{\alpha}]^{\perp} \circ [\vec{\beta}\,]) = \pm ((-(a+b), n(a+b) + b)\circ (0, 1)) = \pm (n(a+b) + b)
\]
in $H_1(W[\alpha])$.
Thus the regular fiber $\beta$ wraps around the solid torus $W[\alpha]$ longitudinally $n(a+b) + b > 1$ times, so the core of $W[\alpha]$ is an exceptional fiber of index $n(a+b) + b$.
\end{proof}

\begin{rem}\label{about regular fibers}
(1) If $\alpha$ has an R-R diagram of the form shown in Figure~\ref{PSFFig2a}a(Figure~\ref{PSFFig2a}b, resp.), $\alpha$ is said to be a Seifert-d curve \textit{of rectangular(non-rectangular, resp.) form}.

(2) The regular fiber $\beta$ is a proper power curve representing $B^s$ in $\pi_1(H)=F(A, B)$, where $A$ and $B$ are dual to the cutting disks $D_A$
and $D_B$ underlying the $A$-handle and $B$-handle respectively of the R-R diagram.
In addition, since the R-R diagram of Figure~\ref{PSFFig2a}a is symmetric, the argument of the fifth paragraph in the proof above shows that
that the proper power curve representing $A^n$ in the R-R diagram of Figure~\ref{PSFFig2a}a is also a regular fiber of $H[\alpha]$.
\end{rem}

\begin{figure}[tbp]
\centering
\includegraphics[width = 1.0\textwidth]{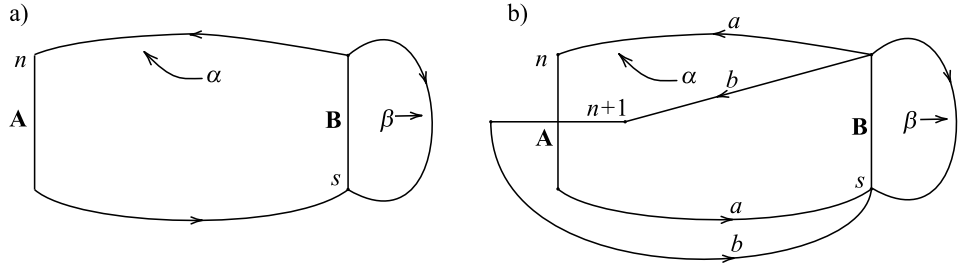}
\caption{In this figure, each of the R-R diagrams of Figure \ref{PSFFig2a} has been augmented with a simple closed curve $\beta$ disjoint from $\alpha$. Then, in each case, two parallel copies of $\beta$ bound an essential separating annulus $\mathcal{A}$ in $H$. Cutting $H$ apart along $\mathcal{A}$ yields a genus two handlebody $W$ and solid torus $V$. Then $\alpha$ lies in $\partial W$ and $W[\alpha]$ is a solid torus. It follows that $H[\alpha]$ is obtained by gluing two solid tori together along $\mathcal{A}$. So $H[\alpha]$ is Seifert-fibered over $D^2$ with $\beta$ as regular fiber and the cores of $W[\alpha]$ and $V$ as exceptional fibers. As for the indexes of the exceptional fibers: If the R-R diagram of $\alpha$ has the form of Figure \ref{PSFFig2a}a, the indexes are $n$ and $s$ respectively. If the R-R diagram of $\alpha$ has the form of Figure \ref{PSFFig2a}b, the indexes are $n(a+b) + b$ and $s$ respectively.}
\label{PSFFig2b}
\end{figure}

\begin{figure}[tbp]
\centering
\includegraphics[width = 0.6\textwidth]{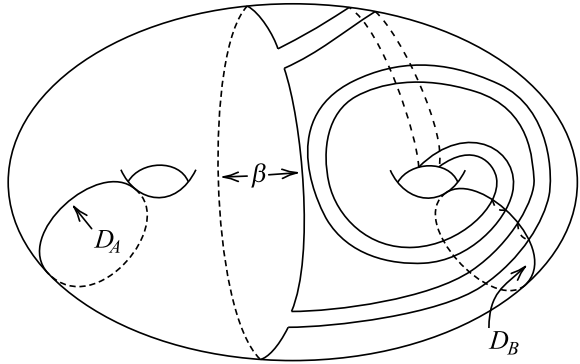}
\caption{If $H$ is a handlebody of genus two with cutting disks $D_A$ and $D_B$ and $\beta$ is a nonseparating simple closed curve in $\partial H$ such that $|\beta \cap \partial D_A| = 0$, and $|\beta \cap \partial D_B| = s > 1$, then two parallel copies of $\beta$ bound an essential separating annulus $\mathcal{A}$ in $H$. This figure illustrates $D_A$, $D_B$, and $\mathcal{A}$ in the special case $s = 2$.}
\label{PSFFig2c}
\end{figure}

Turning to the description of Heegaard decompositions of orientable Seifert-fibered spaces over $D^2$ with two exceptional fibers, let $S(\nu/p, \omega/ q)$ denote an orientable Seifert-fibered space over the disk $D^2$ which has two exceptional fibers of types $\nu / p$ and $\omega / q$ with $0 < \nu < p$ and $0 < \omega < q$.

Also let $W_{m,n}(x,y)$ be the unique primitive word up to conjugacy in the free group $F(x,y)$ which has $(m,n)$ as its abelianization. Then, if $v$ and $w$ are words in $x$ and $y$, $W_{m,n}(v,w)$ is the word obtained from $W_{m,n}(x,y)$ by substituting $v$ for $x$ and $w$ for $y$ in $W_{m,n}(x,y)$.

Then the following theorem of Boileau, Rost and Zieschang(Theorem 5.4 in \cite{BRZ88}) completely describes the genus two Heegaard diagrams of $S(\nu/p, \omega/ q)$.
For the notations in Theorem~\ref{SF Heegaard diagram description from BRZ88}, see Sections 2, 4, and 5 in \cite{BRZ88}.

\begin{thm}\cite{BRZ88}
\label{SF Heegaard diagram description from BRZ88}
The manifold $S(\nu/p, \omega/ q)$ admits three genus two Heegaard decompositions $HD_0$, $HD_S$, and $HD_T$, represented by the following Heegaard diagrams:
\[
HD_0 \leftrightarrow (s^pt^{-q}; \lambda, \mu), \: HD_S \leftrightarrow (W_{p,\nu}(u^{-1}, t^q); -, \mu), \: HD_T \leftrightarrow (W_{q,\omega}(v^{-1}, s^p); \lambda, -).
\]
Here $\nu \lambda \equiv 1 \mod p$ and $\omega \mu \equiv  1 \mod q$. Any Heegaard decomposition of genus two of $S(\nu /p, \omega/ q)$ is homeomorphic to one of these. Moreover:
\begin{enumerate}
\item $HD_0$ is homeomorphic to $HD_T$ \emph{(}or $HD_S$\emph{)} if and only if $\omega \equiv  \pm 1 \mod q$ \emph{(} or $\nu \equiv  \pm 1 \mod p$, respectively\emph{)}.
\item If $\omega\equiv  \pm 1 \mod q$ and $\nu \equiv  \pm 1 \mod p$, then $HD_0$, $HD_S$, and $HD_T$ are all homeomorphic.
\item $HD_S$ and $HD_T$ are homeomorphic if and only if either case \emph{(2)} occurs or $\nu/ p \equiv \pm \omega / q \mod 1$ \emph{(}that is, $p = q$ and $\nu \equiv \omega \mod p$\emph{)}.
\end{enumerate}
\end{thm}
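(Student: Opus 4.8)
The statement is due to Boileau, Rost and Zieschang, so the plan is to indicate how their argument runs rather than to reprove it from scratch; the scheme is to exhibit the three Heegaard diagrams geometrically, reduce the classification to a question about generating pairs of $\pi_1$, and then read off conditions (1)--(3) from when the three diagrams can be matched up. Throughout write $M = S(\nu/p, \omega/q)$: a Seifert-fibered space over $D^2$ with exceptional fibers of orders $p$ and $q$ and a single torus boundary component. Its fundamental group is the amalgam
\[
\pi_1 M \;=\; \langle\, s, t \mid s^p = t^q \,\rangle ,
\]
where $s$ and $t$ are loops running once around the two exceptional fibers and $h := s^p = t^q$ is the class of a regular fiber; note that $\nu$ and $\omega$ do not appear in the group itself, only in the framing of $\partial M$. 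A genus two Heegaard decomposition of $M$ is a splitting $M = H \cup_\Sigma C$ with $H$ a genus two handlebody and $C$ a compression body with $\partial_- C = \partial M$; equivalently it is the data of a genus two handlebody together with the curve $R \subset \partial H$ along which a single $2$-handle is attached to produce $M$ (the defining relator), plus the slope this creates on $\partial M$ --- which is exactly what the symbols $(R; \lambda, \mu)$, $(R; -, \mu)$, $(R; \lambda, -)$ encode.

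First I would realize $HD_0$, $HD_S$, $HD_T$ explicitly. For the symmetric diagram $HD_0$, cut the base disk into two sub-disks $D_1, D_2$ carrying one cone point each and joined by a band $b$; over $D_i$ sits a fibered solid torus $V_i$ of type $\nu/p$ resp.\ $\omega/q$, and over $b$ a trivial block $b \times S^1$. Taking $H$ to be a regular neighborhood of (core of $V_1$) $\cup$ (a core arc of $b$) $\cup$ (core of $V_2$) together with one vertical annulus produces a genus two handlebody whose complement in $M$ is a compression body onto $\partial M$; the cocore disk of the $b$-block, read off in the free basis $\{s, t\}$ coming from the two core loops, is $s^p t^{-q}$, giving $HD_0 \leftrightarrow (s^p t^{-q}; \lambda, \mu)$ with $\lambda, \mu$ determined by $\nu\lambda \equiv 1 \bmod p$ and $\omega\mu \equiv 1 \bmod q$. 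For $HD_S$ one uses that $V_1$ is abstractly a solid torus and may be amalgamated into the handlebody the ``other way'': its meridian is a $(p,\nu)$-curve with respect to the core of $V_1$ and the regular fiber $t^q$, so the defining relator becomes the primitive word $W_{p,\nu}(u^{-1}, t^q)$ and one boundary coordinate is absorbed; $HD_T$ is obtained symmetrically by exchanging the two exceptional fibers. This gives existence of all three.

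The substance is completeness together with the coincidence conditions. Following Zieschang's methods, I would argue that any genus two Heegaard decomposition of $M$ presents $\pi_1 M$ on two generators, and that for these manifolds the decomposition is determined, up to a homeomorphism of $M$, by the generating pair it induces together with its interaction with the peripheral structure (this rigidity is where Waldhausen's theorem for Haken manifolds, plus the fact that genus two handlebody homeomorphisms realize all of $\operatorname{Out}(F_2)$, enter). So the problem reduces to classifying generating pairs of $\langle s\rangle *_{s^p = t^q} \langle t\rangle$ up to Nielsen equivalence. Carrying this out via the action on the Bass--Serre tree of the amalgam --- equivalently via cancellation arguments in the base orbifold group $\Z/p * \Z/q$ --- one finds that every generating pair is Nielsen equivalent to $\{s, t\}$, to $\{s, \text{a primitive word in } u, t^q\}$, or to the mirror of the latter: precisely the three diagrams above. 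Finally, tracking the peripheral $\nu/p$ and $\omega/q$ data through these identifications yields (1)--(3): the type-$\omega/q$ fibered solid torus can be straightened so that $HD_0$ collapses onto $HD_T$ exactly when that solid torus is fibered by regular fibers or behaves like an ordinary $1$-handle, i.e.\ when $\omega \equiv \pm 1 \bmod q$, and symmetrically for $HD_S$; combining both gives (2), while the $HD_S \simeq HD_T$ criterion of (3) reflects the extra self-homeomorphism of $M$ present exactly when $\nu/p \equiv \pm\omega/q \bmod 1$.

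I expect the main obstacle to be this completeness direction: both the clean statement that genus two Heegaard decompositions of $M$ are classified by Nielsen classes of peripherally marked generating pairs --- which demands care about the single torus boundary and a Haken-manifold rigidity input --- and the Nielsen-equivalence classification in $\langle s\rangle *_{s^p = t^q} \langle t\rangle$, from which the three delicate coincidence conditions must be extracted. The explicit constructions and the arithmetic normalizations $\nu\lambda \equiv 1$, $\omega\mu \equiv 1$ are routine by comparison.
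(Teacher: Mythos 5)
This theorem is quoted from \cite{BRZ88} (Theorem 5.4 there); the present paper gives no proof of it, only the citation and a pointer to \cite{BRZ88} for the notation, so there is no argument of the paper's to compare yours against. Your outline is a fair summary of how the cited source actually proceeds: the three decompositions arise as the vertical splitting $HD_0$ and its two modifications through the exceptional solid tori, completeness is reduced to classifying generating pairs of $\pi_1 = \langle s,t \mid s^p = t^q\rangle$ up to Nielsen equivalence --- which is precisely Zieschang's theorem \cite{Z77}, also in this paper's bibliography --- and conditions (1)--(3) come from tracking the peripheral $\nu/p$, $\omega/q$ data through the identifications. The one genuinely thin spot in your sketch is the bridge from Nielsen classes of generating pairs to homeomorphism classes of Heegaard decompositions: this is not a formal consequence of Haken-manifold rigidity plus $\operatorname{Out}(F_2)$ being realized by handlebody homeomorphisms, and establishing it for these Seifert-fibered spaces is where much of the work in \cite{BRZ88} lies; but since the statement is being imported rather than proved here, flagging that step as the main obstacle, as you do, is the right call.
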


\begin{thm}
\label{T detailing the correspondence between Heegaard and R-R diagrams of SF spaces over the disk}
The R-R diagrams in Figures~\emph{\ref{PSFFig1a}}, \emph{\ref{PSFFig1b}}, and \emph{\ref{PSFFig1d}} correspond to the Heegaard diagrams $HD_0$, $HD_S$, and $HD_T$ of Theorem~\emph{\ref{SF Heegaard diagram description from BRZ88}} respectively.
\end{thm}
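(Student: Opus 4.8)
The plan is to translate each of the three Heegaard diagrams $HD_0$, $HD_S$, $HD_T$ of Theorem~\ref{SF Heegaard diagram description from BRZ88} into an R-R diagram by means of the dictionary of \cite{K20}, and then to recognize the three outputs as Figures~\ref{PSFFig1a}, \ref{PSFFig1b}, and \ref{PSFFig1d} respectively. Recall that a genus two Heegaard diagram is a genus two handlebody equipped with a complete set of cutting disks together with the attaching curve of the complementary handlebody, and that an R-R diagram records how that attaching curve meets the two once-punctured-torus handles determined by the disks: a maximal syllable $s^{\pm k}$ of the attaching word becomes a band of $|k|$ parallel arcs (a ``$|k|$-connection'') in the $s$-handle, consecutive syllables are joined across the complementary planar surface in the cyclic order prescribed by the word, and the way each band wraps around its handle is recorded by the connection labels.

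First I would dispose of $HD_0 \leftrightarrow (s^{p}t^{-q};\lambda,\mu)$. Its relator $s^{p}t^{-q}$ consists of a single positive $s$-syllable and a single negative $t$-syllable, so the associated R-R diagram has exactly one band of $p$ parallel arcs in the $s$-handle and one band of $q$ parallel arcs in the $t$-handle, glued in the only planar way; the residues $\lambda$ (with $\nu\lambda\equiv1\bmod p$) and $\mu$ (with $\omega\mu\equiv1\bmod q$) occupying the second slot fix how these two bands wind around their respective handles, and reading this off reproduces exactly the R-R diagram of Figure~\ref{PSFFig1a}, with exceptional-fiber indexes $p$ and $q$.

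Next I would treat $HD_S \leftrightarrow (W_{p,\nu}(u^{-1},t^{q});-,\mu)$, the case $HD_T \leftrightarrow (W_{q,\omega}(v^{-1},s^{p});\lambda,-)$ being the mirror image obtained by exchanging the two handles (equivalently, the two exceptional fibers). The relator is obtained by substituting $u^{-1}$ for $x$ and $t^{q}$ for $y$ in the unique primitive word $W_{p,\nu}(x,y)$ of abelianization $(p,\nu)$. By Theorem~\ref{recognizing primitives and proper powers}, after possibly inverting a generator we may take $W_{p,\nu}$ to have all exponents on one generator equal to $1$ and all exponents on the other lying in $\{e,e+1\}$ for some $e>0$, with the two multiplicities $a,b$ of the corresponding blocks governed by the Euclidean data of $\nu/p$. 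After the substitution, each $x$ becomes a single $u$-arc and each $y$-syllable becomes a band of $q$ arcs in the $t$-handle; regrouping consecutive syllables then produces an R-R diagram whose handles carry precisely the bands and connection labels of Figure~\ref{PSFFig1b}, and whose winding in the $t$-handle is pinned down by the residue $\mu$. Tracking the abelianization through the substitution matches the remaining parameters; for instance the index $p$ appears in the expected $n(a+b)+b$ form.

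The main obstacle is the final bookkeeping in the $HD_S$ (and $HD_T$) case: one has to verify that the connection labels generated by the substitution into $W_{p,\nu}$ --- the integers counting how many times each band of arcs winds around its handle before re-emerging --- coincide exactly with the labels drawn in Figures~\ref{PSFFig1b} and \ref{PSFFig1d}, and that the sign conventions (the $u^{-1}$ and the $t^{-q}$, together with the placement of the $\lambda,\mu$ data) are tracked so that the winding lands on the correct side of each handle. This is a finite, if somewhat intricate, computation with the continued-fraction description of the primitive word; once it is carried out the correspondence is forced, since an R-R diagram and the Heegaard diagram it encodes determine one another up to the standard equivalences.
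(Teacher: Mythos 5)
Your outline — match the relator words of $HD_0$, $HD_S$, $HD_T$ with the words read off the three R-R diagrams, then check that the remaining decorations agree — is the same high-level strategy as the paper's, which begins by observing that the curves $\alpha$ in Figures~\ref{PSFFig1a}, \ref{PSFFig1b}, \ref{PSFFig1d} represent $s^pt^{-q}$, $W_{p,\nu}(u^{-1},t^q)$, $W_{q,\omega}(v^{-1},s^p)$ in $\pi_1(H)$. But that word-matching is the easy half. The substantive content of the theorem is that the extra data $\lambda,\mu$ (equivalently, the Seifert types $\nu/p$ and $\omega/q$) are realized by the particular winding drawn in the figures, and your proposal explicitly defers exactly this step to an unexecuted ``finite, if somewhat intricate, computation.'' Your closing justification — that the correspondence is then ``forced, since an R-R diagram and the Heegaard diagram it encodes determine one another'' — begs the question: the BRZ datum $(W;\lambda,\mu)$ carries strictly more information than the relator word $W$, so matching words alone does not determine which Seifert-fibered space, i.e.\ which $\nu$ and $\omega$, one obtains. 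No concrete mechanism for pinning down $\nu$ and $\omega$ appears in your argument.

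The paper supplies that mechanism, and it is worth knowing since it is not a continued-fraction computation with the primitive word. For Figure~\ref{PSFFig1a}, it places curves $\beta,\gamma_s$ on $\partial N(C_A)$ and $\beta',\gamma_t$ on $\partial N(C_B)$ (the cores $C_A,C_B$ being the exceptional fibers), notes that $\beta,\beta'$ are regular fibers, and reads off $\partial M_A=(\beta^{\nu}\gamma_s^{p})^{\pm1}$ and $\partial M_B=(\beta'^{\omega}\gamma_t^{q})^{\pm1}$, which is precisely the statement that the types are $\nu/p$ and $\omega/q$. For Figure~\ref{PSFFig1b} the B-handle is handled identically; for the other fiber, two parallel copies of the regular fiber $\beta$ bound a separating annulus cutting $H$ into a genus two handlebody $W$ and a solid torus $V$, one checks $\alpha$ is primitive in $W$ so that $W[\alpha]$ is a solid torus whose core is the second exceptional fiber, and then Lemma~\ref{dot product} applied to the abelianized classes $[\vec{\alpha}]=(-n(a+b)-a,\,a+b)=(-p,\nu)$, $[\vec{\gamma}_u]=(-1,0)$, $[\vec{\beta}\,]=(0,1)$ yields $\partial M=(\beta^{\nu}\gamma_u^{p})^{\pm1}$, hence type $\nu/p$; Figure~\ref{PSFFig1d} is symmetric, as you anticipated. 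Until your ``bookkeeping'' step is carried out by some such argument, the proof is incomplete at its essential point.
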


\begin{proof}
First, observe that the curves $\alpha$ in Figures~\ref{PSFFig1a}, \ref{PSFFig1b}, and \ref{PSFFig1d} represent $s^pt^{-q}$, $W_{p,\nu}(u^{-1}, t^q)$, and $W_{q,\omega}(v^{-1}, s^p)$ respectively in $\pi_1(H)$.

Next, consider the diagram of Figure~\ref{PSFFig1a}, and let $C_A$ and $C_B$ be cores of the A-handle and B-handle of $H$. Then $C_A$ and $C_B$ are the exceptional fibers of the Seifert-fibration of $H[\alpha]$. Let $N(C_A)$ and $N(C_B)$ be closed regular neighborhoods of $C_A$ and $C_B$ respectively in $H$, and let $M_A$ and $M_B$ be meridional disks of $N(C_A)$ and $N(C_B)$ respectively.

Observe that the pair of dotted curves $\beta$, $\gamma_s$ on the A-handle of Figure~\ref{PSFFig1a} can be considered to lie on $\partial N(C_A)$, while the pair of dotted curves $\beta'$, $\gamma_t$ on the B-handle of Figure~\ref{PSFFig1a} can be considered to lie on $\partial N(C_B)$. Also observe that as indicated in Remark~\ref{about regular fibers}, the curves $\beta$ and $\beta'$ represent regular fibers of the Seifert-fibration of $H[\alpha]$. Then $\partial M_A$ = $(\beta^{\nu}\gamma_s^p)^{\pm1}$ in $\pi_1(\partial N(C_A))$, while
$\partial M_B$ =  $(\beta'^{\omega}\gamma_t^q)^{\pm1}$ in $\pi_1(\partial N(C_B))$. So $C_A$ and $C_B$ are exceptional fibers of types $\nu / p$ and $\omega / q$ in the Seifert-fibration of $H[\alpha]$.

\begin{figure}[tbp]
\centering
\includegraphics[width = 0.7\textwidth]{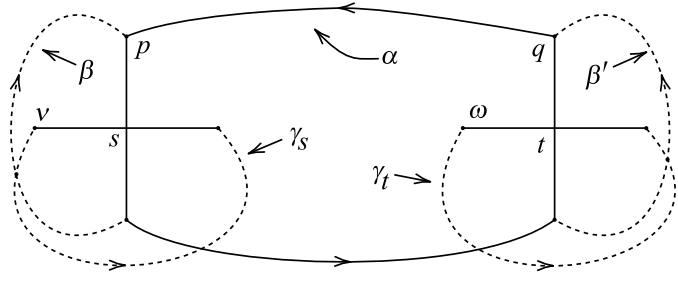}
\caption{Suppose $\nu$, $\omega$, $p$, and $q$ are positive integers such that $0 <\nu < p$, $0 < \omega < q$, $\gcd(\nu , p) = \gcd(\omega, q) = 1$, and $H$ is a genus two handlebody. Then the manifold $H[\alpha]$, obtained by adding a 2-handle to $\partial H$ along a simple closed curve $\alpha$ in $\partial H$ that has an R-R diagram with the form of this figure, is a Seifert-fibered space over $D^2$ with exceptional fibers of types $\nu / p$ and $\omega / q$.}
\label{PSFFig1a}
\end{figure}

\begin{figure}[tbp]
\centering
\includegraphics[width = 0.7\textwidth]{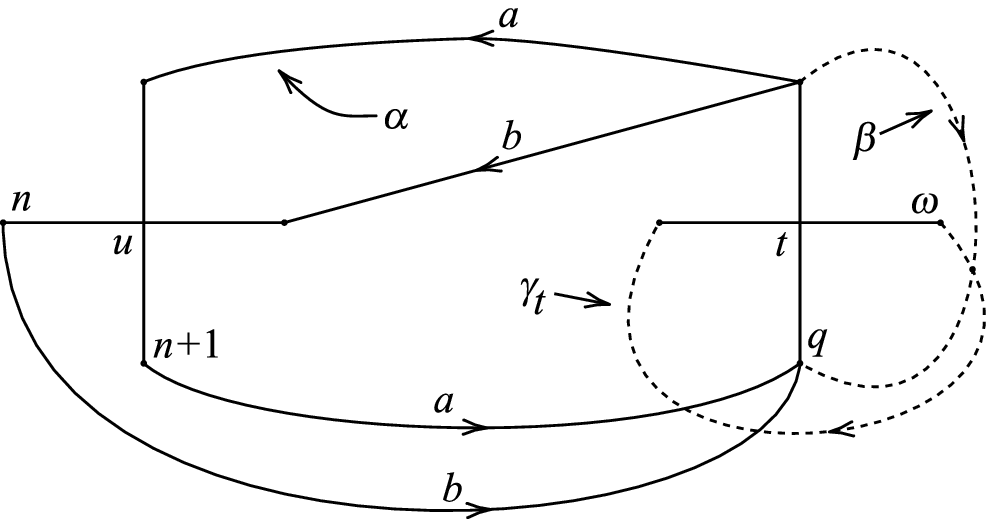}
\caption{Suppose $\nu$, $\omega$, $p$, and $q$ are positive integers such that $1 < \nu < p$, $0 < \omega < q$, and $\gcd(\nu , p) = \gcd(\omega, q) = 1$. In addition, suppose $a$, $b$, and $n$ are positive integers such that $a + b = \nu$, $n\nu + a = p$, and $H$ is a genus two handlebody. Then the manifold $H[\alpha]$, obtained by adding a 2-handle to $\partial H$ along a simple closed curve $\alpha$ in $\partial H$ that has an R-R diagram with the form of this figure, is a Seifert-fibered space over $D^2$ with exceptional fibers of types $\nu / p$ and $\omega / q$.}
\label{PSFFig1b}
\end{figure}

\begin{figure}[tbp]
\centering
\includegraphics[width = 0.7\textwidth]{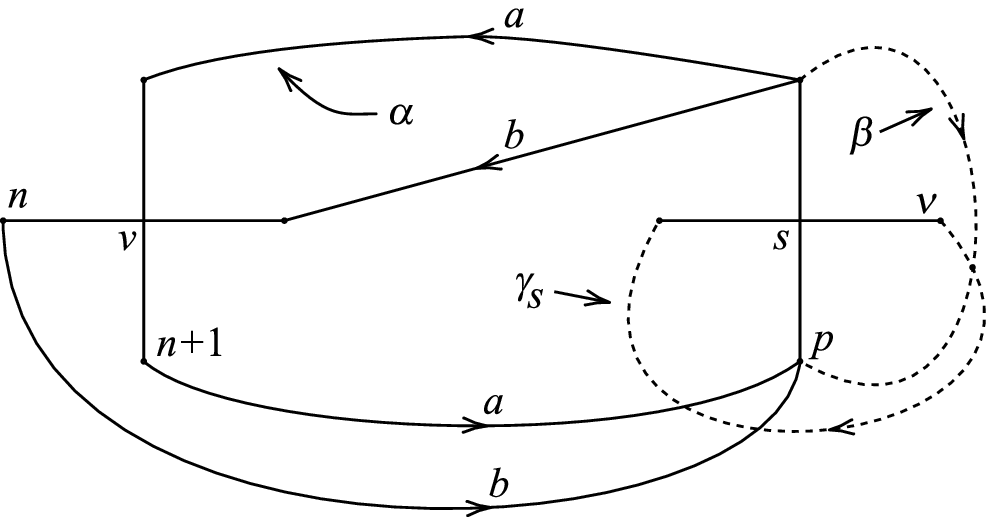}
\caption{Suppose $\nu$, $\omega$, $p$, and $q$ are positive integers such that $0 < \nu < p$, $1 < \omega < q$, and $\gcd(\nu, p) = \gcd(\omega, q) = 1$. In addition, suppose $a$, $b$, and $n$ are positive integers such that $a + b = \omega$, $n\omega + a = q$, and $H$ is a genus two handlebody. Then the manifold $H[\alpha]$, obtained by adding a 2-handle to $\partial H$ along a simple closed curve $\alpha$ in $\partial H$ that has an R-R diagram with the form of this figure, is a Seifert-fibered space over $D^2$ with exceptional fibers of types $\nu / p$ and $\omega / q$.}
\label{PSFFig1d}
\end{figure}

This leaves the diagrams of Figures~\ref{PSFFig1b} and \ref{PSFFig1d}. Since these diagrams are similar, we will only consider Figure~\ref{PSFFig1b} in detail. To start, note that the configuration of the curves $\beta$ and $\gamma_t$ on the B-handle of Figure~\ref{PSFFig1b} is identical to that of $\beta'$ and $\gamma_t$ on the B-handle of Figure~\ref{PSFFig1a}. Since $\beta$ is again a regular fiber in the Seifert-fibration of $H[\alpha]$ when $\alpha$ has an R-R diagram on $\partial H$ with the form of Figure~\ref{PSFFig1b}, the core of the B-handle $C_B$ of $H$ is again an exceptional fiber of type $\omega/ q$. The other exceptional fiber that exists in $H[\alpha]$ when $\alpha$ has an R-R diagram on $\partial H$ with the form of Figure~\ref{PSFFig1b}, arises in a slightly different way.

As in Figure~\ref{PSFFig2c}, two parallel copies of the regular fiber $\beta$ in Figure~\ref{PSFFig1b} bound an essential separating annulus $\mathcal{A}$ in $H$. Cutting $H$ open along $\mathcal{A}$ cuts $H$ into a genus two handlebody $W$ and a solid torus $V$ which has $C_B$ as its core. The curve $\alpha$ lies on $\partial W$, and the R-R diagram of $\alpha$ on $\partial W$ appears in Figure~\ref{PSFFig1c}. Since $\alpha$ is primitive in $W$, $W[\alpha]$ is a solid torus $V'$. Let $C_{V'}$ be the core of $V'$. Then $C_{V'}$ is the second exceptional fiber of the Seifert-fibration of $H[\alpha]$.

Let $M$ be the meridional disk of $V'$, and note that the curves $\beta$ and $\gamma_u$ of Figure~\ref{PSFFig1c} lie on $\partial W$ and $\beta$ and $\gamma_u$ form a basis for $\pi_1(\partial W[\alpha])$. The next step is to obtain an expression for $\partial M$ in $\pi_1(\partial W[\alpha])$ in terms of the basis $\beta$, $\gamma_u$ of $\pi_1(\partial W[\alpha])$.

We can do this by using Lemma~\ref{dot product}. Abelianizing $\pi_1(W)$, we have:
$[\vec{\alpha}]$ = $(-n(a+b)-a, a+b)$ = $(-p, \nu)$, $[\vec{\gamma}_u]$ = $(-1, 0)$, and $[\vec{\beta}\,]$ = $(0, 1)$. By Lemma~\ref{dot product}:
\[
[\vec{\gamma}_u] = \delta ([\vec{\alpha}]^{\perp} \circ [\vec{\gamma}_u]) = \delta((\nu, p)\circ (-1, 0)) = -\delta \nu, \quad \text{and}
\]
\[
[\vec{\beta}\,] = \delta ([\vec{\alpha}]^{\perp} \circ [\vec{\beta}\,]) = \delta((\nu, p)\circ (0, 1)) = \delta p
\]
in $H_1(W[\alpha])$, where $\delta = \pm 1$. It follows that $\partial M$ = $(\beta^{\nu}\gamma_u^p)^{\pm1}$ in $\pi_1(\partial W[R])$. So $C_{V'}$ is an exceptional fiber of type $\nu / p$ in the Seifert-fibration of $H[\alpha]$ when $\alpha$ has an R-R diagram with the form of Figure~\ref{PSFFig1b}.

Similarly, one sees that if $\alpha$ has an R-R diagram with the form of Figure~\ref{PSFFig1d}, then $H[\alpha]$ is also Seifert-fibered over $D^2$ with two exceptional fibers of types $\nu / p$ and $\omega / q$.
\end{proof}

It will be convenient to be able to assume that $n > 1$ in Figure~\ref{PSFFig2a}b. The following lemma shows this can always be done.

\begin{figure}[tbp]
\centering
\includegraphics[width = 0.65\textwidth]{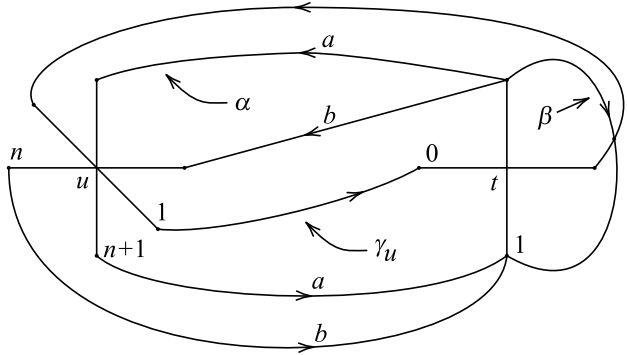}
\caption{Suppose $\alpha$ has an R-R diagram on $\partial H$ with the form of Figure~\ref{PSFFig1b}. Let $W$ be the genus two handlebody obtained when $H$  is cut open along the essential separating annulus in $H$ bounded by two parallel copies of the regular fiber $\beta$ in Figure~\ref{PSFFig1b}. Then $\alpha$ lies on $\partial W$, and $\alpha$ has an R-R diagram on $\partial W$ with the form of this figure. Then $W[\alpha]$ is a solid torus, and the curves $\beta$ and $\gamma_u$ are a basis for $\partial W[\alpha]$. By using Lemma~\ref{dot product} to compute the images of $\beta$ and $\gamma_u$ in $H_1(W[\alpha])$, it is possible to see that the core of $W[\alpha]$ is an exceptional fiber of type $\nu / p$ in the Seifert-fibration of $H[\alpha]$.}
\label{PSFFig1c}
\end{figure}

\begin{figure}[tbp]
\centering
\includegraphics[width = 0.65\textwidth]{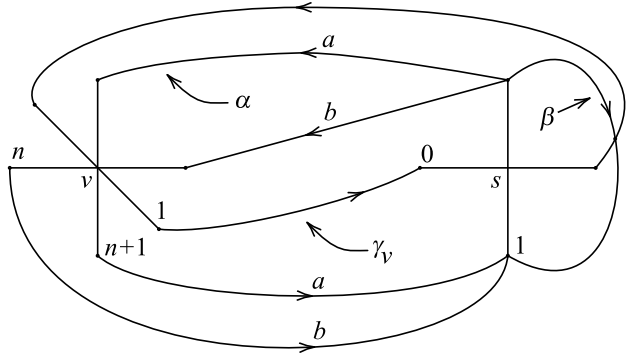}
\caption{Suppose $\alpha$ has an R-R diagram on $\partial H$ with the form of Figure~\ref{PSFFig1d}. Let $W$ be the genus two handlebody obtained when $H$  is cut open along the essential separating annulus in $H$ bounded by two parallel copies of the regular fiber $\beta$ in Figure~\ref{PSFFig1d}. Then $\alpha$ lies on $\partial W$, and $\alpha$ has an R-R diagram on $\partial W$ with the form of this figure. Then $W[\alpha]$ is a solid torus, and the curves $\beta$ and $\gamma_v$ are a basis for $\partial W[\alpha]$. By using Lemma~\ref{dot product} to compute the images of $\beta$ and $\gamma_u$ in $H_1(W[\alpha])$, it is possible to see that the core of $W[\alpha]$ is an exceptional fiber of type $\omega / q$ in the Seifert-fibration of $H[\alpha]$.}
\label{PSFFig1e}
\end{figure}

\begin{lem}
\label{May assume n > 1 in R-R diagrams of SF over the disk}
Suppose $S(\nu/p, \omega/ q)$ has an R-R diagram with the form of Figure~\emph{\ref{PSFFig1b}} or \emph{\ref{PSFFig1d}} with $n = 1$. Then $S(\nu/p, \omega/ q)$ also has another R-R diagram with the form of Figure~\emph{\ref{PSFFig1b}} or \emph{\ref{PSFFig1d}} in which $n > 1$.
\end{lem}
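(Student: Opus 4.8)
The plan is to combine an arithmetic observation about the label $n$ in Figure~\ref{PSFFig1b} with the standard fact that the orientation of the regular fiber of a Seifert fibration is merely an auxiliary choice.

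First I would pin down what $n=1$ means in Figure~\ref{PSFFig1b}. The relations $a+b=\nu$ and $n\nu+a=p$ with $a,b,n>0$ force $0<a<\nu$, so $n=\lfloor p/\nu\rfloor$, and $n=1$ is equivalent to $\nu<p<2\nu$; in that range $a=p-\nu$ and $b=2\nu-p$. Under this hypothesis $p>2(p-\nu)$, hence $\lfloor p/(p-\nu)\rfloor\ge 2$, while $\gcd(p,p-\nu)=\gcd(p,\nu)=1$. So, provided $p-\nu\ge 2$, the pair $(p-\nu,p)$ again satisfies $1<p-\nu<p$ with $\gcd=1$, but now realizes the label $n=\lfloor p/(p-\nu)\rfloor\ge 2$. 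The same computation applies with $(\nu,p)$ replaced by $(\omega,q)$ for Figure~\ref{PSFFig1d}.

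Next I would pass to a homeomorphic description of the manifold. If $\alpha$ has an R-R diagram of the form of Figure~\ref{PSFFig1b} with $n=1$, then by Theorem~\ref{T detailing the correspondence between Heegaard and R-R diagrams of SF spaces over the disk} together with Theorem~\ref{SF Heegaard diagram description from BRZ88} the manifold $H[\alpha]$ is $S(\nu/p,\omega/q)$. Reversing the orientation of the regular fiber replaces the normalized Seifert invariants $\nu/p$ and $\omega/q$ by $(p-\nu)/p$ and $(q-\omega)/q$ without changing the underlying $3$-manifold, so $S(\nu/p,\omega/q)\cong S((p-\nu)/p,(q-\omega)/q)$. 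Applying the construction direction of Theorem~\ref{T detailing the correspondence between Heegaard and R-R diagrams of SF spaces over the disk} (equivalently, reading off Figure~\ref{PSFFig1b} for these new invariants, which is legitimate since $1<p-\nu<p$, $0<q-\omega<q$, and the relevant gcd's are $1$) yields an R-R diagram of the form of Figure~\ref{PSFFig1b} whose associated $2$-handle addition is $S((p-\nu)/p,(q-\omega)/q)\cong S(\nu/p,\omega/q)$ and whose label $n$ equals $\lfloor p/(p-\nu)\rfloor\ge 2>1$. This is the desired diagram, and the case of Figure~\ref{PSFFig1d} is identical after exchanging the roles of the two handles and of $(\nu,p)$, $(\omega,q)$.

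The step that needs care is the degenerate case $a=1$ (equivalently $\nu\equiv-1\pmod p$), in which $(p-\nu,p)=(1,p)$ cannot serve as the complex handle of a Figure~\ref{PSFFig1b} diagram. I would then run the same reversal argument on the second exceptional fiber instead: after possibly replacing $\omega$ by $q-\omega$ so that the value $\omega'$ used satisfies $1<\omega'<q/2$, one obtains a Figure~\ref{PSFFig1d} diagram with label $\lfloor q/\omega'\rfloor\ge 2$, unless also $\omega\equiv\pm1\pmod q$. In that last, fully degenerate case Theorem~\ref{SF Heegaard diagram description from BRZ88}(2) shows $HD_0$, $HD_S$, $HD_T$ are all homeomorphic, so the given diagram is homeomorphic to one of the symmetric form of Figure~\ref{PSFFig1a}, whose two indices are automatically greater than $1$; hence nothing further is required in the application to Theorem~\ref{Thm describing R-R diagrams of SF spaces over the disk}. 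I expect that verifying the Heegaard/R-R translation carefully enough to license these degenerate reductions — rather than the main arithmetic argument — will be where most of the work lies.
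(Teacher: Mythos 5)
Your arithmetic is correct ($n=\lfloor p/\nu\rfloor$, so $n=1$ forces $\nu<p<2\nu$, and negating both Seifert invariants turns the label into $\lfloor p/(p-\nu)\rfloor\ge 2$ whenever $a=p-\nu>1$), and it lands exactly where the paper lands: the paper's own remark identifies its new diagram as one for $S((p-\nu)/p,-\omega/q)$. But your route is genuinely different from the paper's and has a gap. The paper does not argue at the level of the homeomorphism type of the manifold; it performs an explicit change of cutting disks of the given handlebody $H$ (the automorphism $A^{-1}\mapsto A^{-1}B^{-q}$, i.e., bandsumming $D_B$ with $D_A$ along the arcs of $\alpha$), so the resulting diagram of Figure~\ref{PSFFig1f} is automatically a diagram of the \emph{same} curve $\alpha$. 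That is what the application in Theorem~\ref{Thm describing R-R diagrams of SF spaces over the disk} requires: one must show that the given curve (equivalently, the given Heegaard splitting) admits a diagram with $n>1$, not merely that some curve in some handlebody whose $2$-handle addition is homeomorphic to $S(\nu/p,\omega/q)$ does. Your construction manufactures a fresh diagram realizing the new invariants; to conclude anything about $\alpha$ you would still need the uniqueness statement of Theorem~\ref{SF Heegaard diagram description from BRZ88} together with a check that your new diagram represents a splitting homeomorphic to the original one. In the main case this is repairable (both diagrams are of type $HD_S$, and the orientation-reversing, fiber-preserving homeomorphism can be seen to carry $HD_S$ to $HD_S$), but you do not address it.

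In the degenerate case $a=1$ the same issue becomes fatal to your fix. Passing to a diagram of the form of Figure~\ref{PSFFig1d} built from the other exceptional fiber means passing from $HD_S$ to $HD_T$, and by Theorem~\ref{SF Heegaard diagram description from BRZ88}(3) these splittings are not homeomorphic in general, so the original curve need not carry any such diagram. (A smaller point: replacing $\omega$ by $q-\omega$ alone does not preserve the homeomorphism type of $S(\nu/p,\omega/q)$; both invariants must be negated simultaneously.) The paper's resolution of this case is the right one and is simpler: $a=1$ forces $\nu\equiv-1\pmod p$, hence $HD_S\cong HD_0$ by Theorem~\ref{SF Heegaard diagram description from BRZ88}(1), so the given splitting already has a rectangular diagram of the form of Figure~\ref{PSFFig1a}, which is all that Theorem~\ref{Thm describing R-R diagrams of SF spaces over the disk} needs. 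If you keep your classification-based approach, you should state explicitly that the new diagram represents the same splitting up to homeomorphism and route the degenerate case through $HD_0$ rather than $HD_T$.
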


\begin{proof}
Suppose a simple closed curve $\alpha$ in the boundary of a genus two handlebody $H$ has an R-R diagram with the form of Figure~\ref{PSFFig1b} with $n = 1$. For the form of Figure~\ref{PSFFig1d},
the similar argument can apply. Note that since $n=1$, $p=2a+b$ and $\nu=a+b$. It is easy to see that the underlying Heegaard diagram of $\alpha$ on $\partial H$ does not have minimal complexity. Thus in order to have minimal complexity, we can perform a change of cutting disks of $H$, i.e., replace the cutting disk $D_B$ of $H$ which underlies the $B$-handle with a new cutting disk $D'_B$ by bandsumming $D_B$ with the cutting disk $D_A$ in the $A$-handle along the arc of $\alpha$.

Specifically, for the weights $a$ and $b$ in the R-R diagram, since gcd$(a,b)=1$, we can let $b=\rho a+r$, where $\rho\geq0$ and $0\leq r<a$. However we may assume $r>0$, otherwise $a=1$ and
thus $p=2+b$ and $\nu=1+b$, which implies that $\nu\equiv -1 \mod p$ and it follows from Theorem~\ref{SF Heegaard diagram description from BRZ88} that this Heegaard decomposition is homeomorphic to $HD_0$.

Now we record $\alpha$ by starting the $a$ parallel arcs entering into the $-2$-connection in the $A$-handle. It follows from the R-R diagram that $\alpha$ is the product of two subwords $A^{-2}B^q(A^{-1}B^q)^\rho$ and $A^{-2}B^q(A^{-1}B^q)^{\rho+1}$ with $|A^{-2}B^q(A^{-1}B^q)^\rho|=a-r$ and $|A^{-2}B^q(A^{-1}B^q)^{\rho+1}|=r$. We perform a change of cutting disks of the handlebody $H$ which induces an automorphism of $\pi_1(H)$ that takes
$A^{-1}\mapsto A^{-1}B^{-q}$. Then by this change of cutting disks, $A^{-2}B^q(A^{-1}B^q)^\rho$ and $A^{-2}B^q(A^{-1}B^q)^{\rho+1}$ are sent to $A^{-1}B^{-q}(A^{-1})^{\rho+1}$ and $A^{-1}B^{-q}(A^{-1})^{\rho+2}$. Therefore the resulting Heegaard diagram of $\alpha$ has minimal complexity and realizes a new R-R diagram of the form in Figure~\ref{PSFFig1f}, which is the same form as in Figure~\ref{PSFFig1b} with $n>1$.
\end{proof}

\begin{figure}[tbp]
\centering
\includegraphics[width = 0.6\textwidth]{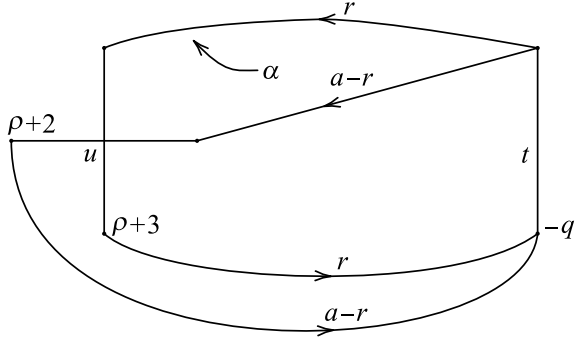}
\caption{A new R-R diagram of $\alpha$ obtained by performing a change of cutting disks of the handlebody $H$ which induces an automorphism of $\pi_1(H)$ that takes
$A^{-1}\mapsto A^{-1}B^{-q}$.}
\label{PSFFig1f}
\end{figure}

\begin{rem}
The following observations are relevant for Lemma~\ref{May assume n > 1 in R-R diagrams of SF over the disk}.
\begin{enumerate}
\item The change of cutting disks of $H$ inducing the automorphism $A^{-1} \mapsto A^{-1}B^{-q}$ in $\pi_1(H)$ corresponds to the change of the cutting disk $D_B$ to a new cutting disk $D_B'$ which is obtained by bandsumming $D_B$ with the cutting disk $D_A$ along the arcs of $\alpha$ $q$ times.
\item The diagram of Figure~\ref{PSFFig1f} corresponds to the Heegaard decomposition of $S((p-\nu)/p, -\omega/q)$, which is homeomorphic to $S(\nu/p, \omega/q)$ as desired.
\end{enumerate}
\end{rem}

\section{R-R diagrams of Seifert-m curves}
\label{Genus two R-R diagrams of Seifert-m curves}

If $\alpha$ is a Seifert-m curve on a genus two handlebody $H$, then by its definition $H[\alpha]$ is a Sefiert-fibered space over the M\"obius band with at most one exceptional fiber. The main result of this section is the following theorem.

\begin{thm}
\label{Thm describing R-R diagrams of SF spaces over the Mobius band}
If $\alpha$ is a nonseparating simple closed curve in the boundary of a genus two handlebody $H$ such that $H[\alpha]$ is a Seifert-fibered space $M$ over the M\"obius band, then $\alpha$ has an R-R diagram of the form shown in Figure~\emph{\ref{SF_on_Mobius1}}, and $\alpha$ represents $AB^sA^{-1}B^s$ in $\pi_1(H)$. There is no loss in taking $s > 0$, and then $M$ has an exceptional fiber if and only if $s > 1$, in which case, $s$ equals the index of the exceptional fiber of $M$.

Conversely, if $\alpha$ has an R-R diagram of the form shown in Figure~\emph{\ref{SF_on_Mobius1}}, then $H[\alpha]$ is Seifert-fibered over the M\"obius band with one exceptional fiber of index $s$ provided that $s>1$. If $s=1$ in Figure~\emph{\ref{SF_on_Mobius1}}, then $H[\alpha]$ is Seifert-fibered space over the M\"obius band with no exceptional fibers.

In addition, a curve $\beta$ shown in Figure~\emph{\ref{SF_on_Mobius1a}}, which is an augmentation of Figure~\emph{\ref{SF_on_Mobius1a}}, is a regular fiber of $H[\alpha]$.
\end{thm}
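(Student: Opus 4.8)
\emph{Overall strategy.} The plan is to prove both directions in parallel with the treatment of Seifert‑d curves in Theorem~\ref{Thm describing R-R diagrams of SF spaces over the disk}: the forward direction from a description of the genus two Heegaard diagrams of the relevant Seifert‑fibered spaces, and the converse from an explicit cut‑and‑reglue argument that displays the fibration on $H[\alpha]$ directly. Along the way one also produces the regular fiber $\beta$.

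\emph{Forward direction.} As in the disk case, if $H[\alpha]$ is Seifert‑fibered over the M\"obius band then $H$ together with the $2$‑handle along $\alpha$ presents $H[\alpha]$ by a genus two Heegaard diagram in which $\alpha$, read in $\pi_1(H)=F(A,B)$, is the attaching curve. The orientable Seifert‑fibered spaces over the M\"obius band with at most one exceptional fiber and torus boundary are exactly the orientable twisted $I$‑bundle $N$ over the Klein bottle (no exceptional fiber) and, for each $s\ge 2$, the manifold with fundamental group $\langle A,B\mid AB^sA^{-1}B^s\rangle$ (one exceptional fiber of index $s$); for $s\ge 2$ this is not Seifert‑fibered over the disk (its first homology is cyclic of order $2s$), so the disk classification of Section~\ref{Genus two R-R diagrams of Seifert-d curves} does not suffice and one needs the M\"obius‑band analogue of Theorem~\ref{SF Heegaard diagram description from BRZ88}. (For $N$ the disk classification does apply, since $N=S(1/2,1/2)$ and, by part~(2) of Theorem~\ref{SF Heegaard diagram description from BRZ88}, $HD_0$, $HD_S$, $HD_T$ then all coincide.) Invoking that classification — available in, or extractable from, \cite{BRZ88} — and translating each resulting Heegaard diagram into an R‑R diagram exactly as in Theorem~\ref{T detailing the correspondence between Heegaard and R-R diagrams of SF spaces over the disk}, then normalizing in the spirit of Lemma~\ref{May assume n > 1 in R-R diagrams of SF over the disk}, one checks that every possibility collapses to the single form of Figure~\ref{SF_on_Mobius1} with $\alpha=AB^sA^{-1}B^s$. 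Replacing $B$ by $B^{-1}$ allows $s>0$; $s=0$ is impossible since $\alpha$ is essential. Finally $AB^sA^{-1}B^s=(AB^sA^{-1})B^s$ exhibits $B^s$ as an element commuting with $B$ and inverted by conjugation by $A$, which identifies the fiber class as $B^s$ and shows $H[\alpha]$ has an exceptional fiber exactly when $s>1$, of index $s$.

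\emph{Converse.} Augment Figure~\ref{SF_on_Mobius1} with the curve $\beta$ of Figure~\ref{SF_on_Mobius1a}; it represents $B^s$, is disjoint from $\alpha$, and two parallel copies of it cobound an essential annulus $\mathcal A$ in $H$. The point at which this diverges from the disk case is that $\mathcal A$ is \emph{non‑separating} in $H$: in $H[\alpha]$ it is isotopic to the vertical annulus over an essential — hence non‑separating — proper arc of the M\"obius‑band base (the regular fiber being $B^s=\beta$), so it is non‑separating there; since $\alpha$ is disjoint from $\mathcal A$, if $\mathcal A$ separated $H$ it would still separate $H[\alpha]$, a contradiction. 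Cutting $H$ along $\mathcal A$ therefore yields a single genus two handlebody $W$ whose boundary contains two copies $A_1,A_2$ of $\mathcal A$, with $\alpha\subset\partial W$. Reading off the R‑R diagram of $\alpha$ on $\partial W$ shows $\alpha$ is primitive in $W$ (the analogue of the argument in Figure~\ref{PSFFig2b} and Lemma~\ref{Notppower1}), so $W[\alpha]$ is a solid torus $V'$, and $H[\alpha]=V'/(A_1\sim A_2)$ is obtained by gluing two annuli in $\partial V'$, each vertical with core a copy of $\beta$. The gluing reverses the orientations induced on the base, so the quotient of the base disk is a M\"obius band and $H[\alpha]$ is Seifert‑fibered over the M\"obius band with regular fiber $\beta$. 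Abelianizing $\pi_1(W)$ and applying Lemma~\ref{dot product} to $[\vec{\alpha}]$ and $[\vec{\beta}\,]$ computes the longitudinal winding of $\beta$ about the core of $V'$ to be $s$, so that core is an exceptional fiber of index $s$ when $s>1$ and an ordinary fiber when $s=1$ (in which case $H[\alpha]=N$). This will also give the final assertion, since $\beta$ was produced as a regular fiber, and it matches Remark~\ref{about regular fibers}(2).

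\emph{Main obstacle.} The principal difficulty is the forward direction: unlike the disk case, where Theorem~\ref{SF Heegaard diagram description from BRZ88} is available off the shelf, one must secure (from \cite{BRZ88} or by a direct argument) the classification of genus two Heegaard decompositions of Seifert‑fibered spaces over the M\"obius band, and then carry out the routine‑but‑delicate translation into R‑R diagrams together with the normalization that collapses everything to Figure~\ref{SF_on_Mobius1}. A secondary point needing care in the converse is verifying that cutting $H$ along $\mathcal A$ really produces a handlebody and that $A_1$ and $A_2$ are reglued by a base‑orientation‑reversing map — this is precisely what forces the M\"obius‑band, rather than disk, conclusion of Theorem~\ref{Thm describing R-R diagrams of SF spaces over the disk}.
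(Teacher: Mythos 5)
There are two genuine problems, one in each direction.

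\emph{Forward direction.} Your entire argument rests on ``invoking'' a classification of genus two Heegaard decompositions of Seifert-fibered spaces over the M\"obius band, which you describe as ``available in, or extractable from,'' \cite{BRZ88}. No such classification is established: Theorem~\ref{SF Heegaard diagram description from BRZ88} as stated (and as used for Seifert-d curves) covers only $S(\nu/p,\omega/q)$ over the disk, and knowing the abstract presentation $\langle A,B\mid AB^sA^{-1}B^s\rangle$ of $\pi_1(H[\alpha])$ does not tell you that the attaching curve $\alpha$ itself can be written in that form with respect to some basis dual to cutting disks. You correctly flag this as the ``principal difficulty,'' but that is precisely the content of the theorem, so the forward direction is not proved. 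The paper avoids this entirely by a different route: Eudave-Mu\~noz's theorem \cite{EM94} produces a nonseparating essential annulus $\mathcal{A}$ in $H$ with $\partial\mathcal{A}$ disjoint from $\alpha$ that remains essential (and, by \cite{H07}, vertical) in $H[\alpha]$; Lemma~\ref{R-R diagrams of nonseparating annuli} normalizes the R-R diagram of $\partial\mathcal{A}$ to Figure~\ref{SF_on_Mobius5}, Lemma~\ref{diagrams of curves disjoint from nonseparating annuli} forces $\alpha$ into the form of Figure~\ref{SF_on_Mobius2}, and Lemmas~\ref{must have (a,b) = (0,1) when s > 1} and \ref{must have (a,b) = (0,1) when s = 1} use Zieschang's Nielsen-equivalence theorem for $\langle x,y\mid x^2y^2\rangle$ together with a minimal-length argument to force $(a,b)=(0,1)$. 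None of this machinery appears in your sketch. (Incidentally, $H_1$ of $\langle A,B\mid AB^sA^{-1}B^s\rangle$ is $\Z\oplus\Z_{2s}$, not cyclic of order $2s$; this is inessential but should be fixed.)

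\emph{Converse.} The annulus cobounded by two parallel copies of $\beta$ is \emph{separating} in $H$: exactly as in Figure~\ref{PSFFig2c} and the Seifert-d argument, it cuts $H$ into a genus two handlebody $W$ and a solid torus $V$ containing the core of the $B$-handle. Your claim that it is nonseparating is false, and your justification is circular, since it presumes the fibration of $H[\alpha]$ over the M\"obius band that the converse is trying to establish, and moreover identifies this annulus with the vertical annulus over an essential arc of the base, which it is not (it is vertical over the boundary of a disk neighborhood of the exceptional point). The nonseparating annulus that actually produces the M\"obius band is the one with boundary $\beta\cup\hat{\beta}$ for the second, non-parallel curve $\hat{\beta}$ of Figure~\ref{SF_on_Mobius5}. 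The paper's converse proceeds in two steps: for $s=1$ it cuts along that nonseparating annulus, observes $\alpha$ is primitive in the resulting genus two handlebody, and reglues the two annulus copies in the boundary of the resulting solid torus to exhibit the M\"obius-band fibration; for $s>1$ it first cuts along the separating annulus to split off $V$ (the exceptional fiber of index $s$) and reduces to the $s=1$ case inside $W$ via the cutting disk $D_C$. Your single-step argument conflates these two distinct annuli, so the construction of the fibration does not go through as written.
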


\begin{figure}[tbp]
\centering
\includegraphics[width = 0.6\textwidth]{SF_on_Mobius1-3}
\caption{If attaching a 2-handle to a genus two handlebody $H$ along a simple closed curve $\alpha$ yields a Seifert-fibered space over the M\"obius band with one exceptional fiber of index $s > 1$, then $\alpha$ has an R-R diagram with the form of this figure in which $\alpha = AB^sA^{-1}B^s$ in $\pi_1(H)$.}
\label{SF_on_Mobius1}
\end{figure}

\begin{proof}
Suppose $H[\alpha]$ is homeomorphic to a Seifert-fibered space $M$ over the M\"obius band. Then $M$ contains an essential nonseparating annulus which is vertical in the Seifert fibration of $M$. (Such an annulus can be easily obtained by starting with a nonseparating arc in the M\"obius band---taking care to choose an arc which misses any exceptional fiber of $M$---and then saturating that arc in the Seifert-fibration of $M$.)

Since $M$ is Seifert-fibered, and not a solid torus, $H[\alpha]$ is $\partial$-irreducible. This implies $\alpha$ intersects every cutting disk of $H$. Now a theorem of Eudave-Mu\~noz applies. It is shown in \cite{EM94} that if $H[\alpha]$ contains an essential nonseparating annulus, then there exists an essential nonseparating annulus $\mathcal{A}$ in $H$, with $\partial \mathcal{A}$ and $\alpha$ disjoint, such that $\mathcal{A}$ is essential in $H[\alpha]$. (Note that in \cite{EM94} the definition of an essential annulus which is properly embedded in a 3-manifold $M$ is that it is incompressible and not $\partial$-parallel.) %(More specifically, see Theorem 1 for an essential annulus in $H$ and Proposition C for the essentiality of $\mathcal{A}$ in $H[\alpha]$, and see the paragraph above the claim 3.7 in page 142 for "nonseparating"in \cite{EM94}.)
Furthermore, it follows from Lemmas 1.10, 1.11 and the argument following them in \cite{H07} that $\mathcal{A}$ is vertical in the Seifert-fibration of $H[\alpha]$.

This suggests looking for all possible R-R diagrams of $\alpha$ by starting with an R-R diagram $\mathcal{D}$ of the boundary components of a nonseparating annulus $\mathcal{A}$ in $\partial H$. Then any R-R diagram of $\alpha$ must be obtained by adding $\alpha$ to $\mathcal{D}$ so that $\alpha$ is disjoint from the curves $\beta$ and $\hat{\beta}$ of $\partial \mathcal{A}$, and $\alpha$ intersects every cutting disk of $H$.

Lemma \ref{R-R diagrams of nonseparating annuli} carries out the first step of this scheme by showing that if $\mathcal{A}$ is a nonseparating essential annulus in a genus two handlebody $H$, and $\beta$ and $\hat{\beta}$ are the components of $\partial \mathcal{A}$ in $\partial H$, then $\beta$ and $\hat{\beta}$ have an R-R diagram $\mathcal{D}$ of the form shown in Figure \ref{SF_on_Mobius5}.

Next, Lemma~\ref{diagrams of curves disjoint from nonseparating annuli} shows that if $\mathcal{D}$ is an R-R diagram with the form of Figure \ref{SF_on_Mobius5}, and a simple closed curve $\alpha$ is added to $\mathcal{D}$ so that $\alpha$ is disjoint from $\beta$ and $\hat{\beta}$ in $\mathcal{D}$, and $\alpha$ intersects every cutting disk of $H$, then the resulting R-R diagram must have the form shown in Figure \ref{SF_on_Mobius2}.

\begin{figure}[tbp]
\centering
\includegraphics[width = 0.7\textwidth]{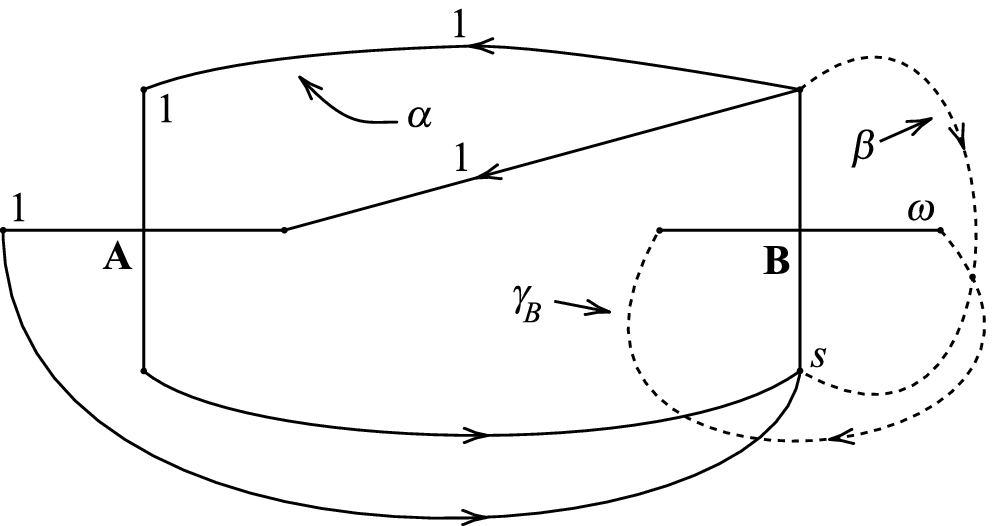}
\caption{Suppose $s$ and $\omega$ are positive integers such that $0 < 2\omega \leq s$, $\gcd(s, \omega) = 1$, and $H$ is a genus two handlebody. Then the manifold $H[\alpha]$, obtained by adding a 2-handle to $\partial H$ along a simple closed curve $\alpha$ in $\partial H$ that has an R-R diagram with the form of this figure, is a Seifert-fibered space over the M\"obius band with one exceptional fiber of type $\omega / s$, whose regular fiber is the curve $\beta$.}
\label{SF_on_Mobius1a}
\end{figure}

Lemmas~\ref{must have (a,b) = (0,1) when s > 1} and \ref{must have (a,b) = (0,1) when s = 1} finish the argument by showing that if $H[\alpha]$ is Seifert-fibered over the M\"obius band, then $(a,b) = (0,1)$ in Figure~\ref{SF_on_Mobius2}, so Figure~\ref{SF_on_Mobius2} reduces to Figure~\ref{SF_on_Mobius1}. In addition, these lemmas show that $H[\alpha]$ has an exceptional fiber if and only if $s > 1$, and when $s > 1$, $s$ equals the index of the exceptional fiber of $H[\alpha]$.

Now we prove the second statement of the theorem. First, suppose $\alpha$ and $\beta$ have an R-R diagram of the form shown in Figure~\ref{SF_on_Mobius1a} with $s$ replaced by $1$, which is an augmentation of the R-R diagram of $\alpha$ in Figure~\ref{SF_on_Mobius1}. We will show that
$H[\alpha]$ is a Seifert-fibered space over the M\"obius band with no exceptional fibers whose regular fiber is represented by the curve $\beta$. Consider a properly embedded nonseparating annulus $\mathcal{A}$ in $H$ whose boundary consists of the curves $\beta$ and $\hat{\beta}$, where $\hat{\beta}$ is a curve illustrated in Figure~\ref{SF_on_Mobius5}.
Figure~\ref{SF_on_Mobius1b}a shows the genus two handlebody $H$, the simple closed curve $\alpha$ on $\partial H$, and the annulus $\mathcal{A}$ with $\partial \mathcal{A}=\beta\cup \hat{\beta}$ oriented, which realize the R-R diagram of $\alpha$ and $\beta$ in Figure~\ref{SF_on_Mobius1a} with $s=1$ and the R-R diagram of $\hat{\beta}$ in Figure~\ref{SF_on_Mobius5}. Let $H/\mathcal{A}$ be
the manifold obtained by cutting $H$ along $\mathcal{A}$. Let $\mathcal{A}_1$ and
$\mathcal{A}_2$, $\beta_1$ and $\beta_2$, and $\hat{\beta}_1$ and $\hat{\beta}_1$ be the copies of $\mathcal{A}$, $\beta$, and $\hat{\beta}$ in $H/\mathcal{A}$. Then it is easy to see that $H/\mathcal{A}$ is
a genus two handlebody such that $\mathcal{A}_1$ and
$\mathcal{A}_2$ together with $\partial{\mathcal{A}_1}=\beta_1\cup\hat{\beta}_1$ and $\partial{\mathcal{A}_2}=\beta_2\cup\hat{\beta}_2$ lie in
$\partial (H/\mathcal{A})$ as shown in Figure~\ref{SF_on_Mobius1b}b.

\begin{figure}[tbp]
\centering
\includegraphics[width = 0.65\textwidth]{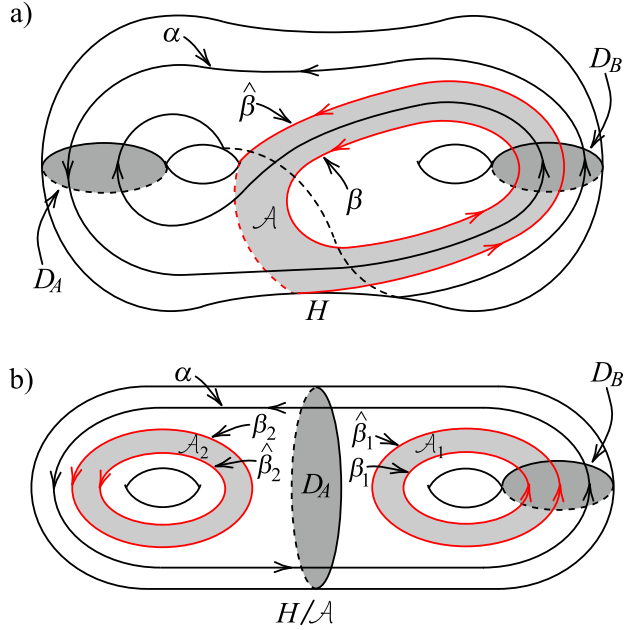}
\caption{The genus two handlebody $H$, the simple closed curve $\alpha$ on $\partial H$, and the annulus $\mathcal{A}$ with $\partial \mathcal{A}=\beta\cup \hat{\beta}$ oriented, which realize the R-R diagram of $\alpha$ in Figure~\ref{SF_on_Mobius1} with $s=1$ and the R-R diagram of $\beta$ and $\hat{\beta}$ in Figure~\ref{SF_on_Mobius5} in a), and the manifold $H/\mathcal{A}$ obtained by cutting $H$ along $\mathcal{A}$ in b), which is a genus two handlebody.}
\label{SF_on_Mobius1b}
\end{figure}

Since $\alpha$ is disjoint from $\mathcal{A}$, $H[\alpha]$ is obtained from $(H/\mathcal{A})[\alpha]$ by gluing the two copies $\mathcal{A}_1$ and $\mathcal{A}_2$
such that the orientations of their boundaries $\beta_1\cup\hat{\beta}_1$ and $\beta_2\cup\hat{\beta}_2$ match. However, we can observe from Figure~\ref{SF_on_Mobius1b}b that $\alpha$ is primitive in the genus two handlebody $H/\mathcal{A}$ and thus $(H/\mathcal{A})[\alpha]$ is a solid torus. Thus gluing $\mathcal{A}_1$ and $\mathcal{A}_2$ in the boundary of the solid
torus $(H/\mathcal{A})[\alpha]$ yields Seifert-fibered over the M\"obius band with no exceptional fibers such that $\beta$ is a regular fiber. Therefore $H[\alpha]$ is a Seifert-fibered space over the M\"obius band with no exceptional fibers whose regular fiber is represented by the curve $\beta$.

Now we suppose that $\alpha$ and $\beta$ have an R-R diagram of the form shown in Figure~\ref{SF_on_Mobius1a} with $s>1$. Similarly as in the proof of Theorem~\ref{Thm describing R-R diagrams of SF spaces over the disk}, the two parallel copies of $\beta$ bound an essential separating annulus $\mathcal{A}'$ in $H$ as shown in Figure~\ref{PSFFig2c}, which cuts $H$ apart into a genus two handlebody $W$ and a solid torus $V$. Note that $\alpha$ lies in the boundary of $W$. To complete the proof, it suffices to show that $W[\alpha]$ is a Seifert-fibered space over the M\"obius band with no exceptional fibers and the curve $\beta$ is a regular fiber of $W[\alpha]$.

A component of $D_B\cap W$ is a cutting disk $D_C$ of $W$ such that $\alpha$ and $\beta$ intersect $D_C$ transversely once. This implies that the R-R diagram of $\alpha$ and $\beta$ with respect to $\{D_A, D_C\}$ has the form in Figure~\ref{SF_on_Mobius1a} with $s$ replaced by $1$. Therefore by the argument above in the case that $s=1$, we see that $W[\alpha]$ is a Seifert-fibered space over the M\"obius band with no exceptional fibers and the curve $\beta$ is a regular fiber of $W[\alpha]$, as desired.
\end{proof}

\begin{rem}
In Theorem~\ref{Thm describing R-R diagrams of SF spaces over the Mobius band}, if $s=1$, then $H[\alpha]$ is a Seifert-fibered space $M$ over the M\"obius band with no exceptional fibers. Then $M$ admits another Seifert-fibered structure, i.e., $M$ is a Seifert-fibered space over $D^2$ with two exceptional fibers of both indexes 2. Therefore we can regard a Seifert-m curve $\alpha$ in this case as a Seifert-d curve, and thus we may assume $s>1$ in the R-R diagram of a Seifert-m curve $\alpha$.

(2) Figure~\ref{SF_on_Mobius1a} shows an R-R diagram of $\alpha$ such that $H[\alpha]$ is a Seifert-fibered space over the M\"obius band with one exceptional fiber of type $\omega / s$, whose regular fiber is the curve $\beta$.
\end{rem}

\begin{lem}
\label{R-R diagrams of nonseparating annuli}
Suppose $\mathcal{A}$ is an essential nonseparating annulus properly embedded in a genus two handlebody $H$. Let $\beta$ and $\hat{\beta}$ be the two curves in $\partial H$ that form $\partial \mathcal{A}$. Then the pair $\beta$, $\hat{\beta}$ have an R-R diagram that appears in Figure~\emph{\ref{SF_on_Mobius5}}.
\end{lem}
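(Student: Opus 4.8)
The plan is to pull $\mathcal{A}$ into a solid torus complementary to a cutting disk of $H$, and then read the R-R diagram off the resulting picture.

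\emph{First, I would arrange that $\mathcal{A}$ is disjoint from a nonseparating cutting disk $D_A$ of $H$.} Suppose first $\mathcal{A}$ is $\partial$-incompressible. Take any nonseparating cutting disk $D$ of $H$ and isotope it to minimize $|D\cap\mathcal{A}|$; circles of $D\cap\mathcal{A}$ are removed using incompressibility of $\mathcal{A}$ and irreducibility of $H$, and an outermost arc of $D\cap\mathcal{A}$ on $D$ cuts off a disk $D_0$ with $\partial D_0=a\cup b$, $a\subset\mathcal{A}$, $b\subset\partial H$, interior disjoint from $\mathcal{A}$. Since every arc in an annulus is either spanning or boundary-parallel, either $a$ is spanning --- then $D_0$ is a genuine $\partial$-compression disk for $\mathcal{A}$, contradicting $\partial$-incompressibility --- or $a$ is boundary-parallel, in which case $D_0$ can be used to further reduce $|D\cap\mathcal{A}|$. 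Hence $D\cap\mathcal{A}=\emptyset$. If instead $\mathcal{A}$ is $\partial$-compressible, $\partial$-compressing it produces a disk $D^{\ast}$ in $H$ with $\partial D^{\ast}\subset\partial H$; since $\mathcal{A}$ is not $\partial$-parallel, $\partial D^{\ast}$ is essential on $\partial H$, so $D^{\ast}$ is a cutting disk and may be isotoped off $\mathcal{A}$. Either way $\mathcal{A}$ misses some cutting disk; if that disk were separating it would split $H$ into two solid tori, one of which contains $\mathcal{A}$, and a short homological argument then forces $\mathcal{A}$ to separate $H$, contradicting that $\mathcal{A}$ is nonseparating. So we may take $D_A$ nonseparating with $D_A\cap\mathcal{A}=\emptyset$.

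\emph{Next, I would analyze $\mathcal{A}$ inside the solid torus $V=H|D_A$.} Here $\mathcal{A}$ is a properly embedded annulus whose boundary $\beta\cup\hat\beta$ misses the two meridian disks $D_A^{+},D_A^{-}\subset\partial V$ coming from $D_A$. By the classification of properly embedded annuli in a solid torus, $\beta$ and $\hat\beta$ are parallel essential curves on the torus $\partial V$, their longitudinal winding number $s$ is nonzero by incompressibility of $\mathcal{A}$ (and we may take $s\geq1$), and $\mathcal{A}$ is, up to isotopy, the annulus they span. Such an $\mathcal{A}$ separates $V$ into two pieces --- two solid tori, or, in the degenerate case $s=1$, a solid torus and a copy of $T^{2}\times I$ --- and $H$ is recovered by attaching the $1$-handle $\overline{N(D_A)}$ with foot $D_A^{+}$ on one piece and foot $D_A^{-}$ on the other. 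The condition that $\mathcal{A}$ be nonseparating in $H$ is exactly that $D_A^{+}$ and $D_A^{-}$ lie on opposite sides of $\mathcal{A}$, and this, together with the slope $s$, is what constrains the relative positions of the two meridian disks.

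\emph{Finally, I would translate this picture into an R-R diagram.} Choosing a second cutting disk $D_B$ adapted to the configuration (e.g.\ a meridian disk of $V$ positioned relative to $\mathcal{A}$ and the two feet of the $1$-handle), the curves $\beta$ and $\hat\beta$ have no connections in the $A$-handle because they miss $D_A$, while their common slope on $\partial V$ and the placement of the two $1$-handle feet dictate the band pattern in the $B$-handle; carrying this out should give exactly the diagram of Figure~\ref{SF_on_Mobius5}. I expect this last step to be the main obstacle --- faithfully converting the slope data and the two meridian-disk positions into the bands and their labels, and checking that the result is precisely the claimed diagram, including handling the degenerate case $s=1$ (showing it is either excluded or is a special instance of the figure). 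The reduction in the $\partial$-compressible case of the first step also needs some care to see that $D^{\ast}$ can genuinely be isotoped off $\mathcal{A}$.
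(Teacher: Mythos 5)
Your overall strategy is the same as the paper's: produce a nonseparating cutting disk $D_A$ disjoint from $\mathcal{A}$ by an outermost-arc argument (your $\partial$-compression disk $D^{\ast}$ is exactly the disk the paper constructs by doubling the outermost subdisk of $D_A$ cut off by a spanning arc $\sigma$ and gluing the two copies to $\overline{\mathcal{A}-N(\sigma)}$), then pass to the solid torus $V$ obtained by cutting $H$ along $D_A$ and read off the diagram. The first two steps are sound modulo the routine points you flag; you should also record, as the paper does, that the second disk $D_B$ can be chosen so that $\{D_A,D_B\}$ is a complete set of cutting disks with $D_B\cap\mathcal{A}$ consisting of $s>0$ spanning arcs (the same outermost argument applied to $D_B$, together with the observation that $|D_B\cap\mathcal{A}|>0$, since otherwise $\mathcal{A}$ would lie in a $3$-ball and could not be essential).

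The genuine problem is in your final step. It is not true that ``$\beta$ and $\hat{\beta}$ have no connections in the A-handle because they miss $D_A$'': disjointness from $D_A$ only forces any connection through the A-handle to be a $0$-connection, not the absence of connections. In fact exactly one of the two curves must cross the A-handle, and this is forced by the fact you yourself established, namely that $D_A^{+}$ and $D_A^{-}$ lie on opposite sides of $\beta\cup\hat{\beta}$ in $\partial V$. Any arc $\tau$ in $\partial V$ joining the two feet of the $1$-handle must therefore cross $\beta\cup\hat{\beta}$ an odd number of times; after minimizing one may take $|\tau\cap(\beta\cup\hat{\beta})|=1$, say $\tau$ meets $\hat{\beta}$ once, and one may further isotope a neighborhood of $D_A^{+}\cup D_A^{-}\cup\tau$ off $D_B$. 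Taking the A-handle to be a once-punctured-torus neighborhood of $\partial D_A\cup\tau$ disjoint from $\partial D_B$ then puts $\beta$ entirely inside the B-handle and makes $\hat{\beta}$ cross the A-handle in a single $0$-connection, which is precisely the form of Figure~\ref{SF_on_Mobius5}. Had both curves avoided the A-handle, both feet of the $1$-handle would lie on the same side of $\mathcal{A}$ and $\mathcal{A}$ would be separating. So the translation is not deferrable bookkeeping: the single $0$-connection of $\hat{\beta}$ is the content of the figure, and the reason you give would produce a different, and in fact impossible, diagram.
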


\begin{figure}[tbp]
\centering
\includegraphics[width = 0.6\textwidth]{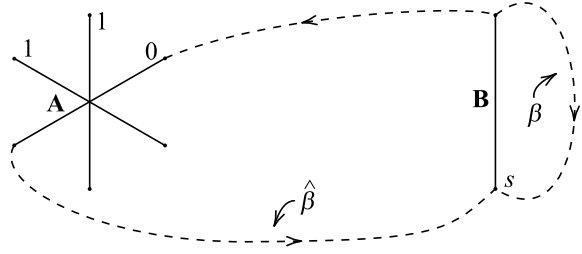}
\caption{If $\mathcal{A}$ is a nonseparating essential annulus in a genus two handlebody $H$ with $\partial \mathcal{A}$ = $\beta \cup \hat{\beta}$, then there exists $s > 0$ such that $\beta$ and $\hat{\beta}$ have an R-R diagram with the form of this figure.}
\label{SF_on_Mobius5}
\end{figure}

\begin{proof}
Given $\mathcal{A}$ and its boundary components $\beta$ and $\hat{\beta}$, we claim the following.

\begin{claim}\label{disjoint and spanning arcs}
There exists a complete set of cutting disks $\{D_A, D_B\}$ of $H$ such that one of the cutting disks, say $D_A$, is disjoint from $\mathcal{A}$ and $\mathcal{A} \cap D_B$ consists of a set of $s > 0$ essential spanning arcs in $\mathcal{A}$.
\end{claim}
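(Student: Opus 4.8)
The target is Claim~\ref{disjoint and spanning arcs}, and my plan is to fix a complete cutting system $\{D_1,D_2\}$ of $H$ together with a transverse copy of $\mathcal{A}$ minimizing the number of components of $\mathcal{A}\cap(D_1\cup D_2)$; to show first, by standard innermost-disk / outermost-arc surgeries, that in such a minimal position every component of $\mathcal{A}\cap(D_1\cup D_2)$ is a spanning arc of $\mathcal{A}$; and finally to modify the cutting system so that all of these spanning arcs lie on a single disk. Relabelling, that disk becomes $D_B$, the other becomes $D_A$, and $s$ is the (positive) number of spanning arcs on $D_B$. The curves $\beta$ and $\hat\beta$ then automatically appear in the R-R diagram associated to $\{D_A,D_B\}$ in the form of Figure~\ref{SF_on_Mobius5}.

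\textbf{Reduction to spanning arcs.} There are no simple closed curve components of $\mathcal{A}\cap(D_1\cup D_2)$: an innermost one bounds a disk $d\subset D_1$, say; if it is inessential in $\mathcal{A}$ it also bounds a disk in $\mathcal{A}$, and since $H$ is irreducible these cobound a ball across which $D_1$ can be isotoped, lowering the count; if it is essential in $\mathcal{A}$ then it is essential in $H$ because $\mathcal{A}$ is incompressible, contradicting that it bounds $d\subset H$. Likewise there are no boundary-parallel arcs of $\mathcal{A}\cap(D_1\cup D_2)$ in $\mathcal{A}$: an innermost such arc cuts a disk $\Delta$ out of $\mathcal{A}$ whose interior misses $D_1\cup D_2$, and a standard surgery using $\Delta$ replaces the cutting system by one meeting $\mathcal{A}$ in strictly fewer components, again contradicting minimality. (It is also worth recording that a nonseparating essential annulus is automatically $\partial$-incompressible: a $\partial$-compression would exhibit $\mathcal{A}$ as the frontier of a regular neighbourhood of the union of a disk and an arc in $\partial H$, and such a frontier is null-homologous in $H_2(H,\partial H)$, hence separating.) So $\mathcal{A}\cap(D_1\cup D_2)$ consists of spanning arcs; since all spanning arcs of an annulus are isotopic, these arcs are mutually parallel in $\mathcal{A}$, cutting $\mathcal{A}$ into $p$ rectangles $R_1,\dots,R_p$ arranged cyclically, each having two opposite sides on $D_1\cup D_2$ and the other two on $\beta$ and on $\hat\beta$. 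Moreover $p\ge 1$: were $\mathcal{A}$ disjoint from the whole cutting system it would lie in the ball obtained by cutting $H$ along $D_1\cup D_2$, forcing it to be $\partial$-parallel in $H$, contrary to hypothesis.

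\textbf{Consolidation onto one disk, and the main obstacle.} If only one of $D_1,D_2$ meets $\mathcal{A}$ we are done with $s=p$. Otherwise, running around the cyclic family $R_1,\dots,R_p$ one finds a rectangle $R$ with one side $a\subset D_1$ and the opposite side $a'\subset D_2$, and the idea is to use $R$ to guide a modification of the cutting system --- sliding a disk along $R$ past the other disk, adjusting the companion disk by a parallel copy of $R$ as needed --- that produces a new complete cutting system meeting $\mathcal{A}$ in no more arcs while moving the intersection pattern toward a single disk, and then to induct. This consolidation step is the main obstacle: one must verify that the modified pair of disks still cuts $H$ into a ball (so that it is a genuine complete cutting system) and that the slide creates no new intersections with $\mathcal{A}$ elsewhere, and it is precisely here that the hypothesis that $\mathcal{A}$ is \emph{nonseparating} is essential --- for a separating essential annulus the complementary rectangles can wrap around the handles in a way that blocks the reduction, which is why a separating essential annulus need not be disjoint from any cutting disk. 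Once the induction terminates, all $s>0$ spanning arcs lie on one disk $D_B$ while $D_A$ is disjoint from $\mathcal{A}$, which is exactly the assertion of the claim; the shape of Figure~\ref{SF_on_Mobius5} is then just a matter of reading off the positions of $\beta$ and $\hat\beta$ with respect to $\{D_A,D_B\}$.
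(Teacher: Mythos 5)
Your reduction to spanning arcs is fine, but the proof has a genuine gap exactly where you say it does: the ``consolidation onto one disk'' step is announced as the main obstacle and then never carried out. Nothing in your text specifies the disk slide, verifies that the modified pair is still a complete cutting system, shows that no new intersections with $\mathcal{A}$ are created, identifies a complexity that strictly decreases so the induction terminates, or pins down where nonseparation of $\mathcal{A}$ actually enters. Since the entire content of the claim is that the intersections can be pushed off one disk entirely, what remains is a restatement of the goal rather than a proof. Moreover, starting from a complete system that minimizes $|\mathcal{A}\cap(D_1\cup D_2)|$ does not obviously converge to the desired configuration: a slide that merely preserves the total count need not make progress, and a slide guided by a rectangle $R$ joining $D_1$ to $D_2$ can a priori reintroduce intersections on the disk you are trying to clear.

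The paper avoids this difficulty by not working with a complete system at the outset. It first proves the existence of a \emph{single} cutting disk disjoint from $\mathcal{A}$: take a cutting disk $D_A$ minimizing $|D_A\cap\mathcal{A}|$; circles and arcs that are boundary-parallel in $\mathcal{A}$ are removed as you do; and if a spanning arc $\sigma$ remains, take one that is outermost in $D_A$, cut $\mathcal{A}$ along $\sigma$ to get a disk $D^*$, and glue two copies of the outermost subdisk of $D_A$ to $D^*$ along the two copies of $\sigma$. The result is a disk $D_C$ disjoint from $\mathcal{A}$ which is nonseparating \emph{because $\mathcal{A}$ is nonseparating} (this is the one place that hypothesis is used, and it is used for a concrete homological conclusion, not as a heuristic), contradicting minimality. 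Only then is $D_B$ chosen to complete $\{D_A,D_B\}$ while minimizing $|D_B\cap\mathcal{A}|$, and the same innermost/outermost surgeries show $D_B\cap\mathcal{A}$ consists of $s>0$ spanning arcs. If you want to salvage your argument, the boundary-compression construction of $D_C$ from $\mathcal{A}$ itself is the missing idea; without it, or a fully justified substitute for the consolidation step, the proof is incomplete.
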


\begin{figure}[tbp]
\centering
\includegraphics[width = 0.5\textwidth]{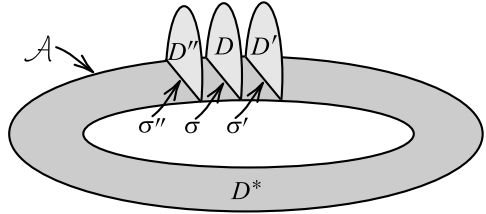}
\caption{Gluing two copies $D'$ and $D''$ of $D$ to the disk $D^*$ of $\mathcal{A}$
along $\sigma'$ and $\sigma''$ yields a disk $D_C$.}
\label{SF_on_Mobius5b}
\end{figure}
\begin{proof}
First, we show that there exists a cutting disk of $H$ disjoint from $\mathcal{A}$. Let $D_A$ be a cutting disk of $H$ which intersects $\mathcal{A}$ minimally.

Suppose $D_A\cap \mathcal{A}\neq \varnothing$. Then we may assume that $D_A$ intersects $\mathcal{A}$ essentially and $D_A\cap \mathcal{A}$ consists of properly embedded disjoint arcs and disjoint circles. However by the incompressibility of $\mathcal{A}$, irreducibility of $H$, and the minimality condition rule out circle intersections.
Suppose $\gamma$ is an outermost arc of $D_A\cap \mathcal{A}$ which cuts a disk $D$ of $\mathcal{A}$. Then $\gamma$ also cuts $D_A$ into two subdisks $D_1$ and $D_2$ of $D_A$. Consider two disks $D\cup_\gamma D_1$ and $D\cup_\gamma D_2$, which are obtained by gluing $D$ and $D_1$, and $D$ and $D_2$ respectively along $\gamma$. Then since
$D_A$ is a cutting disk which means that it is nonseparating, at least one of $D\cup_\gamma D_1$ and $D\cup_\gamma D_2$ is nonseparting and thus is a cutting disk. But this cutting disk intersects $\mathcal{A}$ less than $D_A$. This is a contradiction to the minimality.

Suppose $\sigma$ is a spanning arc of $D_A\cap \mathcal{A}$ in $\mathcal{A}$ which is outermost in $D_A$. Let $D'$ and $D''$ be two copies of $D$, and let $\sigma'=D'\cap \mathcal{A}$ and $\sigma''=D''\cap \mathcal{A}$
as shown in Figure~\ref{SF_on_Mobius5b}. Also let $D^*$ be a disk $\mathcal{A}-N(\sigma)$ such that $\sigma'$ and $\sigma''$ are included in $\partial D^*$.
Gluing the two copies $D'$ and $D''$ to the disk $D^*$ along $\sigma'$ and $\sigma''$ yields a disk $D_C$. Since $\mathcal{A}$ is nonseparating, from the construction $D_C$ is nonseparating (and thus is a cutting disk) and also does not intersect $\mathcal{A}$, a contradiction. Therefore there exists a cutting disk $D_A$ of $H$ disjoint from $\mathcal{A}$.

Now let $D_B$ be a cutting disk of $H$ chosen so that $\{D_A, D_B\}$ is a complete set of cutting disks of $H$ and $D_B$ intersects $\mathcal{A}$ minimally. By applying the same argument above, we can show that $D_B\cap \mathcal{A}$ consists of spanning arcs of $\mathcal{A}$. Note that $|D_B\cap \mathcal{A}|>0$, otherwise $\mathcal{A}$ would embed properly in a 3-ball and thus not be essential. This completes the proof of the claim.
\end{proof}

By the claim, there exists a complete set of cutting disks $\{D_A, D_B\}$ of $H$ such that $D_A$ is disjoint from $\mathcal{A}$ and $\mathcal{A} \cap D_B$ consists of a set of $s > 0$ essential spanning arcs in $\mathcal{A}$. Now consider the solid torus $V$ obtained by cutting $H$ open along $D_A$. Then $\partial V$ contains two disks $D_A^+$ and $D_A^-$, which are copies of $D_A$.
The simple closed curves $\beta$ and $\hat{\beta}$ also lie in $\partial V$ and cut $\partial V$ into two annuli, say $A^+$ and $A^-$. Since $\beta$ and $\hat{\beta}$ are not isotopic in $\partial H$, $D_A^+$ lies in the interior of one of these annuli, and $D_A^-$ lies in the interior of the other.
It is easy to see that there exists an arc $\tau$ in $\partial V$ connecting $D_A^+$ and $D_A^-$ which $|\tau \cap (\beta \cup \hat{\beta})| = 1$. Furthermore
since a regular neighborhood of $D_A^+\cup D_A^-\cup \tau$ is a disk in $\partial V$, we can isotope $D_A^+\cup D_A^-\cup \tau$ keeping $|\tau \cap (\beta \cup \hat{\beta})| = 1$
so that $D_A^+\cup D_A^-\cup \tau$ is disjoint from the meridional disk $D_B$ of $V$. Since $|\tau \cap (\beta \cup \hat{\beta})| = 1$, now we may assume that $|\tau \cap\hat{\beta}| = 1$.

Next, let $T_A$ be a once-punctured torus in $\partial H$ which is a regular neighborhood in $\partial H$ of $\tau \cup \partial D_A$, with $T_A$ chosen so that $T_A$ is disjoint from $\partial D_B$, and so that $T_A$ intersects $\beta \cup \hat{\beta}$ minimally. Then, if $\mathcal{D}$ is the R-R diagram of $\beta$ and $\hat{\beta}$, whose A-handle corresponds to $T_A$, then $\hat{\beta}$ crosses the A-handle of $\mathcal{D}$ in a single connection without intersecting $D_A$, while $\beta$ lies completely in the B-handle of $\mathcal{D}$. So $\mathcal{D}$ has the form of the diagram in Figure~\ref{SF_on_Mobius5}.
\end{proof}

\begin{lem}
\label{diagrams of curves disjoint from nonseparating annuli}
If $\mathcal{D}$ is an R-R diagram with the form of Figure \emph{\ref{SF_on_Mobius5}}, and a simple closed curve $\alpha$ is added to $\mathcal{D}$ so that $\alpha$ is disjoint from $\beta$ and $\hat{\beta}$ in $\mathcal{D}$, and $\alpha$ intersects every cutting disk of $H$, then the resulting R-R diagram must have the form shown in Figure \emph{\ref{SF_on_Mobius2}}.
\end{lem}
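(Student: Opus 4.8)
The plan is to construct every admissible R-R diagram of $\alpha$ by placing $\alpha$ on top of the fixed diagram $\mathcal{D}$ of Figure~\ref{SF_on_Mobius5} and analyzing it in the complement of $\beta\cup\hat\beta$ in $\partial H$, reading off the constraints from the concrete model of $\mathcal{D}$ built in the proof of Lemma~\ref{R-R diagrams of nonseparating annuli}. Recall from that proof that the $B$-handle of $\mathcal{D}$ is a once-punctured torus $T_B$ containing $\beta$ (with $\beta$ meeting $\partial D_B$ in $s$ points), while the $A$-handle is a once-punctured torus $T_A$, a regular neighborhood of $\tau\cup\partial D_A$, across which $\hat\beta$ runs as a single $0$-connection. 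Because $\alpha$ is disjoint from $\beta$ and from $\hat\beta$, every arc of $\alpha\cap T_B$ is an arc of the surface obtained by cutting $T_B$ along $\beta$, and every arc of $\alpha\cap T_A$ is an arc of the surface obtained by cutting $T_A$ along the band of $\hat\beta$.

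First I would determine the local form of $\alpha$ in each handle. Cutting $T_B$ along the essential nonseparating curve $\beta$ yields a pair of pants with cuffs $\partial T_B$ and the two copies $\beta^{+},\beta^{-}$ of $\beta$; an embedded arc of $\alpha$ in it has both endpoints on $\partial T_B$, so up to isotopy it is inessential or parallel to one of $\beta^{\pm}$, whence each band of $\alpha$ in the $B$-handle carries the R-R label $0$ or $\pm s$. Likewise, cutting $T_A$ along the (essential, nonseparating) arc of $\hat\beta$ yields an annulus whose core is $\partial D_A$, since $\partial D_A$ is essential in $T_A$ and disjoint from that arc; an embedded arc of $\alpha$ in the annulus meets the core at most once, so each band of $\alpha$ in the $A$-handle carries the label $0$ or $\pm1$. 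Splicing these bands together into a single simple closed curve, subject to embeddedness and to the requirement that the two ends of each band be matched across the separating curve $\partial T_A=\partial T_B$, leaves only finitely many configurations; sorting through them (conveniently by passing to hybrid diagrams, as in Section~\ref{RRdiagram}) yields an R-R diagram of $\alpha$ in the two-parameter family depicted in Figure~\ref{SF_on_Mobius2}, with weights $a,b\ge0$ and $s>0$.

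Next I would eliminate the degenerate members of that family using the hypothesis that $\alpha$ meets every cutting disk of $H$. If every $A$-band of $\alpha$ had label $0$ then $\alpha$ would be disjoint from $D_A$; if every $B$-band had label $0$ then $\alpha$ would be disjoint from $D_B$; if $\alpha$ reduced to a single connection in each handle it would be either primitive, hence disjoint from a cutting disk by Lemma~\ref{primitivity}, or isotopic on $\partial H$ to $\beta$ or $\hat\beta$; and any surviving configuration in which $\alpha$ retains a $0$-connection in both handles at once would leave $\alpha$ disjoint from the cutting disk obtained by bandsumming $D_A$ with $D_B$ along a suitable arc of $\alpha$. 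Discarding all of these leaves precisely the R-R diagram of Figure~\ref{SF_on_Mobius2}. I emphasize that the present lemma asserts only the \emph{form} of the diagram; Lemmas~\ref{must have (a,b) = (0,1) when s > 1} and \ref{must have (a,b) = (0,1) when s = 1} afterwards invoke the Seifert hypothesis to force $(a,b)=(0,1)$ and collapse Figure~\ref{SF_on_Mobius2} to Figure~\ref{SF_on_Mobius1}.

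The main obstacle is the enumeration in the middle step: one must check carefully that no band of $\alpha$ can wind around $\beta$ (or around $\partial D_A$) more than once while staying embedded and disjoint from $\beta\cup\hat\beta$, and then control the bounded but not obvious number of ways the $A$-handle and $B$-handle bands of $\alpha$ can be joined up across the separating curve into one simple closed curve. A secondary point to watch is the bookkeeping of R-R equivalences: one must confirm that the resulting picture is literally of the \emph{form} of Figure~\ref{SF_on_Mobius2} after, if necessary, reversing the orientation of $A$ or $B$ and possibly interchanging the two handles, rather than merely being homeomorphic to such a diagram.
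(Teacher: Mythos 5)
Your overall strategy (constrain the connections of $\alpha$ in each handle by cutting along $\beta$ and $\hat\beta$, then assemble and prune) runs parallel to the paper's, and your first step --- that every band of $\alpha$ carries label $0$ or $\pm 1$ on the A-handle and $0$ or $\pm s$ on the B-handle --- is correct and is implicit in the paper. But there is a genuine gap in the pruning step. Your eliminations dispose of the configurations in which \emph{all} A-bands are $0$-connections (disjoint from $D_A$), \emph{all} B-bands are $0$-connections (disjoint from $D_B$), or $\alpha$ is a single connection in each handle; none of these arguments touches the mixed case in which $\alpha$ has \emph{both} $0$-connections and $1$-connections on the A-handle while carrying genuine $s$-connections on the B-handle. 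Such a curve would intersect $D_A$ and $D_B$, so the hypothesis that $\alpha$ meets every cutting disk does not exclude it, and your fallback claim --- that any configuration retaining a $0$-connection is disjoint from a cutting disk obtained by bandsumming $D_A$ with $D_B$ along an arc of $\alpha$ --- is asserted without justification and is not true in this generality. The paper closes exactly this case by a different mechanism: Figure~\ref{SF_on_Mobius4} shows that a simple closed curve disjoint from $\beta$ and $\hat\beta$ with both a $0$-connection and a $1$-connection on the A-handle is forced to spiral endlessly, i.e.\ it can never close up as an embedded curve. This is a combinatorial impossibility argument, not a consequence of the cutting-disk hypothesis, and it is the key missing idea in your proposal; once it is in place, all A-handle connections are $1$-connections and the assembly into the two-parameter family of Figure~\ref{SF_on_Mobius2} is routine.

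Two smaller points. First, your appeal to Lemma~\ref{primitivity} for the ``single connection in each handle'' case works (a primitive curve is disjoint from the complementary cutting disk constructed as in Theorem~\ref{R-R diagram of primitive curves}), but it is a detour the paper does not need, since that case is already subsumed once the mixed case is excluded. Second, your closing caveat about checking that no band winds around $\beta$ or $\partial D_A$ more than once is the easy part (it follows from your pair-of-pants and annulus decompositions); the hard part you flag only vaguely --- ``control the number of ways the bands can be joined up'' --- is precisely where the spiraling obstruction lives, so the proposal identifies the location of the difficulty without supplying the argument that resolves it.
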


\begin{figure}[tbp]
\centering
\includegraphics[width = 0.65\textwidth]{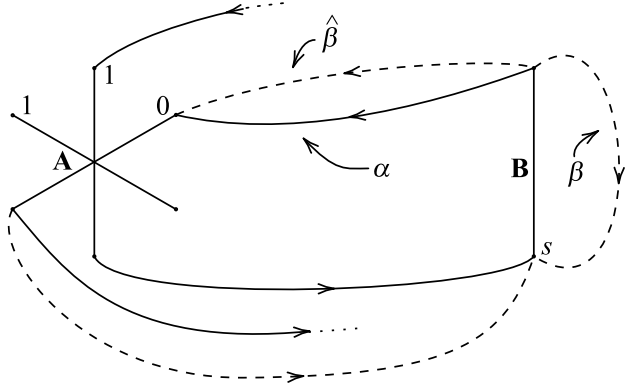}
\caption{This figure shows that a simple closed curve $\alpha$ on $\partial H,$ which is disjoint from the curves $\beta$ and $\hat{\beta}$ forming $\partial \mathcal{A}$, can not contain both a 0-connection and a 1-connection on the A-handle of this diagram.  (Otherwise $\alpha$ is forced to spiral endlessly.)}
\label{SF_on_Mobius4}
\end{figure}

\begin{figure}[tbp]
\centering
\includegraphics[width = 0.75\textwidth]{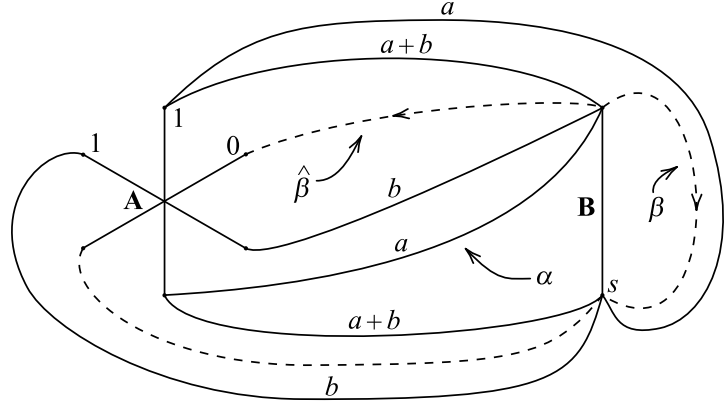}
\caption{If a 2-handle is added to the genus two handlebody $H$ of Figure~\ref{SF_on_Mobius5} along a simple closed curve $\alpha$, disjoint from the curves $\beta$ and $\hat{\beta}$ forming $\partial \mathcal{A}$, the annulus $\mathcal{A}$ of Figure~\ref{SF_on_Mobius5} is essential in $H[\alpha]$, and $\alpha$ intersects every cutting disk of $H$, then there are nonnegative integers $a$ and $b$ such that $\alpha$, $\beta$ and $\hat{\beta}$ have an R-R diagram with the form of this figure.}
\label{SF_on_Mobius2}
\end{figure}

\begin{proof}
Since $\alpha$ must intersect every cutting disk of $H$, it must traverse both the A-handle and the B-handle of $\mathcal{D}$. In particular, $\alpha$ must have 1-connections on the A-handle of $\mathcal{D}$ and s-connections on the B-handle of $\mathcal{D}$. Now Figure~\ref{SF_on_Mobius4} shows that $\alpha$ can't have both 1-connections and 0-connections on the A-handle of $\mathcal{D}$, otherwise $\alpha$ is forced to spiral endlessly. So $\alpha$ has only 1-connections on the A-handle of $\mathcal{D}$. Now it is not hard to see that there must exist nonnegative integers $a$ and $b$ such that the diagram of $\alpha$, $\beta$, and $\hat{\beta}$ has the form of Figure~\ref{SF_on_Mobius2}.
\end{proof}

\begin{lem}
\label{must have (a,b) = (0,1) when s > 1}
Suppose $\alpha$ has an R-R diagram with the form of Figure~\emph{\ref{SF_on_Mobius2}}, $H[\alpha]$ is Seifert-fibered over the M\"obius band, and $s > 1$ in Figure~\emph{\ref{SF_on_Mobius2}}. Then $(a,b) = (0,1)$, Figure~\emph{\ref{SF_on_Mobius2}} reduces to Figure~\emph{\ref{SF_on_Mobius1}}, and $\alpha = AB^sA^{-1}B^s$ in $\pi_1(H)$. Furthermore, the core of the B-handle of $H$ is an exceptional fiber of index $s$ in the Seifert-fibration of $H[\alpha]$.
\end{lem}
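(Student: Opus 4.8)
The plan is to reduce the lemma to its $s=1$ counterpart, Lemma~\ref{must have (a,b) = (0,1) when s = 1}, by cutting $H$ along a separating annulus, exactly as in the proof of Theorem~\ref{Thm describing R-R diagrams of SF spaces over the disk}.

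First I would read off from Figure~\ref{SF_on_Mobius2} that $\beta$ lies entirely in the B-handle with $[\beta]=B^{s}$, so $|\beta\cap\partial D_A|=0$ and $|\beta\cap\partial D_B|=s>1$. By the observation recorded in Figure~\ref{PSFFig2c}, two parallel copies of $\beta$ then bound an essential separating annulus $\mathcal{A}'$ in $H$, and cutting $H$ apart along $\mathcal{A}'$ produces a genus two handlebody $W$ together with a solid torus $V$ whose core is the core $C_B$ of the B-handle, with $\mathcal{A}'$ wrapping $s$ times longitudinally around $V$. Since $\alpha$ is disjoint from $\beta$, it can be isotoped off $\mathcal{A}'$, so $\alpha$ lies on $\partial W$. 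A component $D_C$ of $D_B\cap W$ is a cutting disk of $W$, $\{D_A,D_C\}$ is a complete set of cutting disks of $W$, and tracing the arcs of $\alpha$ one sees that the R-R diagram of $\alpha$, $\beta$, $\hat{\beta}$ on $\partial W$ with respect to $\{D_A,D_C\}$ has exactly the form of Figure~\ref{SF_on_Mobius2} with the $s$-connections in the B-handle replaced by $1$-connections and the same integers $a$, $b$.

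Next I would establish the key equivalence: $H[\alpha]$ is Seifert-fibered over the M\"obius band if and only if $W[\alpha]$ is a Seifert-fibered space over the M\"obius band with no exceptional fibers in which $\beta$ is a regular fiber. For the backward direction, if $W[\alpha]$ carries such a fibration then $\partial\mathcal{A}'$ is a pair of regular fibers, so $\mathcal{A}'$ is vertical in $\partial W[\alpha]$; regluing $V$ along $\mathcal{A}'\subset\partial V$ extends the fibration over all of $H[\alpha]$, introducing a single new exceptional fiber, namely the core $C_B$ of $V$, whose index is the longitudinal winding number $s>1$ of $\mathcal{A}'$ about $V$, while the base orbifold remains a M\"obius band; this simultaneously gives the ``Furthermore'' clause. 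For the forward direction, since $H[\alpha]$ is Seifert-fibered and not a solid torus it is irreducible and $\partial$-irreducible (its torus boundary being incompressible), so $\mathcal{A}'$ is essential in $H[\alpha]$: it is incompressible because $\beta$ is essential in the torus $\partial H[\alpha]$, and it is not $\partial$-parallel because the solid torus $V$, around which $\mathcal{A}'$ wraps $s>1$ times, sits on one side of it. By the results of \cite{H07} already cited in the proof of Theorem~\ref{Thm describing R-R diagrams of SF spaces over the Mobius band}, $\mathcal{A}'$ may be isotoped to be vertical, and then cutting along it exhibits $W[\alpha]$ as Seifert-fibered over the M\"obius band; since $H[\alpha]$ has at most one exceptional fiber and that fiber must lie inside $V$ (a solid torus fibered so that a boundary annulus wraps $s>1$ times longitudinally necessarily contains an exceptional fiber), $W[\alpha]$ has none, and $\beta$ is a fiber.

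With the equivalence in hand the lemma follows at once: applying Lemma~\ref{must have (a,b) = (0,1) when s = 1} to the R-R diagram of $\alpha$ on $\partial W$, whose B-handle carries $1$-connections, forces $(a,b)=(0,1)$, so Figure~\ref{SF_on_Mobius2} collapses to Figure~\ref{SF_on_Mobius1} and $\alpha=AB^{s}A^{-1}B^{s}$ in $\pi_1(H)=F(A,B)$, while the backward direction of the equivalence identifies the core $C_B$ of the B-handle as the exceptional fiber of index $s$. I expect the main obstacle to be the forward half of the equivalence, namely checking that $\mathcal{A}'$ is genuinely essential in $H[\alpha]$ and isotopic to a vertical annulus, and bookkeeping that the (at most one) exceptional fiber of $H[\alpha]$ is swallowed by $V$ so that $W[\alpha]$ is fibered over the M\"obius band with none left over; the rest is routine arc-tracing in the R-R diagram and standard Seifert-fibered gluing. (If one prefers to avoid invoking the $s=1$ lemma here, the same conclusion can be reached by computing $\pi_1(H[\alpha])=F(A,B)/\langle\langle[\alpha]\rangle\rangle$ directly from Figure~\ref{SF_on_Mobius2} and checking it is the fundamental group of a Seifert-fibered space over the M\"obius band only when $(a,b)=(0,1)$, but the cutting argument is cleaner and gives the exceptional fiber for free.)
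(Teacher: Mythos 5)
Your setup matches the paper's: both cut $H$ along the separating annulus $\mathcal{A}'$ bounded by two parallel copies of $\beta$ into a solid torus $V$ (with core the core of the B-handle, an exceptional fiber of index $s$) and a genus two handlebody $W$ with $\alpha\subset\partial W$, and both conclude that $W[\alpha]$ is Seifert-fibered over the M\"obius band with no exceptional fibers. Your treatment of the ``Furthermore'' clause is also the paper's.

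The problem is the final step. You propose to finish by applying Lemma~\ref{must have (a,b) = (0,1) when s = 1} to the induced diagram of $\alpha$ on $\partial W$, but in this paper that lemma is itself proved by Dehn surgery reduction back to the $s>1$ case --- its proof ends with ``the argument used in Lemma~\ref{must have (a,b) = (0,1) when s > 1} applies.'' So your argument is circular within the paper's logical structure, and, more substantively, it omits the actual mathematical content that pins down $(a,b)=(0,1)$. That content is: $\pi_1(W[\alpha])=\langle x,y\mid x^2y^2\rangle$, which by Zieschang's theorem \cite{Z77} has a single Nielsen equivalence class of generating pairs, so the relator obtained from Figure~\ref{SF_on_Mobius2} by setting $s=1$ must be carried to a cyclic conjugate of $(x^2y^2)^{\pm1}$ by an automorphism of the free group; since the underlying Heegaard diagram (Figure~\ref{SF_on_Mobius3}) shows this relator already has minimal length under automorphisms, one gets $a+b=1$, and $(a,b)=(1,0)$ is excluded because $ABA^{-1}B^{-1}$ is a separating commutator and not an automorph of $(A^2B^2)^{\pm1}$. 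None of this appears in your proposal (your parenthetical alternative gestures at it but gives no details), so as written the key conclusion $(a,b)=(0,1)$ is not established. There is also a smaller unchecked point: Lemma~\ref{must have (a,b) = (0,1) when s = 1} presupposes the full diagram of Figure~\ref{SF_on_Mobius2}, including the nonseparating annulus with boundary $\beta\cup\hat\beta$, and you would need to verify that this data descends to $W$ after cutting along $\mathcal{A}'$.
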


\begin{proof}
Since the annulus $\mathcal{A}$ is vertical in the Seifert-fibration of $H[\alpha]$, its boundary components $\beta$ and $\hat{\beta}$ are regular fibers. Then as in Figure~\ref{PSFFig2c} two parallel copies of $\beta$ bound an essential separating annulus $\mathcal{A}'$ in $H$ such that $\mathcal{A}'$ is saturated and vertical in the Seifert-fibration of $H[\alpha]$, and $\mathcal{A}'$ cuts $H$ into a solid torus $V$ and a genus two handlebody $W$, with $\alpha$ lying on $\partial W$.

Let $\lambda$ be the core of $V$. (Note that $\lambda$ is also a core of the B-handle of $H$.) Then, since $s > 1$, $\lambda$ is an exceptional fiber of index $s$ in the Seifert-fibration of $H[\alpha]$. And, since the Seifert-fibration of $H[\alpha]$ can have at most one exceptional fiber, $\lambda$ is the only exceptional fiber in $H[\alpha]$.
It follows that the manifold $W[\alpha]$ is Seifert-fibered over the M\"obius band with no exceptional fibers. Then, using well-known formulas for presentations of Seifert-fibered spaces, one gets $\pi_1(W[\alpha])$ = $\langle x, y \,|\,x^2y^2 \rangle$.

By a result of Zieschang in \cite{Z77}, $\pi_1(W[\alpha])$ has only one Nielsen equivalence class of generators. It follows that if $\langle x, y \,|\,\mathcal{R} \rangle$ is a one-relator  presentation of $\pi_1(W[\alpha])$, then there is an automorphism of the free group $F(x,y)$
which carries $\mathcal{R}$ onto a cyclic conjugate of $x^2y^2$ or its inverse.

Now it is not hard to see that one obtains a one-relator presentation of $\pi_1(W[\alpha])$ from the one-relator presentation $\langle A, B \,|\,\alpha \rangle$ of $\pi_1(H[\alpha])$ by setting $s = 1$ in Figure~\ref{SF_on_Mobius2}. But if $s = 1$ in
Figure~\ref{SF_on_Mobius2}, then the Heegaard diagram underlying the R-R diagram in Figure~\ref{SF_on_Mobius2} has a graph with the form of Figure~\ref{SF_on_Mobius3}. This graph shows $\alpha$ has minimal length under automorphisms of $F(A,B)$. It follows that if $\alpha$ is a cyclic conjugate of $A^2B^2$ or its inverse in $F(A,B)$, then $a + b = 1$. So $(a,b) = (1,0)$ or $(a,b) = (0,1)$. If $(a,b) = (1,0)$, then $\alpha = ABA^{-1}B^{-1}$, which is a commutator, $\alpha$ separates $\partial H$, and $\alpha$ is not an automorph of $(A^2B^2)^{\pm1}$ in $F(A,B)$. The only remaining possibility is $(a,b) = (0,1)$. In this case, Figure~\ref{SF_on_Mobius2} reduces to Figure~\ref{SF_on_Mobius1}, and $\alpha = AB^sA^{-1}B^s$ in $\pi_1(H)$, as desired.
\end{proof}

\begin{lem}
\label{must have (a,b) = (0,1) when s = 1}
Suppose $\alpha$ has an R-R diagram with the form of Figure~\emph{\ref{SF_on_Mobius2}}, $H[\alpha]$ is Seifert-fibered over the M\"obius band, and $s = 1$ in Figure~\emph{\ref{SF_on_Mobius2}}. Then $(a,b) = (0,1)$, Figure~\emph{\ref{SF_on_Mobius2}} reduces to Figure~\emph{\ref{SF_on_Mobius1}}, and $\alpha = ABA^{-1}B$ in $\pi_1(H)$. Furthermore, the Seifert-fibration of $H[\alpha]$ has no exceptional fibers.
\end{lem}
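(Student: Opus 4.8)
The plan is to rerun the argument of Lemma~\ref{must have (a,b) = (0,1) when s > 1}, replacing the step that exploited the index-$s$ exceptional fiber of the solid torus $V$, which is merely a regular fiber once $s = 1$, by one that cuts $H[\alpha]$ open along the vertical non-separating annulus $\mathcal{A}$ with $\partial\mathcal{A}=\beta\cup\hat{\beta}$. First I would observe, exactly as in Lemma~\ref{must have (a,b) = (0,1) when s > 1}, that $\mathcal{A}$ is vertical in the Seifert-fibration of $H[\alpha]$, so it lies over a properly embedded essential arc of the M\"obius band base; every such arc is non-separating and may be isotoped off the (at most one) exceptional fiber. Since $\partial\mathcal{A}$ is disjoint from $\alpha$, the annulus $\mathcal{A}$ misses the attaching region of the $2$-handle, so cutting $H[\alpha]$ along $\mathcal{A}$ yields $H'[\alpha]$, where $H'$ is the genus two handlebody obtained by cutting $H$ along $\mathcal{A}$ (compare Figure~\ref{SF_on_Mobius1b}). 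On the Seifert side, cutting a Seifert-fibered space over the M\"obius band along such an annulus replaces the base by a disk while keeping the exceptional fibers, and a Seifert-fibered space over $D^2$ with at most one exceptional fiber is a solid torus. Hence $H'[\alpha]$ is a solid torus; that is, $\alpha$ is primitive in $H'$, whether or not $H[\alpha]$ has an exceptional fiber.

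The next step is to pin down $a$ and $b$. Using Lemma~\ref{R-R diagrams of nonseparating annuli} and the model picture of Figure~\ref{SF_on_Mobius1b} to locate $\mathcal{A}$ inside the R-R diagram of Figure~\ref{SF_on_Mobius2} with $s = 1$, I would read off an R-R diagram of $\alpha$ on $\partial H'$; combined with Lemma~\ref{primitivity}, primitivity of $\alpha$ in $H'$ should force $a + b = 1$. (When $s = 1$ the Heegaard graph underlying Figure~\ref{SF_on_Mobius2} already has the form of Figure~\ref{SF_on_Mobius3}, which gives useful length control on $\alpha$ in $F(A,B)$ along the way.) Thus $(a,b) = (0,1)$ or $(1,0)$. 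If $(a,b) = (1,0)$, then $\alpha = ABA^{-1}B^{-1}$ is null-homologous on $\partial H$, so $H_1(H[\alpha])\cong\Z\oplus\Z$ has rank $2$, whereas an orientable Seifert-fibered space over the M\"obius band with torus boundary has first homology of rank $1$; this contradiction rules out $(1,0)$. Therefore $(a,b) = (0,1)$, Figure~\ref{SF_on_Mobius2} reduces to Figure~\ref{SF_on_Mobius1}, and $\alpha = ABA^{-1}B$ in $\pi_1(H)$.

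It remains to show $H[\alpha]$ has no exceptional fiber. With $\alpha = ABA^{-1}B$ one has $[\vec{\alpha}] = (0,2)$, so $H_1(H[\alpha])\cong\Z\oplus\Z_2$, whose torsion subgroup has order $2$. But an orientable Seifert-fibered space over the M\"obius band with an exceptional fiber of index $n\ge 2$ has first homology with torsion of order $2n\ge 4$, so $H[\alpha]$ can have no exceptional fiber. (Alternatively, once $(a,b) = (0,1)$ the absence of exceptional fibers is immediate from the converse direction of Theorem~\ref{Thm describing R-R diagrams of SF spaces over the Mobius band}, which is established via the cut-and-reglue picture of Figure~\ref{SF_on_Mobius1b}.)

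I expect the middle step to be the main obstacle: without the index-$s$ exceptional fiber that powered the case $s > 1$, one must track the position of the vertical annulus $\mathcal{A}$ accurately enough to identify the R-R diagram of $\alpha$ on $\partial H'$ and to verify that primitivity of $\alpha$ in $H'$ genuinely forces $a + b = 1$ rather than allowing some longer primitive word.
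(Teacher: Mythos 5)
Your outer strategy (cut $H[\alpha]$ along the vertical non-separating annulus $\mathcal{A}$, conclude $\alpha$ is primitive in $H/\mathcal{A}$, and settle the exceptional-fiber question by homology) is sound as far as it goes: the cut manifold is indeed $(H/\mathcal{A})[\alpha]$ and is a Seifert-fibered space over $D^2$ with at most one exceptional fiber, hence a solid torus, and your rank and torsion computations ruling out $(a,b)=(1,0)$ and ruling out an exceptional fiber once $\alpha=ABA^{-1}B$ are correct. But the step you yourself flag as the main obstacle is a genuine gap, and it is exactly the content of the lemma: you never show that primitivity of $\alpha$ in $H'=H/\mathcal{A}$ forces $a+b=1$. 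To run that argument you would need a complete set of cutting disks for $H'$, the word of $\alpha$ in $\pi_1(H')$ as a function of $a$ and $b$, and an application of Theorem~\ref{recognizing primitives and proper powers} or of a minimality argument to that word. Figure~\ref{SF_on_Mobius1b} cannot serve as the ``model picture'' here, because it is drawn only for the already-reduced case $(a,b)=(0,1)$, $s=1$; for general $(a,b)$ the position of $\mathcal{A}$ relative to the cutting disks of $H'$ and the resulting word of $\alpha$ are precisely what has to be worked out, and ``primitive in $H'$'' does not by itself exclude $\alpha$ being a longer primitive word.

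The paper closes this gap by a different device that bypasses any computation in $\pi_1(H/\mathcal{A})$: since $s=1$, the core $\lambda$ of the solid torus $V$ cut off by the separating annulus $\mathcal{A}'$ (bounded by two parallel copies of $\beta$) is a \emph{regular} fiber, and Dehn surgery on $\lambda$ replaces $H$ by a new genus two handlebody whose $2$-handle addition along $\alpha$ is Seifert-fibered over the M\"obius band with $\lambda$ now an exceptional fiber of index $s'>1$; moreover the R-R diagram of $\alpha$ in the new handlebody is literally Figure~\ref{SF_on_Mobius2} with $s$ replaced by $s'$. This simultaneously shows that $H[\alpha]$ had no exceptional fibers (the M\"obius-band fibration admits at most one) and reduces the determination of $(a,b)$ to Lemma~\ref{must have (a,b) = (0,1) when s > 1}, where the minimal-length argument via the graph of Figure~\ref{SF_on_Mobius3} does the real work. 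If you wish to keep your cut-along-$\mathcal{A}$ approach, you must supply the missing identification of $\alpha$ in $\pi_1(H/\mathcal{A})$; otherwise the surgery trick is the efficient route.
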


\begin{proof}
As in Lemma~\ref{must have (a,b) = (0,1) when s > 1}, the annulus $\mathcal{A}$ is vertical in the Seifert-fibration of $H[\alpha]$, and its boundary components $\beta$ and $\hat{\beta}$ are regular fibers. Then two parallel copies of $\beta$ bound a separating annulus $\mathcal{A}'$ in $H$ such that $\mathcal{A}'$ is saturated and vertical in the Seifert-fibration of $H[\alpha]$, and $\mathcal{A}'$ cuts $H$ into a solid torus $V$ and a genus two handlebody $W$, with $\alpha$ lying on $\partial W$.
Let $\lambda$ be the core of $V$. (Note that like the $s > 1$ case of Lemma~\ref{must have (a,b) = (0,1) when s > 1}, $\lambda$ is also a core of the B-handle of $H$, but unlike the $s > 1$ case, here $\mathcal{A}'$ is parallel into $\partial H$.)
Then, since $s = 1$, $\lambda$ is a regular fiber in the Seifert-fibration of $H[\alpha]$. By performing Dehn surgery on $\lambda$, one can change $H$ into another genus two handlebody $H'$ such that $H'[\alpha]$ is Seifert-fibered over the M\"obius band with $\lambda$ as an exceptional fiber of index $s' > 1$. And then, since $H'[\alpha]$ is Seifert-fibered over the M\"obius band, $\lambda$ must be its only exceptional fiber. This implies that the Seifert-fibration of $H[\alpha]$ had no exceptional fibers.

To finish, notice that the R-R diagram of $\alpha$ on $\partial H'$ is obtained from the diagram of $\alpha$ on $\partial H$ by replacing $s$ in Figure~\ref{SF_on_Mobius2} with $s'$. Now the argument used in Lemma~\ref{must have (a,b) = (0,1) when s > 1} applies and shows that, as before, $(a,b) = (0,1)$ and the diagram of Figure~\ref{SF_on_Mobius2} reduces to that of Figure~\ref{SF_on_Mobius1}.
\end{proof}

\begin{figure}[tbp]
\centering
\includegraphics[width = 0.5\textwidth]{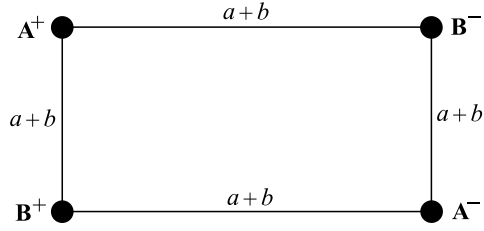}
\caption{If $s = 1$ in Figure~\ref{SF_on_Mobius2}, the Heegaard diagram under\-lying the R-R diagram in Figure~\ref{SF_on_Mobius2} has a graph with this form.}
\label{SF_on_Mobius3}
\end{figure}

%\clearpage

\end{document}